\documentclass[11pt,reqno]{amsart}

\usepackage{amsmath,amsfonts,amsthm,amscd,amssymb,graphicx,mathrsfs}
\usepackage{cite}
\usepackage[utf8]{inputenc}
\usepackage{amsbsy}
\usepackage{graphicx}
\usepackage{subcaption}
\usepackage[unicode=true,bookmarks=false,breaklinks=false,pdfborder={0 0 1},colorlinks=true]{hyperref}
\usepackage{bm}
\usepackage{cleveref}
\usepackage{tikz}
\usepackage{pgfplots}
\usetikzlibrary{arrows}
\usepackage[text={6.5in,9in},centering,includefoot,foot=0.6in]{geometry}

\usepackage[all]{xy}

\numberwithin{equation}{section}
\numberwithin{figure}{section}

\usepackage{amsmath,amsthm,amsfonts,amssymb,amscd}
\usepackage[all]{xy}
\usepackage{sseq}

\usepackage{setspace}
\setcounter{tocdepth}{3}
\usepackage{todonotes}

\makeatletter
\renewcommand\subsection{\@startsection{subsection}{2}%
  \z@{-.5\linespacing\@plus-.7\linespacing}{.5\linespacing}%
  {\normalfont\bfseries}}
\makeatother

\definecolor{skyblue}{rgb}{0.85,0.85,1}

\setlength{\parindent}{0.0in}

\newtheorem{theorem}{Theorem}[section]
\newtheorem{lemma}[theorem]{Lemma}
\newtheorem{prop}[theorem]{Proposition}
\newtheorem{cor}[theorem]{Corollary}

\theoremstyle{definition}

\newtheorem{define}[theorem]{Definition}

\theoremstyle{remark}
\newtheorem{rem}[theorem]{Remark}

\DeclareMathOperator{\dv}{div}

\DeclareMathOperator{\Hess}{Hess}
\DeclareMathOperator{\supp}{supp}
\DeclareMathOperator{\dom}{dom}
\DeclareMathOperator{\id}{id}

\DeclareMathOperator{\Diff}{Diff}

\DeclareMathOperator{\spn}{span}

\newcommand{\bbN}{\mathbb{N}}
\newcommand{\bbR}{\mathbb{R}}

\newcommand{\cD}{\mathcal{D}}

\newcommand{\cL}{\mathcal{L}}
\newcommand{\cP}{\mathcal{P}}
\newcommand{\cT}{\mathcal{T}}
\newcommand{\cU}{\mathcal{U}}
\newcommand{\cV}{\mathcal{V}}
\newcommand{\cW}{\mathcal{W}}

\newcommand{\pO}{\partial \Omega}

\newcommand{\lra}{\longrightarrow}
\newcommand{\p}{\partial}

\newcommand{\DP}{\Delta^{\!\p P}}

\newcommand{\Ds}{\mathcal{D}^s(N)} 
\newcommand{\DsM}{\mathcal{D}_M^s(N)} 
\newcommand{\EsM}{\mathcal{E}_P^s(N)}
\newcommand{\EsMstar}{\mathcal{E}_P^{s_*}(N)} 
\newcommand{\DsP}{\cD^s_P(N)}

\newcommand{\DtnNu}{\Lambda_{\scriptscriptstyle P,\nu}}

\newcommand{\Snu}{S_{\scriptscriptstyle P,\nu}}

\newcommand{\lap}{\lambda_{\scriptscriptstyle P}}
\newcommand{\form}{a_{_{P,\nu}}}

\newcommand{\CF}{C^\infty_0(M\backslash F; \Sigma) }
\newcommand{\wtH}{\widetilde H^{1/2}}

\allowdisplaybreaks

\begin{document}

\title{Stability of spectral partitions with corners}

\author{G. Berkolaiko}
\address{Department of
  Mathematics, Texas A\&M University, College Station, TX 77843-3368, USA}
\email{berko@math.tamu.edu}

\author{Y. Canzani}
\address{Department of Mathematics, University of North Carolina at Chapel Hill,
Phillips Hall, Chapel Hill, NC  27599, USA}
\email{canzani@email.unc.edu}

\author{G. Cox}
\address{Department of Mathematics and Statistics, Memorial University of Newfoundland, St. John's, NL A1C 5S7, Canada}
\email{gcox@mun.ca}

\author{P. Kuchment}
\address{Department of
  Mathematics, Texas A\&M University, College Station, TX 77843-3368, USA}
\email{kuchment@tamu.edu}

\author{J.L. Marzuola}
\address{Department of Mathematics, University of North Carolina at Chapel Hill,
Phillips Hall, Chapel Hill, NC  27599, USA}
\email{marzuola@math.unc.edu}

\begin{abstract}
A spectral minimal partition of a manifold is a decomposition into disjoint open sets that minimizes a spectral energy functional. While it is known that bipartite minimal partitions correspond to nodal partitions of Courant-sharp Laplacian eigenfunctions, the non-bipartite case is much more challenging. In this paper, we unify the bipartite and non-bipartite settings by defining a modified Laplacian operator and proving that the nodal partitions of its eigenfunctions are exactly the critical points of the spectral energy functional. Moreover, we prove that the Morse index of a critical point equals the nodal deficiency of the corresponding eigenfunction. Some striking consequences of our main result are: 1) in the bipartite case, every local minimum of the energy functional is in fact a \emph{global} minimum; 2) in the non-bipartite case, every local minimum of the energy functional minimizes within a certain topological class of partitions. Our results are valid for partitions with non-smooth boundaries; this introduces considerable technical challenges, which are overcome using delicate approximation arguments in the Sobolev space $H^{1/2}$.

\end{abstract}

\maketitle
\parskip=-0.5ex{\tableofcontents}
\parskip=1ex

\section{Introduction}

\subsection{Overview and main results}

For a compact, oriented manifold $M$, the \emph{spectral minimal
  $k$-partitions} are the minimizers of the functional
\begin{equation}\label{Lambda}
    \Lambda(P):=\max\limits_i \lambda_1(\Omega_i),
\end{equation}
over the \emph{partitions} $P = \{\Omega_i\}_{i=1}^k$ of $M$, namely
the collections of mutually disjoint, nonempty, open, connected
subsets $\Omega_i \subset M$.  Here $\lambda_1(\Omega_i)$ is the
lowest eigenvalue of the Dirichlet Laplacian on $\Omega_i$, and the
minimizers are sought for a given fixed $k$.  Existence of a minimizer
for any $k$ was established in \cite{ConTerVer_cvpde05}. It can be
easily seen that a minimizer must be an \emph{equipartition}:
$\lambda_1(\Omega_i) = \lambda_1(\Omega_1)$ for all $i$.

Spectral minimal partitions arise naturally 
in the study of free boundary variational problems \cite{ACF} 
and spatial segregation in the strong competition
limit of reaction--diffusion systems
\cite{ConTerVer_iumj05,ConTerVer_am05,ConTerVer_cvpde05}. They have also been used to
establish spectral gaps for ergodic Markov operators on
general measurable spaces \cite{miclo2015hyperboundedness}.  In the
wider context of optimal partitioning of manifolds and graphs,
spectral partitions have been used to approximate Cheeger cuts
\cite{osting2014minimal} and compute bounds on Cheeger constants \cite{lee2014multiway}, 
and have also been applied to problems in 
community detection \cite{beck2023uniform} and wavelet construction
\cite{beck2021nodal}. 
  Much of what is known about spectral minimal
partitions and their applications is summarized in the recent survey
\cite{bonnaillie2015nodal}.  We remark that finding spectral minimal
partitions both numerically and analytically is extremely challenging
\cite{bonnaillie2010numerical,Bogosel}.  A striking example of an open
problem is the spectral minimal 3-partition of the disk, conjectured
but not yet proven to be radial, as shown in \Cref{fig:MS}.

In this paper, we are primarily interested in the connection between
spectral minimal partitions and nodal properties of
Laplacian eigenfunctions, an area with rich history and much recent
activity (for a small subset of the work in this area, see \cite{beck2021nodal,beck2024nodal,lyons2023nodal,
  P56,DonFef_jga92,NazSod_ajm09,LogMal_otaa18,Log_am18,CanSar_cpam19,JunZel_aif20,GanMckMohSri_prep21,AloBanBer_em22}).
This connection was first discovered in the seminal
articles \cite{ConTerVer_cvpde05,HHOT} and further
developed in
\cite{BanBerRazSmi_cmp12,BKS12,helffer2013magnetic,BCCM2,BCCM3}.  In
particular, it turns out to be beneficial to alter the scope of the
problem, discussing \emph{all} critical points of the functional $\Lambda(P)$\,---\,as 
opposed to just the minima\,---\,while restricting the search
to the space of equipartitions.  It became increasingly clear that the
critical points are generated by the nodal sets of
Laplacian\footnote{When the partition is not bipartite (and thus
  cannot possibly be generated by the sign changes of a Laplacian
  eigenfunction), the Laplacian has to be suitably modified, as described 
  in \eqref{eq:PLap} and \Cref{sec:DeltaP}.} eigenfunctions, with the
stability of said critical points governed by the position of the
corresponding eigenvalue in the spectrum.  So far, this correspondence
has been established only for partitions with smooth internal
boundaries $\partial \Omega_i \backslash \partial M$.  In the present
work we finally remove the assumption of smoothness, establishing the
above correspondence for all critical points of $\Lambda$, including
partitions with triple intersection points (as in \Cref{fig:MS}), which are conjectured to be generic for large
$k$.  As a further benefit, whenever a given critical partition is
not minimal, our technique suggests ways to perturb the partition to
move towards a minimum.

To start introducing our results, we collect the relevant terminology
in the following definition. For the rest of the paper we focus on 
the two-dimensional case, so our results are valid for partitions of 
compact surfaces and planar domains.

\begin{define}
\label{def:regular}
A $k$-\emph{partition} of a compact, oriented, connected surface $M$ is a collection $P = \{\Omega_i\}_{i=1}^k$ of mutually
disjoint, nonempty, open, connected subsets $\Omega_i \subset M$. If each $\overline\Omega_i$ is a manifold with piecewise smooth
boundary, $\operatorname{int}\overline\Omega_i = \Omega_i$ and $M = \overline{\cup_i\Omega_i}$, then $P$ is a \emph{partition with corners}. If $\lambda_1(\Omega_i)$
does not depend on $i$, then $P$ is an \emph{equipartition}. A partition is \emph{bipartite} if one can assign to each $\Omega_i$
one of two colors so that any two neighbors have different colors.
\end{define}

\begin{figure}
\begin{tikzpicture}[ scale=0.8]
	\draw[thick,dashed] (2,0) arc[radius=2, start angle=0, end angle=360];
	\draw[very thick] (0,0) -- ({2*cos(90)},{2*sin(90)});
	\draw[very thick] (0,0) -- ({2*cos(210)},{2*sin(210)});
	\draw[very thick] (0,0) -- ({2*cos(330)},{2*sin(330)});
\end{tikzpicture}
\caption{The so-called ``Mercedes star" is conjectured to be the minimal 3-partition of the disk.}
\label{fig:MS}
\end{figure}
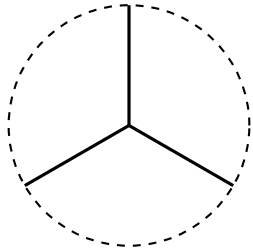

An important example is the \emph{nodal partition} of
an eigenfunction $\psi$ of the Dirichlet Laplacian on $M$. This is the partition whose subdomains are the connected components of the set $\{\psi \neq 0\}$.
Nodal partitions of eigenfunctions are bipartite equipartitions. If $\lambda$ denotes the 
eigenvalue for $\psi$, then each subdomain has $\lambda_1(\Omega_i) = \lambda$, therefore 
the partition energy is $\Lambda(P) = \lambda$. Defining the \emph{spectral position} to be $\ell(P) = \min\{ n : 
\lambda_n = \lambda\}$, where $\{\lambda_n\}_{n=1}^\infty$ are the eigenvalues of the Dirichlet Laplacian on $M$, 
we then define the \emph{nodal deficiency} 
\begin{equation}
	\delta(P) = \ell(P) - k,
\end{equation}
where $k$ 
is the number of components in $P$. Courant's nodal domain theorem states that $\delta(P) \geq 0$. We say that the eigenfunction $\psi$ is \emph{Courant sharp} if its nodal partition has $\delta(P) = 0$.

One of the difficulties in finding spectral minimal partitions is the
need to find the \emph{global} minimum.  A striking special case
of our much more general main result (Theorem \ref{thm:equality})
shows that for bipartite partitions it suffices to check that $P$
\emph{locally minimizes} $\Lambda$, in the sense that
\begin{equation}
  \Lambda(P)\leq \Lambda(\varphi(P))
\end{equation}
for all diffeomorphisms $\varphi$ close to identity (in a sense that will be made precise later).

\begin{theorem}
  \label{thm:bi}
  Let $P$ be a bipartite $k$-partition with corners. The following are
  equivalent:
  \begin{enumerate}
  \item $P$ is the nodal partition of a Courant-sharp Laplacian eigenfunction,
  \item $P$ is a minimal partition,
  \item $P$ locally minimizes $\Lambda$.
  \end{enumerate}
\end{theorem}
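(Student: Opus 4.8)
The plan is to establish the chain of implications $(1)\Rightarrow(2)\Rightarrow(3)\Rightarrow(1)$, leaning on the general correspondence theorem (Theorem \ref{thm:equality}) that identifies critical points of $\Lambda$ on the space of equipartitions with nodal partitions of eigenfunctions of the appropriate (possibly modified) Laplacian, and that equates the Morse index of such a critical point with the nodal deficiency $\delta(P)$.

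\medskip\noindent\textbf{Step 1: $(1)\Rightarrow(2)$.} Suppose $P$ is the nodal partition of a Courant-sharp eigenfunction $\psi$ with eigenvalue $\lambda$, so $\Lambda(P)=\lambda$ and $\delta(P)=\ell(P)-k=0$, i.e.\ $\lambda=\lambda_k$ where $\lambda_k$ is the $k$-th Dirichlet eigenvalue of $M$. We must show $\Lambda(P)\le\Lambda(P')$ for every $k$-partition $P'$. This is the classical argument of Helffer--Hoffmann-Ostenhof--Terracini: given any $k$-partition $P'=\{\Omega'_i\}$, let $u_i$ be the first Dirichlet eigenfunction on $\Omega'_i$, extended by zero to $M$; the $u_i$ have disjoint supports, so they span a $k$-dimensional subspace of $H^1(M)$ on which the Rayleigh quotient is bounded by $\max_i\lambda_1(\Omega'_i)=\Lambda(P')$. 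By the min--max principle, $\lambda_k\le\Lambda(P')$, hence $\Lambda(P)=\lambda_k\le\Lambda(P')$. (Since $P$ is bipartite, no modification of the Laplacian is needed, and the test functions are genuinely in $H^1$.)

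\medskip\noindent\textbf{Step 2: $(2)\Rightarrow(3)$.} This is immediate: a global minimum over all $k$-partitions is in particular a minimum over the subfamily $\{\varphi(P) : \varphi \text{ a diffeomorphism close to identity}\}$, since each $\varphi(P)$ is again a $k$-partition.

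\medskip\noindent\textbf{Step 3: $(3)\Rightarrow(1)$.} This is the substantive implication, and it is where Theorem \ref{thm:equality} does the heavy lifting. If $P$ locally minimizes $\Lambda$ among diffeomorphic deformations, then in particular $P$ is a critical point of $\Lambda$ restricted to equipartitions (a local minimum is a critical point; one should check that $P$ is automatically an equipartition, which follows because a non-equipartition can be decreased in energy by a deformation shrinking the domain carrying the maximal $\lambda_1$, contradicting local minimality). By Theorem \ref{thm:equality}, $P$ is therefore the nodal partition of an eigenfunction of the modified Laplacian $\Delta^P$; since $P$ is bipartite, this modified operator coincides with the ordinary Dirichlet Laplacian on $M$, so $P$ is the nodal partition of an honest Laplacian eigenfunction $\psi$ with some eigenvalue $\lambda$. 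Moreover, again by Theorem \ref{thm:equality}, the Morse index of $P$ as a critical point equals the nodal deficiency $\delta(P)\ge 0$. But a local minimum has Morse index $0$, forcing $\delta(P)=0$, i.e.\ $\psi$ is Courant sharp.

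\medskip\noindent\textbf{Main obstacle.} Steps 1 and 2 are standard. The entire weight of the theorem rests on invoking Theorem \ref{thm:equality} correctly in Step 3, and the only genuinely delicate point internal to this proof is the reduction from ``$P$ locally minimizes $\Lambda$ among \emph{all} nearby partitions'' to ``$P$ is a critical point of $\Lambda$ on the \emph{constrained} space of equipartitions with vanishing Morse index.'' One must verify that the deformations witnessing criticality and the second-variation (Morse index) computation in Theorem \ref{thm:equality} are of the same type as the diffeomorphic deformations in the statement, and that local minimality over this class indeed rules out any negative direction of the second variation; the fact that $P$ has corners is precisely what makes this non-trivial, but it is exactly the scenario Theorem \ref{thm:equality} is designed to handle, so here it can be cited directly.
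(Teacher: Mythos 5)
Your proposal is correct and follows essentially the same route as the paper: $(1)\Rightarrow(2)$ is the standard min--max argument (the paper simply cites \cite{HHOT}), $(2)\Rightarrow(3)$ is trivial, and $(3)\Rightarrow(1)$ deduces from \Cref{thm:equality BP} (equivalently, \Cref{thm:equality} together with the unitary equivalence of $-\DP$ with the Dirichlet Laplacian in the bipartite case) that a local minimizer is a critical partition with $\delta(P)=n_-(\Hess\lap)=0$, hence Courant sharp. One small correction to your parenthetical equipartition check: the energy-decreasing deformation must \emph{enlarge} the subdomains attaining the maximal $\lambda_1$ at the expense of non-maximal neighbours (shrinking a maximal subdomain raises its $\lambda_1$), and since several subdomains may attain the maximum, one pushes iteratively along the connected adjacency graph until the maximum strictly drops.
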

In particular, every bipartite local minimum of $\Lambda$ is in fact a global minimum. The significance of this result is that the hypothesis only involves comparing $P$ to partitions that are nearby (and in particular have the same topology), but the conclusion is that $P$ minimizes energy compared to \emph{all} $k$-partitions, regardless of proximity or topology; see \Cref{fig:square4} for an illustration.

The equivalence (1) $\Leftrightarrow$ (2) was shown in \cite{HHOT}, and the implication (2) $\Rightarrow$ (3) is trivial. In this paper we prove (3) $\Rightarrow$ (1), and as a consequence obtain a new proof that (2) $\Rightarrow$ (1), which is nontrivial.

In fact, our approach gives a much more general result than
\Cref{thm:bi}, which only dealt with minimal partitions. 
Given an equipartition $P$, we consider the set of diffeomorphisms $\varphi \colon M \to M$ such that $\varphi(P)$ is also an equipartition, and restrict the energy functional $\Lambda$ to it. That is, we work with the functional $\lap(\varphi) = \Lambda\big(\varphi(P)\big)$; see equation~\eqref{eq:Es_def} and \Cref{thm:submanifold} for precise definitions.
This is a smooth function, and thus
the notions of critical points (not just minima) and
their Hessians make sense. Starting with the bipartite case, we 
completely characterize the critical points of $\lap$ and their Morse 
indices, in terms of Laplacian eigenfunctions and their nodal deficiency.
We say that an equipartition $P$ is a \emph{critical partition for $\lap$} if $\varphi=\id$ is a critical point of $\lap$.

\begin{theorem}
\label{thm:equality BP}
A bipartite partition $P$ is a critical partition for $\lap$  if and only if it is the nodal partition of an eigenfunction of $-\Delta$, in which case it has Morse index $n_-(\Hess \lap) = \delta(P)$.
\end{theorem}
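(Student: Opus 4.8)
The plan is to set up a variational framework in which the energy $\lap(\varphi) = \Lambda(\varphi(P))$ becomes, after a suitable reduction, a function whose derivatives can be read off from Dirichlet-to-Neumann data on the partition boundary $\Gamma = \bigcup_i \partial\Omega_i \setminus \partial M$. First I would parametrize admissible deformations: a diffeomorphism $\varphi$ near the identity moves the interface $\Gamma$ to a nearby hypersurface $\Gamma_\varphi$, and since $P$ is an equipartition the constraint that $\varphi(P)$ is again an equipartition cuts out a submanifold of the diffeomorphism group (this is \Cref{thm:submanifold}). On that submanifold, $\lap(\varphi)$ is the common value of $\lambda_1$ on the subdomains of $\varphi(P)$. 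Because only the normal component of the deformation vector field at $\Gamma$ matters, the problem reduces to analyzing a scalar function of a normal displacement $v \colon \Gamma \to \bbR$; in the non-smooth setting this displacement lives in $\widetilde H^{1/2}$, which is exactly where the $H^{1/2}$ approximation arguments advertised in the abstract will be needed.

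Next I would compute the first and second variation of $\lap$ in terms of Hadamard-type shape-derivative formulas. For a single normal perturbation of the interface, the first variation of $\lambda_1(\Omega_i)$ is $-\int_{\Gamma \cap \partial\Omega_i} |\partial_\nu u_i|^2 v$, where $u_i$ is the first Dirichlet eigenfunction on $\Omega_i$ normalized in $L^2$. At a critical partition, the equipartition constraint forces these normal-derivative data to match up across $\Gamma$ so that the glued function $\psi$, defined by $\psi|_{\Omega_i} = \pm c_i u_i$ with the bipartite two-coloring supplying the signs and the $c_i$ chosen to make $\psi$ genuinely $C^1$ (hence a weak, and then by elliptic regularity a strong, solution of $-\Delta\psi = \lap \psi$), is a global Laplacian eigenfunction with nodal set $\Gamma$. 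Conversely, if $P$ is the nodal partition of an eigenfunction then the Hadamard formula shows all first variations vanish, so $P$ is critical. This establishes the ``if and only if'' part.

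For the Morse index, the key step is to identify the Hessian quadratic form $\Hess\lap[v,v]$ with a quadratic form built from the Dirichlet-to-Neumann operators $\Dtn$ of the subdomains. The standard computation gives $\Hess\lap[v,v] = \langle (\DG + \text{(curvature and DtN terms)}) v, v\rangle$ restricted to the codimension-$(k-1)$ subspace enforcing the linearized equipartition constraint; after simplification at a critical point this should become (up to sign and a positive factor) the difference between a boundary form and the form $\langle \Dtn v, v\rangle$, whose negative eigenvalues count exactly the ``extra'' Dirichlet eigenvalues below $\lap$ on $M$ — i.e. $\ell(P) - k = \delta(P)$. Concretely, I would relate the number of negative directions of $\Hess\lap$ to $n_-$ of the operator $\sum_i \DtnNu^{(i)}$ on $\Gamma$, and then invoke the spectral flow / Dirichlet-Neumann bracketing identity that equates that count with the number of eigenvalues of $-\Delta$ on all of $M$ strictly below $\lap$ minus the number of nodal domains. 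This last identity is essentially the Berkolaiko–Kuchment–Smilansky nodal deficiency formula, extended here to the non-smooth case.

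The main obstacle I anticipate is purely analytic: making every one of the above shape-derivative computations rigorous when $\Gamma$ has corners. The eigenfunctions $u_i$ have at most $H^{3/2}$-type regularity near the vertices, so $|\partial_\nu u_i|^2$ is not obviously integrable against an arbitrary $H^{1/2}$ displacement, the trace and DtN operators must be understood on $\widetilde H^{1/2}(\Gamma)$ rather than $H^{1/2}$ of a closed manifold, and the gluing/elliptic-regularity step that upgrades the piecewise eigenfunction to a true eigenfunction must be checked across the corner points. I expect the resolution to be a density argument: establish all identities first for smooth perturbations supported away from the corners, prove the relevant quadratic forms are bounded and closable on $\widetilde H^{1/2}$, and then pass to the limit — this is presumably where ``delicate approximation arguments in the Sobolev space $H^{1/2}$'' enter, and where the bulk of the technical work will lie.
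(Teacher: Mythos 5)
Your proposal follows essentially the same route as the paper: criticality via Hadamard's formula and gluing to an eigenfunction, identification of the Hessian at a critical partition with the (two-sided) Dirichlet-to-Neumann quadratic form on the constrained subspace, quoting the known identity $n_-(\DtnNu)=\delta(P)$, and equating Morse indices through a density argument in $\wtH(\Sigma)$ using deformations supported away from the corners. This matches the paper's structure (\Cref{thm:submanifold,thm:critical,thm:Hess1,thm:BCHS,thm:index,thm:dense}), including the anticipated corner-regularity obstacle and its resolution by $H^{1/2}$ approximation.
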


Here, the \emph{Morse index}, $n_-(\Hess \lap)$, is the maximal dimension
of a subspace on which the symmetric bilinear form $\Hess\lap$ is negative definite.

A special case of \Cref{thm:equality} was obtained previously in \cite[Theorem~11]{BKS12}, for smooth, bipartite equipartitions corresponding to simple eigenfunctions of the Laplacian. To the best of our knowledge, this was the first explicit formula obtained for the nodal deficiency. Another formula was given in \cite{CJM2} in terms of a two-sided Dirichlet-to-Neumann map defined on the nodal set; see \Cref{thm:BCHS}. The present work arose from our desire to unify these two approaches to nodal deficiency, and in particular, to understand the relationship between the Hessian of $\lap$ and the two-sided Dirichlet-to-Neumann map. This relationship is made explicit in \Cref{thm:Hess1}, which is one of the main ingredients in the proof of \Cref{thm:equality BP} and its non-bipartite generalization, \Cref{thm:equality}.

The general (not necessarily bipartite) case is much more subtle and requires additional terminology. Suppose $P$ is a $k$-partition of $M$ with \emph{boundary set} $\p P = \cup_i \overline{\pO_i\backslash\p M}$. We say that a set $\Gamma$ is \emph{homologous to $\p P$} if the closure of the symmetric difference, $\overline{\p P \triangle \Gamma}$, is the boundary set of a bipartite partition. We then define 
\begin{equation}
\label{Pkdef}
	\cP_k(P) = \big\{ \tilde P : \tilde P \text{ is a $k$-partition and }\p \tilde P \text{ contains a subset homologous to } \p P \big\}.
\end{equation}
For instance, if $P$ is the 3-partition of the disk shown in \Cref{fig:MS}, then $\cP_3(P)$ consists of all 3-partitions whose boundary set contains a curve from the origin to the boundary of the disk; see \Cref{ssec:hom} and \cite{BCCM3} for details.

We also define an operator $-\DP$, called the \emph{partition Laplacian}, that acts as the Laplacian on the space of functions satisfying Dirichlet boundary conditions on $\p M$ and anti-continuity conditions on the boundary set of $P$. That is, the restrictions $u_i = u\big|_{\Omega_i}$ satisfy
\begin{equation}
\label{eq:PLap}
	u_i\big|_{\pO_i \cap \pO_j} = - u_j\big|_{\pO_i \cap \pO_j}, \qquad \frac{\p u_i}{\p \nu_i}\bigg|_{\pO_i \cap \pO_j} = \frac{\p u_j}{\p \nu_j}\bigg|_{\pO_i \cap \pO_j}
\end{equation}
whenever $\pO_i$ intersects $\pO_j$ for some $i \neq j$. The partition Laplacian is unitarily equivalent to the Laplacian when $P$ is bipartite; see \Cref{sec:DeltaP} for details.
The non-bipartite generalization of Theorem \ref{thm:bi} can now be stated as follows.

\begin{theorem}
\label{thm:nonbi}
Let $P$ be a $k$-partition with corners. The following are equivalent:
\begin{enumerate}
	\item $P$ is the nodal partition of a Courant sharp eigenfunction for $-\DP$,
	\item $\Lambda(P) = \inf \{\Lambda(\tilde P) :  \tilde P \in \cP_k(P) \}$,
	\item $P$ is a critical partition for $\lap$ with $n_-(\Hess\lap) = 0$.
\end{enumerate}
\end{theorem}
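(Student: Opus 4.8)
The plan is to deduce the theorem from the master result \Cref{thm:equality} together with a Courant-sharp comparison argument for the partition Laplacian. Since \Cref{thm:equality} already identifies the critical partitions of $\lap$ with the nodal partitions of eigenfunctions of $-\DP$ and says that in that case $n_-(\Hess\lap) = \delta(P)$, and since such an eigenfunction is Courant sharp precisely when $\delta(P) = 0$, the equivalence (1) $\Leftrightarrow$ (3) comes for free. It therefore remains to incorporate (2), which I would do by closing the cycle with the implications (1) $\Rightarrow$ (2) and (2) $\Rightarrow$ (3).

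For (2) $\Rightarrow$ (3): assume $\Lambda(P) = \inf\{\Lambda(\tilde P) : \tilde P \in \cP_k(P)\}$. Any diffeomorphism $\varphi$ sufficiently close to the identity is isotopic to it, so $\p(\varphi(P)) = \varphi(\p P)$ is homologous to $\p P$ in the sense of \Cref{ssec:hom}; hence whenever $\varphi(P)$ is an equipartition it lies in $\cP_k(P)$, and $\lap(\varphi) = \Lambda(\varphi(P)) \ge \Lambda(P) = \lap(\id)$. Thus $\id$ is a local minimum of the smooth function $\lap$, which forces $\id$ to be a critical point (so $P$ is a critical partition) and $\Hess\lap$ to be positive semidefinite there, i.e.\ $n_-(\Hess\lap) = 0$.

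For (1) $\Rightarrow$ (2) -- the real content -- I would run a min-max argument in the form domain of $-\DP$, mirroring the classical Courant-sharp argument of \cite{HHOT} and its non-bipartite extension in \cite{BCCM3}. Let $\psi$ be a Courant-sharp eigenfunction of $-\DP$ with eigenvalue $\lambda$ and nodal partition $P = \{\Omega_i\}_{i=1}^k$; by the basic properties of $-\DP$ (see \Cref{sec:DeltaP}) the restriction of $\psi$ to each $\Omega_i$ is a sign-definite Dirichlet eigenfunction, so $\lambda_1(\Omega_i) = \lambda$, hence $\Lambda(P) = \lambda$, and Courant sharpness says $\ell(P) = k$, i.e.\ $\lambda = \lambda_k(-\DP)$. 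Now take an arbitrary $\tilde P = \{\tilde\Omega_i\}_{i=1}^k \in \cP_k(P)$ and pick $\Gamma \subseteq \p\tilde P$ homologous to $\p P$. The key structural input, supplied by the gauge construction in \Cref{sec:DeltaP}, is that multiplication by the $\pm1$-valued, locally constant function whose jump set is the bipartite boundary set $\overline{\p P \triangle \Gamma}$ conjugates $-\DP$ to $-\DG$; in particular $\lambda_k(-\DG) = \lambda_k(-\DP)$. Let $u_i$ be the first Dirichlet eigenfunction on $\tilde\Omega_i$, extended by zero. Since every $u_i$ vanishes on all of $\p\tilde\Omega_i \supseteq \p\tilde P \supseteq \Gamma$, an arbitrary combination $v = \sum_i c_i u_i$ has both boundary traces equal to zero along $\Gamma$, so it trivially satisfies the anti-continuity condition there (and the continuity condition along $\p\tilde P \setminus \Gamma$); a routine gluing argument places $v$ in $H^1$ away from $\Gamma$, so $v$ lies in the form domain of $-\DG$. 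As the $u_i$ have essentially disjoint supports, $\spn\{u_1,\dots,u_k\}$ is $k$-dimensional, and the Dirichlet-energy Rayleigh quotient of $v$ is a convex combination of the numbers $\lambda_1(\tilde\Omega_i)$, hence at most $\Lambda(\tilde P)$. The min-max principle for $-\DG$ then yields $\Lambda(P) = \lambda_k(-\DP) = \lambda_k(-\DG) \le \Lambda(\tilde P)$. Since also $P \in \cP_k(P)$, this is precisely statement (2).

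The main obstacle is the step in (1) $\Rightarrow$ (2) where an arbitrary competitor $\tilde P$ must be turned into an admissible trial space for $-\DP$: this is exactly where the homology constraint defining $\cP_k(P)$ is used, permitting $-\DP$ to be replaced by the unitarily equivalent $-\DG$ with jump set $\Gamma$ \emph{contained in} $\p\tilde P$, after which the first eigenfunctions on the $\tilde\Omega_i$ become automatically admissible; without this constraint the statement is false, since a non-bipartite critical $P$ need not minimize $\Lambda$ among all $k$-partitions. A secondary technical point, to be handled with the $H^{1/2}$-approximation machinery used elsewhere in the paper, is checking that the glued trial functions genuinely lie in $H^1$ across the corners of $\tilde P$ and the free endpoints of $\Gamma$ -- the place where non-smoothness intrudes -- though this is mitigated here by the fact that the trial functions vanish on the entire boundary set of $\tilde P$. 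The deepest input of all, of course, is \Cref{thm:equality} itself, which supplies the nontrivial implication (3) $\Rightarrow$ (1) and whose proof rests on the relation between $\Hess\lap$ and the two-sided Dirichlet-to-Neumann map.
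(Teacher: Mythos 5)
Your treatment of (1) $\Leftrightarrow$ (3) via \Cref{thm:equality} is exactly the paper's route, and your (1) $\Rightarrow$ (2) is essentially the min-max/gauge argument of \cite{BCCM3}, which the paper simply cites as Theorem~1.6 there; both of these are fine.

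The genuine gap is in your proof of (2) $\Rightarrow$ (3), and it sits precisely at the point the paper flags as delicate. You claim that any diffeomorphism $\varphi$ sufficiently close to the identity, being isotopic to it, yields $\varphi(\p P)$ homologous to $\p P$, so that (2) makes $\id$ a local minimum of $\lap$ on a full neighborhood in $\EsM$. This is false in the non-bipartite case: homology here means that $\overline{\p P \triangle \varphi(\p P)}$ is the boundary set of a \emph{bipartite} partition, and if $\varphi$ moves an interior corner point this generally fails. For the Mercedes-star $3$-partition of the disk, a $\varphi$ that displaces the central triple point produces a symmetric difference whose closure contains two interior vertices of degree three; the adjacent faces around an odd-degree vertex cannot be two-colored, so the symmetric difference is not a bipartite boundary set and $\varphi(P) \notin \cP_3(P)$. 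This is exactly why the paper states that $\{\varphi \in \DsM : \varphi(P) \in \cP_k(P)\}$ does \emph{not} contain a neighborhood of the identity. Consequently, hypothesis (2) only gives $\lap(\varphi) \geq \lap(\id)$ along deformations that fix the interior corner points (\Cref{def:hom}), i.e.\ first- and second-order nonnegativity only for $X \in \cT_0^\infty$. To upgrade this to criticality and $n_-(\Hess\lap) = 0$ on all of $T_{\id}\EsM$, the paper combines the restricted information with the density theorem \Cref{thm:dense} (any $\rho(X\cdot\nu)$ is approximated in $\wtH(\Sigma)$ by $\rho(X_n\cdot\nu)$ with $X_n$ smooth and supported away from the corners), the Hessian formula \Cref{thm:Hess1}, and the continuity bound \eqref{abound} for $\form$; this is \Cref{prop:Hessmin}. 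Your argument skips this entire mechanism, and without it the implication (2) $\Rightarrow$ (3) is unproven in exactly the non-bipartite situation the theorem is about (for bipartite $P$ your shortcut is harmless, since then every nearby image partition is again bipartite and hence null-homologous).
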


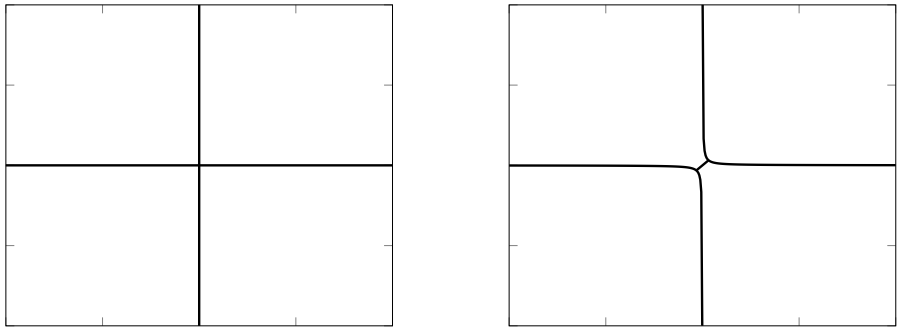
\begin{figure}[t]
\begin{center}
\begin{tikzpicture}[scale=0.75]
\begin{axis}[xmin=-1,ymin=-1,xmax=1,ymax=1, samples=50, yticklabels=\empty, xticklabels=\empty]
	\addplot[domain=-1:1, very thick] (x,0);
	\addplot[domain=-1:1, very thick] (0,x);
\end{axis}
\end{tikzpicture}
\hspace{1cm}
\begin{tikzpicture}[scale=0.75]
\begin{axis}[xmin=-1, ymin=-1, xmax=1, ymax=1, samples=200, yticklabels=\empty, xticklabels=\empty]
	\addplot[domain=-1:-0.001, very thick] (x, 0.001/x);
	\addplot[domain=0.001:1, very thick] (x, 0.001/x);
	\addplot[domain=-.03:0.03, very thick] (x,x);
\end{axis}
\end{tikzpicture}
\caption{$P$, the partition of the square on the left, locally minimizes $\Lambda$. The partition on the right, $\tilde P$, is not included in this local comparison, as it is not of the form $\varphi(P)$ for any diffeomorphism $\varphi$ and hence is not close to $P$. Nonetheless, \Cref{thm:bi} guarantees that $\Lambda(P) \leq \Lambda(\tilde P)$.}
\label{fig:square4}
\end{center}
\end{figure}

The proof that (2) $\Rightarrow$ (3) is more delicate in the non-bipartite case, since (2) just says that $P$ is minimal within the set $\cP_k(P)$, which does not contain an open neighborhood of $P$. We will resolve this with a technical approximation result, \Cref{thm:dense}, that allows us to calculate the Morse index of $\Hess\lap$ just using deformations that fix a neighborhood of each corner point.

In contrast to \Cref{thm:bi}, we are not able to conclude that a
locally minimal (non-bipartite) partition is a global minimum.  In
fact, numerical examples show that there exist local minima which are
not global.  However, we conjecture that locally minimal
$k$-partitions $P_1$ and $P_2$ with $\Lambda(P_1) \neq \Lambda(P_2)$
necessarily have different topology.

Using again the restriction $\lap$ of the energy functional $\Lambda$ to the set of
\emph{equipartitions}, we are able to obtain a not-necessarily-bipartite analog of Theorem \ref{thm:equality BP}, to characterize the critical points and compute their Morse indices in terms of the \emph{deficiency}
\begin{equation}
\label{def:delta}
	\delta(P) = n_-\big(-\DP - \Lambda(P)\big) + 1 - k,
\end{equation}
which is nonnegative by \cite[Theorem~1.7]{BCHS}. If $\delta(P) = 0$, then $P$ is said to be \emph{Courant sharp}. An equivalent formula for $\delta(P)$ will be given in \Cref{rem:delta}.

\begin{theorem}
\label{thm:equality}
$P$ is a critical partition for $\lap$ if and only if it is the nodal partition of an eigenfunction of $-\DP$, in which case it has Morse index $n_-(\Hess \lap) = \delta(P)$.
\end{theorem}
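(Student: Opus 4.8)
The plan is to reduce the general (non-bipartite) statement to the bipartite case handled in Theorem \ref{thm:equality BP}, using the unitary equivalence between $-\DP$ and an honest Laplacian on a suitable cover, together with the structural information already encoded in the partition Laplacian. The starting point is that $-\DP$ is self-adjoint with discrete spectrum, and that near any corner point of $\p P$ the anti-continuity conditions \eqref{eq:PLap} can be locally trivialized: passing to a branched double cover $\widehat M \to M$ ramified over the odd-valence vertices of $\p P$ straightens the sign flip, so that $-\DP$ becomes (unitarily equivalent to) the Dirichlet Laplacian on $\widehat M$ restricted to the eigenspace odd under the deck transformation. This is the content of Section \ref{sec:DeltaP}, which I would invoke directly. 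Under this identification, a nodal partition of an eigenfunction of $-\DP$ pulls back to (half of) a genuine nodal partition upstairs, and the energy $\lap$, being defined purely in terms of $\lambda_1$ of the subdomains, is unchanged by the covering; the equipartition constraint likewise pulls back. Thus the smooth manifold of equipartitions near $P$, the functional $\lap$, and its Hessian all have faithful lifts to the bipartite setting on $\widehat M$.

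With this lift in place, the ``if and only if'' part is immediate: $P$ is a critical partition for $\lap$ downstairs precisely when its lift $\widehat P$ is a critical partition for the corresponding restricted energy on $\widehat M$, which by Theorem \ref{thm:equality BP} happens exactly when $\widehat P$ is a nodal partition of a Laplacian eigenfunction on $\widehat M$ lying in the odd eigenspace — equivalently, when $P$ is the nodal partition of an eigenfunction of $-\DP$. For the index count, the key algebraic input is Theorem \ref{thm:Hess1}, which expresses $\Hess\lap$ in terms of the two-sided Dirichlet-to-Neumann operator $\Dtn$ on $\p P$; combined with the min-max characterization of eigenvalues, the negative index of $\Dtn$ (equivalently, of $-\DP - \Lambda(P)$ below the cutoff determined by the number of subdomains) equals $n_-\big(-\DP-\Lambda(P)\big)+1-k = \delta(P)$. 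Concretely, I would first establish $n_-(\Hess\lap) = n_-(\Dtn)$ via Theorem \ref{thm:Hess1}, then identify $n_-(\Dtn)$ with the spectral-counting quantity in \eqref{def:delta} by a Dirichlet--Neumann bracketing / Weyl-law-free eigenvalue interlacing argument exactly as in the proof of Theorem \ref{thm:equality BP}, which already does this for the bipartite model. The nonnegativity $\delta(P)\geq 0$ is quoted from \cite[Theorem~1.7]{BCHS}, so no Courant-type argument needs to be reproved.

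The main obstacle is not the algebra but the analysis at the corners: the manifold-of-equipartitions structure (Theorem \ref{thm:submanifold}), the smoothness of $\lap$, and the formula in Theorem \ref{thm:Hess1} must all be valid for partitions whose internal boundary $\p P$ has finitely many singular points, and the Hessian computation involves traces of $H^1$ functions on $\p P$, which only live in $H^{1/2}$ and must be controlled near the corners. This is where the technical approximation result Theorem \ref{thm:dense} enters: it guarantees that the Morse index of $\Hess\lap$ can be computed using only deformations supported away from a neighborhood of each corner point, so that on the relevant test subspace all the integrations by parts and trace identities used to derive the Dirichlet-to-Neumann formula are justified by the smooth theory. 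I would therefore structure the proof as: (i) lift to $\widehat M$ and reduce to the bipartite statement for the existence/characterization half; (ii) invoke Theorem \ref{thm:Hess1} to get $n_-(\Hess\lap)=n_-(\Dtn)$, using Theorem \ref{thm:dense} to restrict to corner-avoiding deformations where that identity is clean; (iii) match $n_-(\Dtn)$ with $\delta(P)$ by the same eigenvalue-interlacing bookkeeping as in Theorem \ref{thm:equality BP}. Steps (ii)–(iii) are essentially the bipartite proof run in the new setting; the genuinely new work is checking that the corner approximation of Theorem \ref{thm:dense} is compatible with the Hessian formula, which I expect to be the crux.
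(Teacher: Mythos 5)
The Morse-index half of your plan (steps (ii)--(iii)) is essentially the paper's argument, with one bookkeeping correction: after \Cref{thm:Hess1} and the density result \Cref{thm:dense} give $n_-(\Hess\lap)=n_-(\DtnNu)$ (this is exactly \Cref{thm:index}), the identification $n_-(\DtnNu)=\delta(P)$ is not something you re-derive by Dirichlet--Neumann bracketing or interlacing ``as in the proof of \Cref{thm:equality BP}''---there is no independent proof of the bipartite theorem to imitate (in this paper it is deduced from the general one, not the other way around), and the identity is precisely \Cref{thm:BCHS}, quoted from \cite[Theorem~1.7]{BCHS}; that reference supplies the full equality, not merely the nonnegativity $\delta(P)\geq 0$.

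The genuine gap is step (i). The branched-double-cover trivialization of the anti-continuity conditions is not ``the content of \Cref{sec:DeltaP}'': that section only defines $-\DP$ and records (\Cref{prop:hom}) that it is unitarily equivalent to the Dirichlet Laplacian on $M$ when $\p P$ is null-homologous, i.e.\ in the bipartite case; no covering construction appears in the paper. Even if you build the cover yourself, you cannot then invoke \Cref{thm:equality BP}: that theorem concerns the honest Dirichlet Laplacian on a surface, whereas your lifted operator is the Laplacian on a ramified cover with conical points, restricted to the deck-anti-invariant subspace. The spectral position and Courant-sharp counting would have to be done inside that subspace, the lift of $P$ is a $2k$-partition subject to a $\bbZ_2$-equivariance constraint (so \Cref{thm:submanifold} and the Hessian formula would need equivariant versions), and none of this is provided---so the ``reduction'' amounts to re-proving the general theorem, and is in any case circular relative to the paper's logic. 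The fix is much simpler and needs no cover: the characterization half is an immediate consequence of \Cref{thm:critical}. The criticality condition \eqref{eq:normals} says exactly that suitably signed multiples $a_i\psi_i$ of the groundstates satisfy the matching conditions \eqref{eq:PLap} across each interface, hence glue to an eigenfunction of $-\DP$ with eigenvalue $\Lambda(P)$ whose nodal partition is $P$; conversely, restricting a $-\DP$ eigenfunction to the subdomains produces such constants $a_i$. Your instinct that the corners are the crux is correct, but the delicacy lives in \Cref{thm:dense} and its use in \Cref{thm:index} (and in the boundary-term cancellation of \Cref{prop:Hess}), not in any covering-space reduction.
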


We therefore get information about
saddle points of the energy functional, in addition to local minima. We will use this in
\Cref{sec:examples} to construct energy-decreasing perturbations of
non-minimal partitions of the disk and rectangle.

\begin{rem}
\label{rem:delta}
If $P$ is a critical (and hence nodal) partition, we can define its spectral position $\ell(P) = \min\{n : \lambda_n(\p P) = \Lambda(P)\}$, where $\{\lambda_n(\p P)\}_{n=1}^\infty$ denote the eigenvalues of $-\DP$, written in increasing order and repeated according to their multiplicity. It follows that $\ell(P) = n_-\big(-\DP - \Lambda(P)\big) + 1$, so we can equivalently write the deficiency as $\delta(P) = \ell(P) - k$, which is the definition give in \cite{BCHS}.
\end{rem}

In the remainder of the introduction, we give precise definitions and describe the ideas in the proof of \Cref{thm:equality}. Along the way we highlight the relevant literature, explaining the developments that led to the above theorems.

\subsection{Definitions and further results}
\label{sec:further}
If $M$ has a nonempty boundary, it is convenient to assume that it is contained in a larger manifold, which we can take to be closed (compact and without boundary). We therefore let $(N,g)$ be a closed, oriented Riemannian surface and $M \subset N$ a compact subset with piecewise smooth (or empty) boundary.

It was shown in \cite{BKS12} that the space of generic partitions, which by definition do not have corners, has the structure of a smooth manifold. When the partitions are allowed to have corners, it is easier to consider $\Lambda$ as a function on the space of diffeomorphisms, mapping $\varphi \mapsto \Lambda(\varphi(P))$, where $P = \{\Omega_i\}$ is a fixed ``reference partition" and $\varphi(P)$ denotes the partition $\{\varphi(\Omega_i)\}$. This simplifies the analysis at the expense of introducing some degeneracy into the energy functional, since there are many nontrivial diffeomorphisms that map $P$ to itself and hence do not change the energy. In terms of the Hessian of the energy functional, this degeneracy affects the nullity but not the Morse index; see \Cref{sec:null} for further discussion.

Instead of working with diffeomorphisms on $M$, which may have corners, we consider diffeomorphisms on the ambient space $N$ that leave $M$ invariant. Let $\Ds$ be the set of $H^s$ diffeomorphisms on $N$ for some $s>2$. This is a smooth ($C^\infty$) Hilbert manifold, as we recall in \Cref{sec:manifold}, and the tangent space at the identity is the set $\mathfrak{X}^{s}(N)$ of $H^s$ vector fields on $N$. We then define
\begin{equation}
	\DsM = \big\{ \varphi \in \Ds : \varphi(M) = M \big\}.
\end{equation}
In \Cref{ssec:submanifold} we will show that $\DsM$ is a smooth submanifold of $\Ds$, with $T_{\id} \DsM$ consisting of all vector fields in $\mathfrak{X}^{s}(N)$ that are tangent to the smooth part of $\p M$.

The map $\varphi \mapsto \Lambda(\varphi(P))$ is not a smooth function on this manifold\,---\,while the individual eigenvalues $\lambda_1(\varphi(\Omega_i))$ vary smoothly when the partition is deformed, their maximum is only Lipschitz. Fortunately, it is known that a minimal partition must be an \emph{equipartition}. We thus assume that our reference partition $P$ is an equipartition and define
\begin{equation}
  \label{eq:Es_def}
	\EsM = \big\{ \varphi \in \DsM : \varphi(P) \text{ is an equipartition of $M$} \big\}.
\end{equation}
The following result says that $\EsM$ is a convenient setting for our variational analysis.

\begin{theorem}
\label{thm:submanifold}
If $P$ is an equipartition with corners and $s>2$, then $\EsM \subset \DsM$ is a smooth submanifold, with
\[	T_{\id} \EsM = \left\{ X \in T_{\id} \DsM : \int_{\pO_1} (X \cdot \nu_1) \left(\frac{\p \psi_1}{\p \nu_1}\right)^2 d\mu = 
	\cdots = \int_{\pO_k} (X \cdot \nu_k)\left(\frac{\p \psi_k}{\p \nu_k}\right)^2 d\mu \right\},
\]
where $\psi_i$ is the $L^2(\Omega_i)$-normalized groundstate on $\Omega_i$ and $\nu_i$ is the outward unit normal. Moreover, the function
\begin{equation}
\label{def:lambda}
	\lap \colon \EsM  \lra \bbR, \qquad \lap(\varphi) = \Lambda\big(\varphi(P)\big)
\end{equation}
is smooth.
\end{theorem}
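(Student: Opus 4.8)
The plan is to realize $\EsM$ as the zero set of a submersion and then apply the implicit function theorem on Hilbert manifolds. First I would show that the map
\[
	\Phi \colon \DsM \lra \bbR^{k-1}, \qquad
	\Phi(\varphi) = \big(\lambda_1(\varphi(\Omega_1)) - \lambda_1(\varphi(\Omega_2)),\ \ldots,\ \lambda_1(\varphi(\Omega_{k-1})) - \lambda_1(\varphi(\Omega_k))\big)
\]
is smooth on a neighborhood of the identity. The key input here is that $\lambda_1(\varphi(\Omega_i))$ is a simple eigenvalue (it is the ground-state eigenvalue of the Dirichlet Laplacian on a connected domain, hence simple with a positive eigenfunction), so by standard analytic perturbation theory it varies smoothly with $\varphi$; one must check that the dependence is smooth in the $H^s$ topology on $\Ds$, which follows by pulling back the Laplacian on $\varphi(\Omega_i)$ to a $\varphi$-dependent elliptic operator on the fixed domain $\Omega_i$ with coefficients depending smoothly (in fact analytically) on $\varphi$, and invoking the smoothness of simple eigenvalues of such families. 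Since $P$ is an equipartition, $\Phi(\id) = 0$, and $\EsM = \Phi^{-1}(0)$ near the identity.

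Next I would compute the differential $d\Phi_{\id}$ and show it is surjective with split kernel. The core computation is the Hadamard-type shape derivative formula: for $X \in T_{\id}\DsM$, deforming $\Omega_i$ by the flow of $X$ changes the Dirichlet ground-state eigenvalue by
\[
	\frac{d}{dt}\bigg|_{t=0} \lambda_1\big(\varphi_t(\Omega_i)\big) = -\int_{\pO_i} (X \cdot \nu_i)\left(\frac{\p \psi_i}{\p \nu_i}\right)^2 d\mu,
\]
using that $\psi_i$ is $L^2$-normalized and vanishes on $\pO_i$; the boundary integral is over the piecewise smooth boundary, and the corners contribute nothing because $\psi_i \in H^1$ with the singular part of $\nabla\psi_i$ near a corner being too weak to affect the integral (this is where the piecewise-smoothness and the $H^{1/2}$-type regularity theory enters, and is the technical heart of the matter). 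Granting this, $d\Phi_{\id}(X)$ has components $\ell_{i+1}(X) - \ell_i(X)$ where $\ell_i(X) := \int_{\pO_i}(X\cdot\nu_i)(\p_{\nu_i}\psi_i)^2\,d\mu$, so the kernel is exactly the set displayed in the statement. Surjectivity amounts to showing the linear functionals $\ell_1,\ldots,\ell_k$ are "independent enough": since each internal boundary piece $\pO_i \cap \pO_j$ appears in exactly $\ell_i$ (with $\nu_i$) and $\ell_j$ (with $\nu_j = -\nu_i$), while each $\Omega_i$ also has boundary pieces one can deform freely, one can construct, for any target vector in $\bbR^{k-1}$, a vector field $X$ tangent to $\p M$ realizing it — e.g.\ by choosing $X$ supported near the smooth interior of a single $\pO_i$, using that $(\p_{\nu_i}\psi_i)^2 > 0$ there by the Hopf lemma. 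The kernel is closed and finite-codimensional, hence splits, so the implicit function theorem gives that $\EsM$ is a smooth submanifold with the asserted tangent space.

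Finally, for the smoothness of $\lap$: on $\EsM$ all the eigenvalues $\lambda_1(\varphi(\Omega_i))$ agree, so $\lap(\varphi) = \Lambda(\varphi(P)) = \lambda_1(\varphi(\Omega_1))$ coincides with a single branch of a simple eigenvalue, which is smooth on $\DsM$ by the perturbation-theory argument above, hence smooth on the submanifold $\EsM$. The offending non-smoothness of $\Lambda$ — that it is only a maximum of the $\lambda_1(\Omega_i)$ and so merely Lipschitz — disappears precisely because we have restricted to the locus where the maximum is achieved simultaneously by all of them.

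I expect the main obstacle to be the shape-derivative formula and its kernel/surjectivity consequences in the non-smooth setting: justifying that the Hadamard formula holds verbatim with the boundary integral taken over the piecewise smooth part of $\pO_i$ (no distributional corner contributions), and that the normal derivative $\p_{\nu_i}\psi_i$ is square-integrable against $X\cdot\nu_i$ along that set. This requires the elliptic regularity theory for the Dirichlet Laplacian on domains with corners — groundstates lie in a weighted Sobolev space, $\p_{\nu}\psi_i \in L^2$ of the smooth boundary, and the flow of an $H^s$ vector field ($s>2$) is $C^1$, so the change of variables and differentiation under the integral are legitimate. Once this regularity is in hand, the surjectivity of $d\Phi_{\id}$ and the splitting of the kernel are comparatively routine.
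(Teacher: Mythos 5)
Your proposal follows essentially the same route as the paper: you realize $\EsM$ as a level set of the smooth eigenvalue map (the paper uses $\Xi(\varphi)=\big(\lambda_1(\varphi(\Omega_1)),\ldots,\lambda_1(\varphi(\Omega_k))\big)$ transversal to the diagonal of $\bbR^k$, which is equivalent to your difference map into $\bbR^{k-1}$), with the same key ingredients: smooth dependence of the simple ground-state eigenvalue on the diffeomorphism via pullback and perturbation theory, Hadamard's formula justified on piecewise smooth domains through $H^2$ regularity at the corners, surjectivity of the differential from bump deformations supported on internal boundary segments where the normal derivative is nonzero, and smoothness of $\lap$ because on $\EsM$ it coincides with a single smooth eigenvalue branch. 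This is the paper's argument (which defers the transversality details to \cite{BKS12}), so there is no genuinely different approach or gap to report.
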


\Cref{thm:submanifold} allows us to talk about critical points of $\lap$. Since $\eta$ is a critical point of $\lap$ if and only if the identity is a critical point of $\lambda_{\scriptscriptstyle \eta(P)}$, it suffices to differentiate $\lap$ at the identity. We thus say that $P$ is a \textit{critical partition} if it is an equipartition with corners and the identity is a critical point of $\lap$.

\begin{theorem}
\label{thm:critical}
$P$ is a critical partition if and only if there exist nonzero real numbers $a_1, \ldots, a_k$ such that $a_1^2 + \cdots + a_k^2 = 1$ and 
\begin{equation}
\label{eq:normals}
	\left| a_i \frac{\p \psi_i}{\p \nu_i} \right| = \left| a_j \frac{\p \psi_j}{\p \nu_j} \right|
\end{equation}
on each $\Sigma_i \cap \Sigma_j$, where $\Sigma_i = \overline{\pO_i \backslash \p M}$ is the ``internal boundary" of $\Omega_i$.
\end{theorem}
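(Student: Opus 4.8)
The plan is to reduce the criticality condition to a finite-dimensional linear-algebra statement about the normal derivatives $\p\psi_i/\p\nu_i$, using the first variation of the ground-state eigenvalue together with the description of $T_{\id}\EsM$ from \Cref{thm:submanifold}. Since $\varphi(P)$ is an equipartition for every $\varphi\in\EsM$, the function $\lap$ coincides, near $\id$, with each of the smooth functions $\varphi\mapsto\lambda_1(\varphi(\Omega_i))$. By the Hadamard variation formula (already needed in the proof of \Cref{thm:submanifold}), its differential at $\id$ along $X\in T_{\id}\DsM$ equals $-c_i(X)$, where
\[
	c_i(X):=\int_{\pO_i}(X\cdot\nu_i)\Big(\frac{\p\psi_i}{\p\nu_i}\Big)^2 d\mu .
\]
By \Cref{thm:submanifold} one has $T_{\id}\EsM=\{X\in T_{\id}\DsM: c_1(X)=\cdots=c_k(X)\}$, and on this subspace all the $c_i(X)$ agree; hence $d\lap|_{\id}(X)=-c_1(X)$ there, and $P$ is a critical partition precisely when $c_1$ vanishes on $\{c_1=\cdots=c_k\}$. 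Since the diagonal $\bbR(1,\dots,1)$ is one-dimensional, this holds if and only if $(1,\dots,1)$ is \emph{not} in the image of the linear map $X\mapsto(c_1(X),\dots,c_k(X))$ from $T_{\id}\DsM$ to $\bbR^k$.

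Next I would apply finite-dimensional duality: $(1,\dots,1)$ misses this image if and only if there is a vector $b=(b_1,\dots,b_k)\in\bbR^k$ with $b_1+\cdots+b_k\neq 0$ such that $\sum_i b_i c_i(X)=0$ for every $X\in T_{\id}\DsM$. (Equivalently, such a $b$ packages a set of Lagrange multipliers for the equipartition constraints cutting out $\EsM$.) Rewriting $\sum_i b_i c_i(X)$ as a sum of integrals over the arcs of $\p P$ — using that $\nu_j=-\nu_i$ on an arc of $\Sigma_i\cap\Sigma_j$, and that $X\cdot\nu_i=0$ along $\pO_i\cap\p M$ because $X$ is tangent to $\p M$ — and testing against vector fields supported in the relative interior of a single arc, I would conclude that this identity is equivalent to
\[
	b_i\Big(\frac{\p\psi_i}{\p\nu_i}\Big)^2=b_j\Big(\frac{\p\psi_j}{\p\nu_j}\Big)^2 \quad\text{on each } \Sigma_i\cap\Sigma_j .
\]

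Finally I would translate this into the asserted form. By the Hopf lemma, $\p\psi_i/\p\nu_i$ is nonzero on the smooth part of $\pO_i$, so the identity above forces $b_i$ and $b_j$ to have the same sign across every adjacency; since $M$ is connected, the adjacency graph of $P$ is connected, so all the $b_i$ share one common sign, and they are all nonzero because $\sum_i b_i\neq0$. After replacing $b$ by $-b$ if necessary, setting $a_i=(b_i/\sum_j b_j)^{1/2}$ gives $\sum_i a_i^2=1$ and $a_i^2(\p\psi_i/\p\nu_i)^2=a_j^2(\p\psi_j/\p\nu_j)^2$, i.e.\ $|a_i\,\p\psi_i/\p\nu_i|=|a_j\,\p\psi_j/\p\nu_j|$; conversely, given such $a_i$, the choice $b_i=a_i^2$ has $\sum_i b_i=1\neq0$ and satisfies the displayed identity, so $P$ is critical. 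The main obstacle is the localization/realization step — producing an admissible $H^s$ vector field with prescribed normal component along an arc whose support avoids all corner points and $\p M$ — together with confirming that $(\p\psi_i/\p\nu_i)^2$ is integrable on $\pO_i$ so that the functionals $c_i$ are well defined for partitions with corners; both of these should follow from the same $H^{1/2}$-approximation machinery that underlies \Cref{thm:submanifold}.
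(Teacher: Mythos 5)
Your argument is correct and is essentially the paper's own proof: the paper likewise reduces criticality to Hadamard's formula (\Cref{thm:Hadamard}) combined with a Lagrange-multiplier argument (deferred to the proof of \cite[Theorem~9]{BKS12}), and your finite-dimensional duality between the image of $X\mapsto(c_1(X),\dots,c_k(X))$ and the diagonal in $\bbR^k$, followed by localization on single arcs and the sign/connectedness step, is exactly that multiplier argument made explicit. The two technical points you flag at the end---building an admissible $H^s$ vector field with prescribed normal component supported in the relative interior of one arc, and the integrability of $(\p\psi_i/\p\nu_i)^2$ on $\pO_i$---are not genuine obstacles: they are already covered by the regularity used for \Cref{thm:Hadamard} and \Cref{thm:submanifold} ($\psi_i\in H^2(\Omega_i)$, in fact $\psi_i\in C^1(\bar\Omega_i)$), and by extending $g\,\nu$ from an arc interior to a vector field compactly supported in $M^o$ away from the corners, which is automatically tangent to $\p M$.
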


\begin{rem}
The function $\lap$ depends on the choice of $s$ but we will usually suppress this dependence. If the identity is a critical point of $\lap \colon \EsMstar  \to \bbR$ for some $s_*>2$, then \eqref{eq:normals} holds and it follows that the identity is a critical point of $\lap \colon \EsM  \to \bbR$ for \emph{every} $s>2$. In other words, the notion of ``critical partition" does not depend on the choice of $s>2$.
\end{rem}

An immediate consequence of \Cref{thm:critical} is that $P$ is critical if and only if it is the nodal partition of an eigenfunction of $-\DP$. This generalizes \cite[Theorem~9]{BKS12} to the non-smooth, non-bipartite case. For convenience, we will often denote the boundary set $\p P$ by $\Sigma$, so that $\Sigma = \cup_i \Sigma_i$.

Another consequence of \Cref{thm:critical} is that we can define a ``weight function" $\rho \colon \Sigma \to \bbR$ by
\begin{equation}
\label{rhodef}
	\rho\big|_{\Sigma_i} = \left| a_i \frac{\p \psi_i}{\p \nu_i} \right|
\end{equation}
for each $i$. This will appear in \Cref{thm:Hess1} when we compute $\Hess\lap$. If the partition has corners, the weight $\rho$ will vanish there. This causes technical difficulties in the proof of \Cref{thm:equality}, which we overcome by a delicate approximation argument in \Cref{sec:comparing}.

Having characterized the critical points of $\lap$, we next compute the second derivative. The proof of \Cref{thm:equality} is based on an explicit formula for $\Hess\lap$ in terms of a weighted, two-sided version of the Dirichlet-to-Neumann map, $\DtnNu $, which we now describe.

First, we choose a piecewise smooth unit vector field $\nu$ along $\Sigma$ that is normal to the smooth part of $\Sigma$. (The one-sided limits of $\nu$ therefore exist at each corner, but need not coincide.) For each $i$ we define a function $\chi_i = \nu \cdot\nu_i \colon \Sigma_i \to \{\pm1\}$ that is constant along each smooth segment of $\Sigma_i$, and let
\begin{equation}
	\Snu := \left\{ f \in L^2(\Sigma) : \int_{\Sigma_i} \chi_i f \frac{\p\psi_i}{\p\nu_i}\,d\mu = 0 \text{ for each $i$} \right\}.
\end{equation}
For sufficiently smooth $f \in \Snu$ we have $\DtnNu f = \Pi_{\Snu}\big(\p_\nu u^f\big)$, where $\Pi_{\Snu}$ is the $L^2(\Sigma)$-orthogonal projection onto $\Snu$, $\p_\nu u^f$ is a function on $\Sigma$ satisfying
\begin{equation}
	\p_\nu u^f\big|_{\Sigma_i \cap \Sigma_j} = \chi_i \frac{\p u_i^f}{\p \nu_i} + \chi_j \frac{\p u_j^f}{\p \nu_j}
\end{equation}
for all $i \neq j$, and $u_i^f$ is any solution to the boundary value problem
\begin{equation}
\label{uiequation0}
	\Delta u + \Lambda(P) u = 0, \qquad u\big|_{\Sigma_i} = \chi_i f, \qquad u\big|_{\pO_i \setminus \Sigma_i} = 0.
\end{equation}
A precise definition will be given in \Cref{sec:DtN}. For now we just mention that $\DtnNu$ is a densely defined, self-adjoint operator on $\Snu$, generated by a symmetric bilinear form $\form$.

We are now ready to state our formula for the Hessian. Given a vector field $X$ and a choice of $\nu$ as described above, we will write the normal component $X\big|_\Sigma \cdot \nu$ simply as $X \cdot \nu$.

\begin{theorem}
\label{thm:Hess1}
Fix $s>3$ and let $P$ be a critical partition. If $\nu$ is a smooth unit normal vector field along the smooth part of $\Sigma$, then
\begin{equation}
\label{Hess1}
	\Hess \lap(\id)(X_1,X_2) = 2\form\big(\rho(X_1\cdot\nu), \rho(X_2 \cdot\nu) \big)
\end{equation}
for all $X_1, X_2 \in T_{\id} \EsM$.
\end{theorem}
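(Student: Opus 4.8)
The plan is to replace the constrained functional $\lap$ by an unconstrained auxiliary functional on $\DsM$, evaluate its Hessian via a second-order shape computation on each subdomain, and recognize the result as the two-sided Dirichlet--to--Neumann form $\form$. \emph{Step 1 (reduction).} By \Cref{thm:critical} there are numbers $a_1,\dots,a_k$, which we take positive, with $\sum_i a_i^2=1$ and $\rho|_{\Sigma_i}=|a_i\,\p_{\nu_i}\psi_i|$. Put $F(\varphi)=\sum_i a_i^2\,\lambda_1(\varphi(\Omega_i))$ on $\DsM$. Since the groundstate eigenvalue of a connected domain is simple, $F$ is smooth, and since all the $\lambda_1(\varphi(\Omega_i))$ coincide on $\EsM$ we have $F|_{\EsM}=\lap$. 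Next, $\id$ is a critical point of $F$ on all of $\DsM$: for $X$ tangent to $\p M$, Hadamard's first variation formula gives
\[
	dF(\id)(X)=-\sum_i a_i^2\int_{\pO_i}(X\cdot\nu_i)(\p_{\nu_i}\psi_i)^2\,d\mu=-\sum_i\int_{\Sigma_i}(X\cdot\nu_i)\rho^2\,d\mu=0,
\]
because each smooth arc of $\Sigma$ is shared by two subdomains with opposite outward normals. Hence $\Hess\lap(\id)=\Hess F(\id)|_{T_{\id}\EsM}$, which may be computed along any curve in $\DsM$. I also record a consequence of $P$ being critical, used throughout: since $d\lap(\id)=0$ and $d\lap(\id)(X)=\tfrac{d}{dt}\big|_{t=0}\lambda_1(\varphi_t(\Omega_i))$ for each $i$, the first variation of every groundstate eigenvalue vanishes in all directions $X\in T_{\id}\EsM$.

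\emph{Step 2 (second shape derivative).} Fix $X\in T_{\id}\EsM$ and let $\varphi_t$ be its flow (a curve in $\DsM$, since $X$ is tangent to $\p M$). For each $i$ I would introduce the shape derivative $u_i'$ of the groundstate, which — because the first variation vanishes — solves the \emph{homogeneous} problem $\Delta u_i'+\Lambda(P)u_i'=0$ in $\Omega_i$, with $u_i'|_{\Sigma_i}=-(X\cdot\nu_i)\p_{\nu_i}\psi_i$, $u_i'|_{\pO_i\setminus\Sigma_i}=0$, and $\int_{\Omega_i}\psi_i u_i'\,dx=0$. A second-order shape computation (equivalently: pull the eigenvalue problem back to the fixed domain $\Omega_i$ and apply analytic perturbation theory to the resulting smooth operator family) then yields
\[
	\frac{d^2}{dt^2}\Big|_{t=0}\lambda_1\big(\varphi_t(\Omega_i)\big)=2\int_{\Omega_i}\!\Big(|\nabla u_i'|^2-\Lambda(P)(u_i')^2\Big)\,dx\;+\;\mathcal B_i(X),
\]
where $\mathcal B_i(X)$ collects curvature- and acceleration-type boundary corrections.

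\emph{Step 3 (reassembly).} Summing over $i$ with weights $a_i^2$, I claim $\sum_i a_i^2\mathcal B_i(X)=0$. Along $\p M$ the corrections cancel per subdomain, because there the moving boundary of $\varphi_t(\Omega_i)$ is pinned to the fixed curve $\p M$ (and $X$ is tangent to $\p M$); along each shared internal arc $\gamma=\Sigma_i\cap\Sigma_j$ they cancel \emph{in pairs}, because the weighted factors agree, $a_i^2(\p_{\nu_i}\psi_i)^2=\rho^2=a_j^2(\p_{\nu_j}\psi_j)^2$, while the remaining geometric and deformation factors (the two boundary curvatures, and the normal component of the acceleration) are odd under the interchange $\nu_i\leftrightarrow\nu_j=-\nu_i$. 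For the surviving term, put $w_i=a_i u_i'$; since the positive groundstate decreases across $\pO_i$ one has $a_i\,\p_{\nu_i}\psi_i=-\rho$, and together with $\chi_i\nu=\nu_i$ this shows that $w_i$ solves $\Delta w_i+\Lambda(P)w_i=0$ with boundary data $\chi_i f$ on $\Sigma_i$ and $0$ on $\pO_i\setminus\Sigma_i$, where $f:=\rho\,(X\cdot\nu)$. Moreover $f\in\Snu$, since $\int_{\Sigma_i}\chi_i f\,\p_{\nu_i}\psi_i\,d\mu=-a_i\int_{\pO_i}(X\cdot\nu_i)(\p_{\nu_i}\psi_i)^2\,d\mu=a_i\,d\lap(\id)(X)=0$. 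Hence $w_i$ differs from the solution $u_i^f$ of \eqref{uiequation0} only by a multiple of $\psi_i$, and — because $f\in\Snu$ — that multiple does not change the quadratic form $\int_{\Omega_i}(|\nabla\,\cdot\,|^2-\Lambda(P)|\,\cdot\,|^2)$. Summing over $i$ and using that $\form$ is represented (after integrating by parts in $\DtnNu f=\Pi_{\Snu}(\p_\nu u^f)$) by $\form(f,h)=\sum_i\int_{\Omega_i}(\nabla u_i^f\cdot\nabla u_i^h-\Lambda(P)u_i^f u_i^h)\,dx$, we obtain
\[
	\Hess\lap(\id)(X,X)=2\sum_i\int_{\Omega_i}\!\Big(|\nabla u_i^f|^2-\Lambda(P)(u_i^f)^2\Big)\,dx=2\,\form\big(\rho(X\cdot\nu),\rho(X\cdot\nu)\big),
\]
and polarization gives \eqref{Hess1}.

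\emph{The hard part.} The main difficulty is that the $\Omega_i$ are only piecewise smooth, so near a corner the groundstate $\psi_i$ — and hence $\rho$ and all the boundary data above — has only the regularity dictated by the opening angle; in particular $\p_{\nu_i}\psi_i$ may blow up or vanish there. One must construct $u_i'$ with enough boundary regularity, justify each integration by parts and the convergence of every boundary integral near the corners, and verify that the cancellations in Step 3 persist up to the corner points. This is exactly why the hypothesis is strengthened to $s>3$: a diffeomorphism in $\Ds$ is then $C^2$ by Sobolev embedding, so the pulled-back metrics have $C^1$ coefficients and the moving-domain calculus is legitimate; the residual corner estimates are handled by weighted elliptic regularity on piecewise smooth domains, using that $X$ is tangent to $\p M$ to tame the singular behavior at corners lying on $\p M$.
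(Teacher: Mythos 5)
Your skeleton matches the paper's proof: both arguments weight the per-subdomain second variation by $a_i^2$, use the criticality identity $a_i^2(\p\psi_i/\p\nu_i)^2=a_j^2(\p\psi_j/\p\nu_j)^2$ together with $\nu_i=-\nu_j$ to cancel the $\psi$-dependent boundary corrections in pairs along internal arcs, identify the surviving Dirichlet-energy term with $\form\big(\rho(X\cdot\nu),\rho(X\cdot\nu)\big)$ by checking that $a_iu_i'$ has boundary data $\chi_i\,\rho(X\cdot\nu)$ and that $\rho(X\cdot\nu)\in\Snu$ (your computation of this, via the vanishing first variation, is correct, as is the remark that adding a multiple of $\psi_i$ does not change the quadratic form), and finish by polarization. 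Your Step 1 — passing to the unconstrained functional $F=\sum_i a_i^2\lambda_1(\varphi(\Omega_i))$ on $\DsM$, for which $\id$ is a free critical point, so the constrained Hessian is the restriction of the free one — is a clean variant of the paper's device of writing $\Hess\lap(\id)(X,X)=\sum_i a_i^2\,\tfrac{d^2}{dt^2}\lambda_1(\varphi_t(\Omega_i))\big|_{t=0}$ along a curve in $\EsM$; either reduction is fine.

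The genuine gap is in Steps 2--3, and it is exactly the point of the theorem. You posit a second shape derivative of the form $2\int_{\Omega_i}\big(|\nabla u_i'|^2-\Lambda(P)(u_i')^2\big)+\mathcal{B}_i(X)$ with $\mathcal{B}_i$ described only as ``curvature- and acceleration-type boundary corrections,'' and then cancel the $\mathcal{B}_i$ using the structure of the classical boundary formula. But that formula (\Cref{thm:2var}) is established only for $C^3$ domains; on the cornered $\Omega_i$ it is not even clear that $\mathcal{B}_i(X)$ is a well-defined convergent boundary integral (the shape derivative $w_i$ is only $H^1(\Omega_i)$, so $w_i\,\p_{\nu_i}w_i$ need not be integrable on $\pO_i$, and $\p_{\nu_i}\psi_i$ is merely continuous up to the boundary), nor that the pairwise cancellation survives at the corner points; deferring this to ``weighted elliptic regularity'' is asserting, not proving, the hard step. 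The paper's resolution is structurally different and constitutes the bulk of its proof: it never uses the boundary form of the second variation on $\Omega_i$. Instead it proves an interior, divergence-form second-variation formula (\Cref{interior}), valid for $\psi_i\in W^{2,2}$ on arbitrary open sets; it splits off $\dv(w_i\nabla w_i)$ so that \emph{all} $w_i$-dependence is absorbed exactly into the interior integral $2\int_{\Omega_i}(|\nabla w_i|^2-\lambda w_i^2)$, leaving a remainder $\tilde W_i$ whose boundary trace involves only $\psi_i$; and it justifies $\sum_i a_i^2\int_{\Omega_i}\dv\tilde W_i\,dV=0$ by multiplying with smooth cutoffs $\phi_n$ vanishing near the corners with $\phi_n\to1$ in $H^1$ (\Cref{lem:H1approx}), using $\psi_i\in C^1(\bar\Omega_i)$ to pass to the limit; see \Cref{prop:Hess}. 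Until you either adopt such a scheme or actually carry out the corner estimates you allude to (including convergence of each boundary integral and persistence of the cancellation up to the corners), Steps 2--3 remain a plausible outline rather than a proof.
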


This is similar to the formula for the Hessian obtained in \cite{BCCM2} for generic partitions, but there are some important differences:
\begin{enumerate}
	\item It is not assumed that $X_1$ and $X_2$ are normal to the smooth part of $\Sigma$. (If they were, they would necessarily vanish at the corner points, and hence could not describe a deformation that moves these points.)
	\item The right-hand side is written in terms of the bilinear form $\form$ instead of the corresponding self-adjoint operator $\DtnNu$. (The fact that $P$ is a critical partition guarantees that $\rho(X\cdot\nu)$ is in the form domain for any $X \in T_{\id} \EsM$, but it is not clear that it is in the operator domain, which is strictly smaller, due to the loss of regularity of $\psi_i$, and hence of $\rho$, at the corners.)
\end{enumerate}

The same two-sided Dirichlet-to-Neumann map\footnote{The operator $\DtnNu$ depends on the choice of unit normal $\nu$, but its index does not.} also encodes the deficiency of the partition $P$.

\begin{theorem}\cite[Theorem~1.7]{BCHS}
\label{thm:BCHS}
If $P$ is a critical partition, then $n_-(\DtnNu) = \delta(P)$.
\end{theorem}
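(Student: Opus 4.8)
The plan is to realise $\DtnNu$ as a limit, at spectral parameter $\Lambda := \Lambda(P)$, of a family of \emph{energy‑dependent} two‑sided Dirichlet‑to‑Neumann maps, and to extract $n_-(\DtnNu)$ from a Dirichlet--Neumann bracketing identity for $-\DP$. For $\mu<\Lambda$ the value $\mu$ lies strictly below $\lambda_1(\Omega_i)$ for every $i$, so for each $f\in L^2(\Sigma)$ the local problems $\Delta u_i+\mu u_i=0$ in $\Omega_i$, $u_i=\chi_i f$ on $\Sigma_i$, $u_i=0$ on $\pO_i\setminus\Sigma_i$ are \emph{uniquely} solvable, with no orthogonality constraint and no need for the projection $\Pi_{\Snu}$. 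Assembling the solutions into a function $u^f_\mu$ on $M$, this defines a self‑adjoint operator $\DtnNu(\mu)$ on all of $L^2(\Sigma)$, with quadratic form $q_\mu(f,f)=\int_M\big(|\nabla u^f_\mu|^2-\mu|u^f_\mu|^2\big)$ and discrete spectrum; the point is that $\DtnNu(\mu)\to\DtnNu$ as $\mu\uparrow\Lambda$, but only after restricting to the subspace $\Snu$ on which $u^f_\mu$ stays bounded.

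First I would establish the bracketing identity. Writing any $u$ in the form domain of $-\DP$ as $u=u^f_\mu+u^0$ with $u^0\in\bigoplus_i H^1_0(\Omega_i)$ — a $q_\mu$‑orthogonal decomposition — and noting that the form of $\bigoplus_i\big(-\Delta^{\mathrm{Dir}}_{\Omega_i}-\mu\big)$ has no negative directions when $\mu<\Lambda$, one gets $n_-\big(-\DP-\mu\big)=n_-\big(\DtnNu(\mu)\big)$ for all $\mu<\Lambda$. Differentiating $q_\mu(f,f)$ in $\mu$ for fixed $f$ gives $-\|u^f_\mu\|^2_{L^2(M)}<0$, so every eigenvalue branch $\mu\mapsto\tau_j(\mu)$ of $\DtnNu(\mu)$ is strictly decreasing; hence $n_-(\DtnNu(\mu))$ is non‑decreasing, equals the number of eigenvalues of $-\DP$ in $(-\infty,\mu)$, and — since $-\DP$ has discrete spectrum — is constant and equal to $n_-(-\DP-\Lambda)$ for $\mu$ in a left neighbourhood of $\Lambda$.

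The crucial step is to identify the limit of $\DtnNu(\mu)$ as $\mu\uparrow\Lambda$, and here I would use the hypothesis that $P$ is \emph{critical}. By \Cref{thm:critical} the identity \eqref{eq:normals} holds; combined with the geometric facts that on each smooth arc of $\Sigma_i\cap\Sigma_j$ one has $\nu_i=-\nu_j$ and $\chi_i=-\chi_j$, this forces the single linear relation $\sum_i a_i\,\chi_i\,\p_{\nu_i}\psi_i=0$ in $L^2(\Sigma)$. Hence the $k$ functions $\chi_i\p_{\nu_i}\psi_i$ span a subspace of dimension exactly $k-1$ (equivalently, the adjacency graph of $P$ is connected, as $M$ is), i.e.\ the $k$ conditions defining $\Snu$ reduce to $k-1$ independent ones and $\Snu$ has codimension $k-1$. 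A resolvent expansion on each piece, $u^f_\mu=\frac{-c_i(f)}{\Lambda-\mu}\,\psi_i+O(1)$ with $c_i(f)=\int_{\Sigma_i}\chi_i f\,\p_{\nu_i}\psi_i$, then gives
\[
	q_\mu(f,f)\;=\;-\,\frac{1}{\Lambda-\mu}\sum_{i=1}^{k}c_i(f)^2\;+\;O(1),\qquad \mu\uparrow\Lambda,
\]
so $q_\mu\to-\infty$ on the $(k-1)$‑dimensional space $\Snu^{\perp}$, while on a complementary family of subspaces converging to $\Snu$ the operators $\DtnNu(\mu)$ converge to $\DtnNu$ in the norm‑resolvent sense. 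Since the branches $\tau_j$ are decreasing, any branch tending to $0$ approaches it from above and is not counted, so for $\mu$ just below $\Lambda$,
\[
	n_-\big(\DtnNu(\mu)\big)\;=\;(k-1)+n_-\big(\DtnNu\big).
\]
Combining this with the previous step yields $n_-(-\DP-\Lambda)=(k-1)+n_-(\DtnNu)$, i.e.\ $n_-(\DtnNu)=n_-(-\DP-\Lambda)+1-k=\delta(P)$.

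The main obstacle is the limiting argument of the third step: one must justify, uniformly in $\mu$, the splitting of $\DtnNu(\mu)$ into a rank‑$(k-1)$ part that blows up on $\Snu^{\perp}$ and a part that converges to $\DtnNu$, and establish the norm‑resolvent convergence on $\Snu$. The difficulty is that $\psi_i$ — hence the data $\chi_i\p_{\nu_i}\psi_i$, the residues $c_i$, and the weight $\rho$ — lose regularity at the corners of $P$, so the needed uniform estimates on the extensions $u^f_\mu$ near the corners, and the resulting control of the quadratic forms, require exactly the kind of delicate $\wtH$ approximation and corner analysis stressed in the introduction. (When $P$ is smooth these estimates are classical, essentially the situation of \cite{BKS12}.)
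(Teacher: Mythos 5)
First, a point of comparison: the paper does not prove this statement at all --- \Cref{thm:BCHS} is imported verbatim from \cite[Theorem~1.7]{BCHS}, so there is no in-paper proof to measure your argument against. Your strategy (an energy-dependent two-sided Dirichlet-to-Neumann family $\DtnNu(\mu)$, the form decomposition $u=u^f_\mu+u^0$ giving $n_-(-\DP-\mu)=n_-(\DtnNu(\mu))$ for $\mu<\Lambda(P)$, monotonicity of the eigenvalue branches, and a residue analysis at $\mu=\Lambda(P)$) is a legitimate and recognizably standard route, in the spirit of the spectral-flow method of \cite{BCM19} that the paper itself invokes in \Cref{sec:examples}. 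Steps one and two are sound, and your dimension count for $\Snu$ is correct modulo a small normalization: \Cref{thm:critical} only fixes $|a_i|$, and the relation $\sum_i a_i\chi_i\,\p\psi_i/\p\nu_i=0$ requires choosing all $a_i>0$ (possible since the ground states are positive, so both one-sided normal derivatives have the same sign on each interface); connectivity of the adjacency graph then gives codimension exactly $k-1$, which is where the $+1-k$ in $\delta(P)$ comes from.

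The genuine gap is your third step, and you flag it yourself: the identity $n_-(\DtnNu(\mu))=(k-1)+n_-(\DtnNu)$ for $\mu$ just below $\Lambda(P)$ is asserted via an informal ``rank-$(k-1)$ blow-up plus norm-resolvent convergence on $\Snu$'' picture, with no uniform control of the $O(1)$ remainder in the expansion of $q_\mu$, no argument that \emph{exactly} $k-1$ branches diverge, and no treatment of the cross terms between the divergent directions and the would-be negative eigenfunctions of $\form$. As written, neither inequality is established. Note that the upper bound $n_-(\DtnNu(\mu))\le (k-1)+n_-(\form)$ can be obtained more cheaply than you suggest: for fixed $f\in\Snu\cap\wtH(\Sigma)$ the map $\mu\mapsto q_\mu(f,f)$ is nonincreasing with $q_\mu(f,f)\to\form(f,f)$, so $\form(f,f)\le q_\mu(f,f)$, and intersecting any $q_\mu$-negative subspace with $\Snu$ (codimension $k-1$) produces a $\form$-negative subspace. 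The lower bound is where the real work lies: one must build a negative subspace of dimension $(k-1)+n_-(\form)$ for $q_\mu$ by combining blow-up directions with approximate negative eigenvectors of $\form$, and show the singular term $-\frac{1}{\Lambda(P)-\mu}\sum_i c_i(f)^2$ dominates the coupling terms uniformly. Incidentally, the corner regularity you cite as the obstacle is not really the issue here: since $\psi_i\in C^1(\bar\Omega_i)$ (as the paper notes via Azzam's theorem), the residues $c_i$ are bounded functionals on $L^2(\Sigma)$ and the whole analysis lives at the level of $\wtH(\Sigma)$ traces and $H^1$ extensions; the delicate corner approximation in \Cref{sec:comparing} is needed for \Cref{thm:index}, not for this statement. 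Until the exact eigenvalue count near $\mu=\Lambda(P)$ is carried out, your argument is a plausible outline rather than a proof.
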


The theorem in this level of generality is due to \cite{BCHS}. Earlier versions for the bipartite cases appeared in \cite{BCM19,CJM2} and the non-bipartite case was considered in \cite{helffer2021spectral}.

We will prove \Cref{thm:equality} by comparing \Cref{thm:BCHS,thm:Hess1}. Specifically, we will compare the Morse indices of $\Hess\lap$ and $\DtnNu$. We know from \eqref{Hess1} that $\Hess\lap$ and $\form$ are related by the map $X \mapsto \rho(X\cdot\nu)$ from $T_{\id} \EsM$ to $\dom(\form)$, but this map is neither injective nor surjective.

The lack of injectivity stems from the obvious fact that any vector field $X$ that is tangent to $\Sigma$ does not move the partition $P$, and hence leaves the energy unchanged. (The resulting degeneracy is therefore a consequence of $\lap$ being defined on the space of diffeomorphisms, rather than partitions.) This affects the nullity of $\Hess\lap$ but not the Morse index; see \Cref{sec:null} for further discussion.

The lack of surjectivity is more subtle. If the partition has corners, then the weight function $\rho$ will vanish there, since the normal derivative of $\psi_i$ vanishes at each corner. It follows that $X \mapsto \rho(X\cdot\nu)$ is not surjective, since  every function in its range vanishes at the corners.  Nonetheless, we are able to show that the range is dense in a suitable norm, and conclude the following.

\begin{theorem}
\label{thm:index}
If $P$ is a critical partition, then $n_-(\Hess \lap) = n_-(\DtnNu)$.
\end{theorem}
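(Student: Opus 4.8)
The plan is to compare the two symmetric bilinear forms $\Hess\lap(\id)$ on $T_{\id}\EsM$ and $\form$ on $\dom(\form)\subset\Snu$ via the linear map $R\colon X\mapsto \rho(X\cdot\nu)$, exploiting the identity $\Hess\lap(\id)(X_1,X_2)=2\form(RX_1,RX_2)$ from \Cref{thm:Hess1}. The Morse index of a form is, by definition, the supremum of dimensions of subspaces on which it is negative definite, so the inequality $n_-(\Hess\lap)\le n_-(\form)=n_-(\DtnNu)$ is immediate: if $V\subset T_{\id}\EsM$ is a subspace on which $\Hess\lap(\id)$ is negative definite, then $R$ is injective on $V$ (since $RX=0$ forces $\Hess\lap(\id)(X,X)=0$), so $R(V)$ has the same dimension and $\form$ is negative definite on it. The content of the theorem is therefore the reverse inequality, and that is where the density argument flagged in the introduction comes in.

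For $n_-(\DtnNu)\le n_-(\Hess\lap)$, I would start from a finite-dimensional subspace $W\subset\dom(\form)$ of dimension $m=n_-(\DtnNu)$ on which $\form$ is negative definite. By compactness of the unit sphere of $W$ there is a constant $c>0$ with $\form(w,w)\le -c\|w\|^2$ for all $w\in W$. Now I would produce the \emph{dense range} statement: the image $R(T_{\id}\EsM)$ is dense in $\dom(\form)$ with respect to the form norm $\|w\|_{\form}^2:=\|w\|_{L^2(\Sigma)}^2+|\form(w,w)|$ (or whichever equivalent graph-type norm makes $\dom(\form)$ complete). Granting this, pick for a basis $w_1,\dots,w_m$ of $W$ approximants $RX_j$ with $\|RX_j-w_j\|_{\form}$ small; since $\form$ and the $L^2$ inner product are continuous in the form norm and $W$ is finite-dimensional, for small enough error the vectors $RX_1,\dots,RX_m$ are linearly independent, span a subspace $\widetilde W$ close to $W$, and $\form$ is still negative definite on $\widetilde W$ (the Gram matrix of $\form$ on the $RX_j$ is a small perturbation of the negative-definite Gram matrix of $\form$ on the $w_j$). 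Then $\widetilde W=R(V)$ for the $m$-dimensional subspace $V=\spn\{X_1,\dots,X_m\}$ (independence of the $X_j$ follows from independence of their images), and $\Hess\lap(\id)(X,X)=2\form(RX,RX)<0$ on $V\setminus\{0\}$, giving $n_-(\Hess\lap)\ge m$.

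The main obstacle — and the step I would spend the most care on — is the density of the range $R(T_{\id}\EsM)$ in $\dom(\form)$ in the form norm. The difficulty is precisely that $\rho$ vanishes at the corners of $\Sigma$ (since $\p_{\nu_i}\psi_i$ vanishes there), so every element of the range vanishes at the corners, whereas generic elements of $\dom(\form)$ need not. I would handle this by first recalling the structure of $\dom(\form)$: it is (up to the $\Snu$ constraint) a weighted $H^{1/2}$-type space on $\Sigma$, and the key point is that functions vanishing near each corner are dense in it in the form norm — this is a localization/cutoff argument in $\wtH$, using that a single point has zero $H^{1/2}$-capacity in one dimension, so multiplying by cutoffs that are $0$ near the corners and $1$ away from them converges in the relevant norm. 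Once a target $w\in\dom(\form)$ is replaced by $w_\varepsilon$ supported away from the corners, on the smooth part of $\Sigma$ the weight $\rho$ is smooth and bounded below, so $w_\varepsilon/\rho$ is an admissible function and one solves $X\cdot\nu = w_\varepsilon/\rho$ for a vector field $X\in\mathfrak{X}^s(N)$ tangent to $\p M$; the $\Snu$ constraint matches the defining constraint of $T_{\id}\EsM$ in \Cref{thm:submanifold} exactly because of how $\rho$ and the $\chi_i$ enter. Finally one checks $RX=w_\varepsilon\to w$ in the form norm. I expect the bookkeeping around the one-sided limits of $\nu$ at corners, the signs $\chi_i$, and matching the linear constraints defining $\Snu$ versus $T_{\id}\EsM$ to be the fiddly part, but the conceptual heart is the $H^{1/2}$ corner-cutoff density, which is where \Cref{sec:comparing}'s "delicate approximation argument" is presumably carried out.
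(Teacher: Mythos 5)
Your index-comparison argument is exactly the paper's: the inequality $n_-(\Hess\lap)\le n_-(\DtnNu)$ via injectivity of $X\mapsto\rho(X\cdot\nu)$ on negative subspaces, and the reverse inequality by approximating a basis of a negative subspace of $\dom(\form)$ by elements $\rho(X_i\cdot\nu)$, using continuity of $\form$ in the $\wtH(\Sigma)$ norm (the bound \eqref{abound}) to perturb the Gram matrix. That part is correct and essentially identical to \Cref{sec:equality}. The real content, as you recognize, is the density statement, and your sketch follows the same construction as the paper's \Cref{thm:dense} (approximate by a function $h$ vanishing near the corners, then set $X=\rho^{-1}h\,\nu$ and extend), but two of its steps are under-justified and are precisely where the paper's "delicate" work lives. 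First, the claim that plain cutoffs vanishing near the corners converge in the relevant norm is not obvious: $\wtH(\Sigma)$ is an $H^{1/2}$-type space and $s=1/2$ is exactly the critical exponent, where truncation on shrinking scales is scale-invariant; the correct mechanism is the capacity/duality fact you cite (no derivative of a delta lies in the dual), and the paper implements it not on $\Sigma$ but in the interior\,---\,\Cref{lem:H1approx} shows there is no nonzero $H^{-1}(\bbR^2)$ distribution supported at a point, \Cref{unfold} handles corners where an odd number of arcs meet with anti-continuity conditions, and \Cref{cor:approxSigma} then transfers the density to $\wtH(\Sigma)$ by taking traces. Second, after truncation your $w_\varepsilon$ will in general no longer lie in $\Snu$, so the vector field $X=\rho^{-1}w_\varepsilon\,\nu$ fails the tangency constraint \eqref{tan0} and is not in $T_{\id}\EsM$; the correspondence between the $\Snu$ constraints and \eqref{tan0} that you invoke only helps once the approximant itself satisfies the moment conditions. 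The paper repairs this in \Cref{lem:fsmooth} by adding small bump functions on each arc $\Sigma_i\cap\Sigma_j$ to match $\int_{\Sigma_i\cap\Sigma_j} f\,\tfrac{\p\psi_i}{\p\nu_i}\,d\mu$, and\,---\,this is the non-obvious point\,---\,uses criticality ($|a_i\,\p_{\nu_i}\psi_i|=|a_j\,\p_{\nu_j}\psi_j|$ on the shared arc) to ensure the same correction works simultaneously for both adjacent subdomains. Both gaps are fixable along these lines, so your route is the paper's route; note also that the paper proves the stronger density with $X$ supported away from $\p M$ and all corners ($\cT_0^\infty$), which is not needed for \Cref{thm:index} but is essential later for \Cref{thm:nonbi}.
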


In fact, we will see that any $f \in \dom(\form)$ can be approximated by $\rho(X\cdot\nu)$ where $X$ is smooth and vanishes in a neighborhood of each corner point. This means the deficiency $\delta(P)$ is equal to the Morse index of $\Hess\lap$ \emph{restricted to the space of smooth deformations that do not move the partition corners.} This will play a crucial role in the proof of \Cref{thm:nonbi}, since the class of partitions $\cP_k(P)$ is invariant under such deformations.

Our approximation result, \Cref{thm:dense}, has an interesting geometric manifestation. We will see examples in \Cref{sec:examples} where the eigenfunction for the most negative eigenvalue of $\DtnNu$ does not vanish at a corner point, and hence is not of the form $\rho(X\cdot\nu)$ for any $X \in T_{\id} \EsM$. Intuitively, this singular vector field corresponds to a ``deformation" that decreases the energy by changing the partition topology. Such ``deformations" do not remain in the space $\EsM$, but it turns out they can be \emph{approximated by} deformations in $\EsM$ that decrease the partition energy. In \Cref{sec:examples} we illustrate these approximating deformations and observe that they appear ``close" (in a sense we do not make precise here) to ``deformations" that change the partition topology.

In other words, the manifold $\EsM$ is rich enough that the Morse index of $\Hess\lap$ recovers the deficiency $\delta(P)$ without taking into account non-smooth (i.e., topology changing) deformations.

\subsection*{Outline of paper and notation}
In \Cref{sec:operators} we define the self-adjoint operators $\DtnNu$ and $-\DP$ and recall the notion of homology for partition boundary sets that was introduced in \cite{BCCM3}. In \Cref{sec:manifold} we describe the manifold $\Ds$ of diffeomorphisms on $N$, as well as the submanifolds $\DsM$ and $\DsP$.  In \Cref{sec:variation} we obtain formulas for the first and second variation of a Dirichlet eigenvalue when the domain is varied, assuming sufficient regularity of the perturbation but allowing the domain to have corners. These formulas allow us to characterize critical points of $\lap$ and compute its Hessian, which we do in \Cref{sec:Hessian}. In \Cref{sec:comparing} we use this formula for the Hessian to prove \Cref{thm:index}. As noted above, this is nontrivial and requires some delicate approximation arguments. We then use this theorem in \Cref{sec:LG} to prove \Cref{thm:bi,thm:nonbi}. Finally, in \Cref{sec:examples} we use our machinery to study non-bipartite partitions of the rectangle and disk.

For convenience we list the main spaces and objects arising in this paper.

\begin{align*}
	(N,g) & \quad\text{ a closed, oriented Riemannian surface} \\
	M & \quad\text{ a compact surface with piecewise smooth boundary, contained in $N$} \\
	M^o & \quad\text{ the interior of $M$} \\
	P = \{\Omega_i\} & \quad \text{ a partition of $M$} \\
	\Sigma = \p P & \quad \text{ the boundary set of $P$} \\
	\Sigma_i & \quad \text{ the ``internal boundary" of $\Omega_i$} \\
	-\DP & \quad\text{ the partition Laplacian associated to $P$} \\
	\delta(P) & \quad\text{ the deficiency of $P$} \\
	\mathfrak X^s(N) & \quad\text{ the space of $H^s$ vector fields on $N$} \\
	\Ds & \quad\text{ the set of $H^s$ diffeomorphisms of $N$} \\
	\DsM & \quad\text{ the diffeomorphisms in $\Ds$ that leave $M$ invariant} \\
	\DsP & \quad\text{ the diffeomorphisms in $\DsM$ that leave $P$ invariant} \\
	\EsM & \quad\text{ the diffeomorphisms in $\DsM$ that map $P$ to an equipartition} \\
	\lap & \quad\text{ the partition energy functional on $\EsM$} \\
	\nu & \quad\text{ a piecewise smooth unit vector field, normal to the smooth part of $\Sigma$}\\
	\DtnNu & \quad\text{ the weighted, two-sided Dirichlet-to-Neumann map} \\
	\form & \quad\text{ the bilinear form that generates $\DtnNu$} \\
	\rho & \quad\text{ the weight function on $\Sigma$} \\
\end{align*}


\section{Preliminaries}
\label{sec:operators}
\subsection{The two-sided Dirichlet-to-Neumann map}
\label{sec:DtN}

In this section we review the construction of the two-sided Dirichlet-to-Neumann map for $\Delta + \lambda_*$,  following the notation and presentation of \cite{BCHS}. We assume throughout this section that $P = \{\Omega_i\}$ is an equipartition with energy $\lambda_*$. This means $\lambda_*$ is the smallest eigenvalue of the Dirichlet Laplacian on each $\Omega_i$, with corresponding eigenfunction denoted $\psi_i$.

For each $i$ we define $\Sigma_i = \overline{\pO_i \backslash \p M}$ to be the ``internal boundary" of $\Omega_i$. Given a choice of $\nu$, we define a function $\chi_i = \nu \cdot\nu_i \colon \Sigma_i \to \{\pm1\}$ that is constant along each smooth segment of $\Sigma_i$.

Following the notation of \cite{M00}, we let
\begin{equation}
	\wtH(\Sigma_i) = \big\{ f \in L^2(\Sigma_i) : E_i f \in H^{1/2}(\pO_i) \big\},
\end{equation}
where $E_i \colon L^2(\Sigma_i) \to L^2(\pO_i)$  denotes extension by zero. This is complete with respect to the inner product $\left< f, g \right>_{\wtH(\Sigma_i)} = \left< E_i f, E_i g\right>_{H^{1/2}(\pO_i)}$.

\begin{rem}
For comparison, the space $H^{1/2}(\Sigma_i)$ consists of all $f \in L^2(\Sigma_i)$ that can be extended (but not necessarily by zero) to a function in $H^{1/2}(\pO_i)$. This means $\wtH(\Sigma_i)$ is a proper subset of $H^{1/2}(\Sigma_i)$ if $\Sigma_i \neq \pO_i$. An easy example is a nonzero constant function on $\Sigma_i$; it can obviously be extended to a function in $H^{1/2}(\pO_i)$, but its extension by zero is not in  $H^{1/2}(\pO_i)$.
\end{rem}

We then define
\begin{equation}
\label{def:wtHSigma}
	\wtH(\Sigma) := \left\{ f \in L^2(\Sigma) : \chi_i f\big|_{\Sigma_i} \in \wtH(\Sigma_i) \text{ for each $i$}\right\},
\end{equation}
which is is easily seen to be complete with respect to the inner product
\begin{equation}
	\left<f, g \right>_{\wtH(\Sigma)} = \sum_{i=1}^k \left< \chi_i f, \chi_i g \right>_{\wtH(\Sigma_i)}.
\end{equation}
Next, we define a closed subspace of $L^2(\Sigma)$ by
\begin{equation}
	\Snu := \left\{ f \in L^2(\Sigma) : \int_{\Sigma_i} \chi_i f \frac{\p\psi_i}{\p\nu_i}\,d\mu = 0 \text{ for each $i$} \right\}.
\end{equation}
The two-sided Dirichlet-to-Neumann map $\DtnNu$ will be constructed as the selfadjoint operator generated by a closed, lower semibounded bilinear form on $\wtH(\Sigma) \cap \Snu$.

Given $f \in \wtH(\Sigma) \cap \Snu$, consider the boundary value problem
\begin{equation}
\label{uiequation}
	\Delta u_i + \lambda_* u_i = 0, \qquad u_i\big|_{\Sigma_i} = \chi_i f, \qquad u_i\big|_{\pO_i \setminus \Sigma_i} = 0
\end{equation}
on $\Omega_i$. The boundary conditions are equivalent to $u_i\big|_{\pO_i} = E_i(\chi_i f)$. Since $f \in \wtH(\Sigma) \cap \Snu$, we have $E_i(\chi_i f) \in H^{1/2}(\pO_i)$ and
\[
	\int_{\pO_i} E_i(\chi_i f) \frac{\p\psi_i}{\p\nu_i}\,d\mu = \int_{\Sigma_i} \chi_i f \frac{\p\psi_i}{\p\nu_i}\,d\mu = 0,
\]
so there is a weak solution $u_i \in H^1(\Omega_i)$ to \eqref{uiequation}. Moreover, there is a unique solution for which $\int_{\Omega_i} u_i \psi_i \,dV = 0$. Denoting this solution by $u_i^f$, we then define
\begin{equation}
\label{aformdef}
	\form(f,h) = \sum_{i=1}^k \int_{\Omega_i} \big(\big\langle\nabla u_i^f, \nabla u_i^h\big\rangle - \lambda_* u_i^f u_i^h\big)\,dV
\end{equation}
for $f,h \in \dom(\form) = \wtH(\Sigma) \cap \Snu$.

It was shown in \cite[Section~3]{BCHS} that the form $\form$ is densely defined, closed and lower semibounded. In particular, we note the bound
\begin{equation}
\label{abound}
	|\form(f,h)| \leq C \|f\|_{\wtH(\Sigma)} \| h\|_{\wtH(\Sigma)}
\end{equation}
for all $f,h \in \wtH(\Sigma) \cap \Snu$, which will be used below in \Cref{sec:comparing}.

The form $\form$ therefore generates a selfadjoint operator $\DtnNu$ such that $\form(f,h) = \left<\DtnNu f, h\right>_{L^2(\Sigma)}$ for all $f \in \dom(\DtnNu)$ and $h \in \wtH(\Sigma) \cap \Snu$. To describe $\DtnNu$ and its domain, we introduce the two-sided normal derivative $\p_\nu u^f$ on $\Sigma$. This is an element of the dual space $\wtH(\Sigma)^*$, but for sufficiently smooth $u^f$ it is a function on $\Sigma$ satisfying
\begin{equation}
	\p_\nu u^f\big|_{\Sigma_i \cap \Sigma_j} = \chi_i \frac{\p u_i^f}{\p \nu_i} + \chi_j \frac{\p u_j^f}{\p \nu_j}
\end{equation}
for all $i \neq j$.

From \cite[Theorem~3.1]{BCHS} we know that $\dom(\DtnNu) = \big\{f \in \wtH(\Sigma) \cap \Snu : \p_\nu u^f \in L^2(\Sigma) \big\}$, and for any such $f$ we have 
\begin{equation}
	\DtnNu f = \Pi_{\Snu}\big(\p_\nu u^f\big),
\end{equation}
where $\Pi_{\Snu}$ is the $L^2(\Sigma)$-orthogonal projection onto $\Snu$.

\subsection{The partition Laplacian}
\label{sec:DeltaP}

Let $P = \{\Omega_i\}$ be a partition with corners. For any function $u$ on $M$, we define its restriction $u_i = u\big|_{\Omega_i}$ for each $i$. We say that $u$ is \emph{anti-continuous across $\p P$} if
\begin{equation}
\label{AM}
	u_i\big|_{\pO_i \cap \pO_j} = - u_j\big|_{\pO_i \cap \pO_j}
\end{equation}
whenever $\Omega_i$ and $\Omega_j$ are neighbors. We then define the function space
\begin{equation}
\label{H10P}
	H^1_0(M; \p P) = \big\{ u \in H^1(M^o\backslash \p P) : u \text{ vanishes on $\p M$ and 
	is anti-continuous across $\p P$} \big\}.
\end{equation}
Here $M^o = M\backslash \p M$ denotes the interior of $M$, so $u \in H^1(M^o\backslash \p P)$ if and only if $u_i \in H^1(\Omega_i)$ for each $i$. It follows that $H^1_0(M; \p P)$ is complete with respect to the inner product
\begin{equation}
\label{H10Pnorm}
	\left< u, v\right>_{_P} = \sum_{i=1}^k \left<u_i, v_i \right>_{H^1(\Omega_i)}.
\end{equation}
The corresponding norm will be denoted $\| \cdot \|_{_P}$.

We then let $-\DP$ be the self-adjoint operator on $L^2(M)$ generated by the bilinear form
\[
	b(u,v) = \sum_{i=1}^k \int_{\Omega_i} (\nabla u_i \cdot \nabla v_i)\,dV
\]
with $\dom(b) = H^1_0(M; \p P)$.
It is easily shown that $\dom(-\DP)$ consists of all functions $u \in \dom(b)$ that additionally satisfy $\Delta u_i \in L^2(\Omega_i)$ for each $i$ and
\begin{equation}
\label{AM2}
	\frac{\p u_i}{\p \nu_i}\bigg|_{\pO_i \cap \pO_j} = \frac{\p u_j}{\p \nu_j}\bigg|_{\pO_i \cap \pO_j}
\end{equation}
whenever $\Omega_i$ and $\Omega_j$ are neighbors. Note that this is also an anti-continuity condition, since the outward normals satisfy $\nu_i = - \nu_j$ on the common boundary of $\Omega_i$ and $\Omega_j$.

An important observation is that if $P$ is bipartite, then $-\DP$ is unitarily equivalent to the Dirichlet Laplacian on $M$, with the unitary map being multiplication by $1$ on the blue subdomains and $-1$ on the red subdomains. This is a special case of \cite[Proposition~2.2]{BCCM3}; we recall the general statement below in \Cref{prop:hom}.

Since this unitary map preserves nodal sets, in the bipartite case nodal sets of $-\DP$ eigenfunctions coincide with nodal sets of Laplacian eigenfunctions. While $-\DP$ is a more complicated operator than the Dirichlet Laplacian, on account of the internal boundary conditions imposed along $\p P$, it allows us to treat the bipartite and non-bipartite cases simultaneously; cf. \Cref{thm:equality}.

\subsection{Homology of partitions}
\label{ssec:hom}

We finally recall the notion of homology for partition boundary sets that was introduced in \cite{BCCM3}. Suppose $P_1$ and $P_2$ are partitions with corners. If the closure of the symmetric difference of their boundary sets,
\[
	\overline{ \p P_1 \triangle \p P_2} = \overline{(\p P_1 \cup \p P_2) \setminus (\p P_1 \cap \p P_2)},
\]
is the boundary set of a bipartite partition, then we say that $\p P_1$ and $\p P_2$ are \emph{homologous}. In particular,
$\p P_1$ is \emph{null-homologous} (that is, homologous to the empty set) if and only if it is the boundary set of a bipartite partition. This follows from the above definition by choosing $P_2 = \{ M^o\}$, so that $\p P_2 = \varnothing$.

We refer the reader to \cite{BCCM3} for examples and further discussion, including an extension of the definition of homology to less regular boundary sets. An important result from that paper describes how the operator $-\DP$ depends on $P$.

\begin{prop}
\label{prop:hom}
Let $P_1$ and $P_2$ be partitions with corners. If $\p P_1$ is homologous to $\p P_2$, then the operators $-\Delta^{\!\p P_1}$ and $-\Delta^{\!\p P_2}$ are unitarily equivalent.
\end{prop}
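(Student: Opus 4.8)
The plan is to exhibit an explicit unitary operator $U$ on $L^2(M)$ that intertwines the two partition Laplacians, realized as multiplication by a $\{\pm1\}$-valued function. Write $\Sigma_1 = \p P_1$, $\Sigma_2 = \p P_2$, and set $\Gamma = \overline{\Sigma_1 \triangle \Sigma_2}$; by hypothesis $\Gamma$ is the boundary set of a bipartite partition $R = \{R_j\}$, which therefore comes with a coloring $c \colon \{R_j\} \to \{\pm1\}$ assigning opposite values to neighboring pieces. Since $\Gamma \subset \Sigma_1 \cup \Sigma_2$, the ``common refinement'' $M^o \setminus (\Sigma_1 \cup \Sigma_2)$ is finer than each of the partitions $P_1$, $P_2$ and $R$; each of its connected components $\omega$ lies in a unique piece $R_{j(\omega)}$, and we put $\epsilon\big|_\omega = c(R_{j(\omega)})$. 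By construction $\epsilon$ is locally constant on the full-measure open set $M^o \setminus (\Sigma_1 \cup \Sigma_2)$, so multiplication by $\epsilon$ defines a (self-adjoint) unitary $U$ on $L^2(M)$ with $U^{-1} = U$, and $\nabla(\epsilon u) = \epsilon \nabla u$ almost everywhere for $u$ that is $H^1$ on the relevant pieces.

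The essential property of $\epsilon$, which is precisely the combinatorial content of the homology hypothesis, is that across the smooth part of a curve separating two refinement pieces $\omega_+$ and $\omega_-$, the product $\epsilon|_{\omega_+}\,\epsilon|_{\omega_-}$ equals $-1$ if the curve lies in $\Gamma = \overline{\Sigma_1 \triangle \Sigma_2}$ and $+1$ if it lies in $\Sigma_1 \cap \Sigma_2$: the first case is the definition of the coloring $c$, and in the second case $\omega_+$ and $\omega_-$ belong to a common piece of $R$. I would record the needed regularity facts about $\Gamma$, $\Sigma_1 \cup \Sigma_2$ and their corners from \cite{BCCM3}, and work modulo the measure-zero set of corner and self-intersection points throughout.

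With this in hand, take $u \in H^1_0(M; \Sigma_1)$ and set $v = \epsilon u$; the claim is $v \in H^1_0(M; \Sigma_2)$. Along a smooth arc $\gamma$ of $\Sigma_1 \cup \Sigma_2$ separating refinement pieces $\omega_\pm$, the one-sided traces of $u$ are anti-continuous if $\gamma \subset \Sigma_1$ and continuous if $\gamma \not\subset \Sigma_1$ (in the latter case $\omega_+$ and $\omega_-$ lie in a common component of $M^o \setminus \Sigma_1$, on which $u \in H^1$). Combining this with the sign behavior of $\epsilon$, a four-way case analysis ($\gamma \subset \Sigma_1 \cap \Sigma_2$; $\gamma \subset \Sigma_1 \setminus \Sigma_2$; $\gamma \subset \Sigma_2 \setminus \Sigma_1$; $\gamma \subset \p M$) shows that $v$ is anti-continuous across every arc of $\Sigma_2$, continuous across every arc of $(\Sigma_1 \cup \Sigma_2) \setminus \Sigma_2$, and vanishes on $\p M$. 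The continuity across the arcs of $\Sigma_1 \setminus \Sigma_2$, together with a standard gluing lemma for functions that are $H^1$ on each side of a piecewise-smooth curve with matching traces, yields $v \in H^1$ on each component of $M^o \setminus \Sigma_2$, hence $v \in H^1_0(M; \Sigma_2)$; interchanging $P_1$ and $P_2$ (note $\Gamma$ and $\epsilon$ are symmetric in $P_1, P_2$) shows $U$ is a bijection between the two form domains. Finally the Dirichlet form is preserved, $\int_{M^o} \nabla(\epsilon u) \cdot \nabla(\epsilon v)\,dV = \int_{M^o} \nabla u \cdot \nabla v\,dV$ since $\epsilon^2 \equiv 1$, so $U$ conjugates the form of $-\Delta^{\!\p P_1}$ to that of $-\Delta^{\!\p P_2}$; by uniqueness of the associated self-adjoint operator, $-\Delta^{\!\p P_2} = U(-\Delta^{\!\p P_1})U^{-1}$, with the operator domains matching automatically. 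I expect the main obstacle to be not the functional analysis but the geometric bookkeeping: making the structure of $\Gamma$ and of the common refinement near corners and self-intersections precise enough that the ``$\epsilon$ flips sign exactly across $\Gamma$'' property and the $H^1$-gluing are rigorously justified, which is where I would rely on the homology formalism of \cite{BCCM3}.
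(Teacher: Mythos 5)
Your construction is correct and is essentially the same argument the paper relies on: the proposition is quoted from \cite{BCCM3} without an in-text proof, and the unitary there (described in the paper for the null-homologous case as multiplication by $+1$/$-1$ on the two color classes) is exactly your multiplication by the $\pm1$ coloring function of the bipartite partition whose boundary set is $\overline{\p P_1\triangle\p P_2}$. The sign bookkeeping (flip across $\Sigma_1\triangle\Sigma_2$, no flip across $\Sigma_1\cap\Sigma_2$), the trace/anti-continuity case analysis, and the $H^1$ gluing across arcs of $\Sigma_1\setminus\Sigma_2$ are the same ingredients as in the cited proof.
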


A special case of this was noted in the previous section: if $P$ is bipartite, then $\p P$ is null-homologous, therefore $-\DP$ is unitarily equivalent to the Dirichlet Laplacian on $M$.

In general, a small perturbation of $P$ will not have boundary set homologous to $\p P$. It \emph{will} be homologous, however, if the perturbation does not move any of the corner points in the interior on $M$; see \Cref{def:hom}.


\section{Manifolds of diffeomorphisms}
\label{sec:manifold}
We now describe the smooth structure of the relevant spaces of diffeomorphisms. We begin in Section~\ref{ssec:diffeo} with the set of diffeomorphisms of a closed manifold $N$. In Section~\ref{ssec:submanifold} we assume that $N$ is two dimensional and restrict our attention to a subset $M \subset N$ with piecewise smooth boundary, then describe the subset of diffeomorphisms leaving $M$ invariant. We also consider the diffeomorphisms leaving $P$ invariant, where $P$ is a given partition of $M$.

\subsection{Diffeomorphisms of closed manifolds}
\label{ssec:diffeo}
In this section we let $N$ be a closed manifold of arbitrary dimension, and denote by $\Ds$ the set of $H^s$ diffeomorphisms of $N$. We recall the manifold structure of $\Ds$, following the approach of \cite{EM1984} and \cite{Eells}. 

We view $\Ds$ as an open submanifold of the set $H^s(N,N)$ of all $H^s$ maps from $N$ to itself, so we first construct a manifold structure on the latter space, assuming $s > n/2$. Given $f \in H^s(N,N)$, we define
\begin{equation}
\label{tangent}
	T_f H^s(N,N) = \big\{ V \in H^s(N,TN) : V_p \in T_{f(p)}N \text{ for all } p \in N \big\}.
\end{equation}
This is the set of $H^s$ sections of the pullback bundle $f^* TN$. When $f = \id$ it coincides with the space $\mathfrak X^s(N)$ of all $H^s$ vector fields on $N$.

Next, we fix an arbitrary Riemannian metric on $N$ and let $\exp \colon TN \to N$ denote the corresponding exponential map. We get coordinates in a neighbourhood of $f \in H^s(N,N)$ by defining
\begin{equation}
\label{Phi}
	\Phi_f \colon T_f H^s(N,N) \longrightarrow H^s(N,N), \qquad \Phi_f(V) = \exp \circ V.
\end{equation}
That is, $\Phi_f(V)$ is the map $p \mapsto \exp_{f(p)} V_p$. Note that $\Phi_f$ maps the zero section to $f$, since $\exp_{f(p)} (0) = f(p)$ for each $p \in N$. More generally, \eqref{Phi} gives a bijection from a neighbourhood of $0 \in T_f H^s(N,N)$ onto a neighbourhood of $f \in H^s(N,N)$; see \cite{Eells} for details.

It follows that $H^s(N,N)$ is a smooth manifold for any $s > n/2$. Letting $C^1(N,N)$ denote the space of $C^1$ maps from $N$ to itself, the inverse function theorem implies that $\Diff^1(N)$, the set of $C^1$ diffeomorphisms, it an open subset. For $s > n/2 + 1$ the Sobolev embedding theorem says that $H^s(N,N)$ is continuously embedded in $C^1(N,N)$, and so
\[
	\Ds = \big\{ \varphi \in H^s(N,N) : \varphi \in \Diff^1(N) \big\}
\]
is an open subset of $H^s(N,N)$ and hence is a smooth manifold.

\subsection{Boundaries and submanifolds}
\label{ssec:submanifold}
We now restrict the dimension to $n=2$, so $N$ is a compact surface without boundary. Letting $M \subset N$ be a compact, connected manifold with piecewise smooth (or empty) boundary and fixing an $s>2$, we define
\begin{equation}
	\DsM := \big\{ \varphi \in \Ds : \varphi(M) = M \big\}.
\end{equation}
Note that any $\varphi \in \DsM$ maps the boundary of $M$ onto itself, i.e., $\varphi(\p M) = \p M$. We are also interested in diffeomorphisms of $N$ that preserve a given partition with corners, as in \Cref{def:regular}, so we fix such a partition $P$ and define
\begin{equation}
	\DsP = \big\{ \varphi \in \DsM : \varphi(P) = P \big\}.
\end{equation}

We now show that the inclusions
\begin{equation}
	\DsP \subset \DsM  \subset \Ds
\end{equation}
are in fact embeddings.

\begin{theorem}
\label{thm:Ds}
If $M \subset N$ has piecewise smooth boundary and $P$ is partition with corners, then there exists a neighborhood $\cW \subset \Ds$ of the identity such that $\DsM \cap \cW$ is a submanifold of $\Ds \cap \cW$ and $\DsP \cap \cW$ is a submanifold of $\DsM \cap \cW$. \end{theorem}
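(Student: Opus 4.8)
The plan is to realize each inclusion as the zero set of a submersion between Hilbert manifolds, local near the identity, and then invoke the implicit function theorem for Banach manifolds. The key idea is to choose, near each piece of the boundary, a coordinate-type "defining function" for the constraint $\varphi(M) = M$ (respectively $\varphi(P) = P$) and show its differential at $\id$ is surjective with split kernel.

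First I would set up the constraint for $\DsM$. Since $\p M$ is piecewise smooth, write $\p M = \bigcup_a \gamma_a$ as a finite union of smooth arcs meeting at finitely many corner points. For a diffeomorphism $\varphi$ close to $\id$, the condition $\varphi(M) = M$ is equivalent to $\varphi(\p M) = \p M$, and near each smooth arc this is equivalent to saying that $\varphi$ carries $\gamma_a$ into $\p M$. I would choose a tubular neighborhood of the smooth part of $\p M$ and a smooth submersion $\Psi_a$ vanishing exactly on $\gamma_a$, then define $F(\varphi) = (\Psi_a \circ \varphi|_{\gamma_a})_a$, a map into a product of Sobolev spaces of functions on the arcs (of trace regularity $H^{s-1/2}$). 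One checks $F$ is smooth on a neighborhood of $\id$ (composition is smooth on $\Ds$ since $s>2$), $F(\id)=0$ because $\Psi_a$ vanishes on $\gamma_a$, and $F^{-1}(0) \cap \cW = \DsM \cap \cW$ for $\cW$ small. The differential $dF_{\id}(X) = (d\Psi_a \cdot X|_{\gamma_a})_a = (\langle \nabla \Psi_a, X\rangle|_{\gamma_a})_a$ measures the normal component of $X$ along $\p M$; this is surjective onto the target (any prescribed normal data can be realized by an ambient $H^s$ vector field, since extension operators from the arcs into $N$ exist) and has a complemented kernel, namely the closed subspace of vector fields tangent to the smooth part of $\p M$. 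Hence $\DsM \cap \cW$ is a submanifold with the stated tangent space. Some care is needed at the corner points: a vector field in $T_{\id}\DsM$ must be tangent to $\p M$ from both sides at a corner, hence must vanish at the corner (if the two arcs are not tangent); I would handle this by noting the constraint functions $\Psi_a$ overlap consistently near the corners, so the combined map $F$ still has surjective differential onto the appropriate subspace of the product, cut out by the finitely many matching conditions at corners.

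Next, for $\DsP \subset \DsM$, I would repeat the construction using the \emph{internal} boundary set $\Sigma = \p P$ in place of $\p M$. Write the smooth part of $\Sigma$ as a union of arcs, pick defining submersions, and form the analogous map $G$ on a neighborhood of $\id$ in $\DsM$; then $G^{-1}(0) = \DsP$ near $\id$. One must check $dG_{\id}$ is surjective \emph{as a map out of $T_{\id}\DsM$}, but this is fine since vector fields already tangent to $\p M$ can still have arbitrary normal components along the interior arcs of $\Sigma$ (which lie in $M^o$). The kernel is the space of vector fields tangent to both $\p M$ and the smooth part of $\Sigma$, again complemented. Thus $\DsP \cap \cW$ is a submanifold of $\DsM \cap \cW$.

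The main obstacle I anticipate is the corner analysis. Away from corners everything is a routine application of the implicit function theorem with trace operators on smooth curves. At a corner point of $\p M$ or of $\Sigma$, the two incident arcs impose two linear conditions on the single vector $X$ at that point, and one must verify that (i) the target space of $F$ (resp. $G$) is taken to be precisely the subspace of the product of arc-wise function spaces satisfying the induced finite-dimensional matching relations at corners — so that $F$ is genuinely a submersion onto \emph{that} space, not the full product — and (ii) the kernel remains complemented in $\mathfrak{X}^s(N)$ despite these pointwise vanishing conditions (this holds because imposing finitely many bounded point-evaluation-type conditions, valid for $s>2$ by Sobolev embedding, on a closed complemented subspace keeps it closed and complemented). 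Once the bookkeeping at corners is set up correctly, the rest follows from smoothness of composition on $\Ds$ and the Banach implicit function theorem. I would also remark that shrinking $\cW$ if necessary lets us take the \emph{same} neighborhood for both inclusions, giving the nested submanifold structure $\DsP \cap \cW \subset \DsM \cap \cW \subset \Ds \cap \cW$ claimed in the statement.
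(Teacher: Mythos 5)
Your plan (defining functions for each boundary arc, a normal-trace map into Sobolev spaces on the arcs, and the Banach implicit function theorem) is a genuinely different route from the paper's, but as written it has a real gap exactly where the theorem is hard: at the corners and intersections. For $\DsM$ alone, where only two arcs meet transversally at each corner, your scheme can in fact be completed, and more easily than you suggest: the two normal components at such a corner pair $X(p)$ against two linearly independent covectors, so (working in a dual frame near the corner) the joint normal-trace map is surjective onto the \emph{full} product of arc spaces and no ``matching relations'' are needed. The trouble is the second inclusion, $\DsP\subset\DsM$. At a corner of $\p M\cup\Sigma$ where three or more arcs meet --- triple points of the partition as in \Cref{fig:MS}, or points where $\Sigma$ hits $\p M$, which are precisely the configurations this theorem is designed to cover --- the normals $\nabla\Psi_a(p)$ are linearly dependent in the two-dimensional tangent space, so the values $\langle\nabla\Psi_a,X\rangle(p)$ satisfy a nontrivial linear relation. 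Hence $dG_{\id}$ is \emph{not} surjective onto the product of arc spaces, and your claim that vector fields tangent to $\p M$ ``can still have arbitrary normal components along the interior arcs of $\Sigma$'' is false at such points. Your fallback --- shrink the target to the subspace cut out by ``finite-dimensional matching relations'' --- is not actually set up: you would need (i) a simultaneous extension theorem identifying the image of the joint normal-trace map on several arcs meeting at a point, in $H^{s-1/2}$ for \emph{every} $s>2$, which is a genuine Grisvard-type compatibility analysis (at exceptional values of $s$ the compatibility is not purely pointwise); (ii) a verification that the nonlinear map $G$ itself takes values in that \emph{linear} subspace --- it does not for generic defining functions, since the tuple $\bigl(\Psi_a(\varphi(p))\bigr)_a$ sweeps out a curved surface, not the linearized constraint, unless the $\Psi_a$ are chosen linear in adapted corner coordinates, a choice you never make; and only then (iii) the IFT. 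As it stands, the corner step is asserted rather than proved, and it is the nontrivial content of the statement.

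For contrast, the paper avoids all trace-space analysis: \Cref{lem:geo} builds an auxiliary metric $\bar g$ in which the smooth part of $\p M\cup\Sigma$ is totally geodesic, and \Cref{lem:epsilon} shows that in the corresponding exponential chart $V\mapsto\exp\circ V$ the nonlinear conditions $\varphi(M)=M$ and $\varphi(P)=P$ become, for small $V$, exactly the linear conditions $V\in\mathfrak{X}^s_M(N)$ and $V\in\mathfrak{X}^s_P(N)$; since these are closed subspaces of the Hilbert space $\mathfrak{X}^s(N)$, the chart itself exhibits the nested submanifold structure with no submersion theorem, no characterization of images of trace maps, and no compatibility conditions at corners (the reduction to \cite{EM1984} being available only in the smooth, disjoint case). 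If you want to salvage your approach, the multi-arc corner extension lemma in (i) and the choice of defining functions in (ii) are the pieces you must supply.
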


If $\p M$ and $\Sigma$ are smooth and disjoint, this reduces to \cite[Theorems~6.1 and~6.2]{EM1984}. However, we are interested in the case that $\p M$ is only piecewise smooth and $\Sigma$ may have self-intersections, as well as intersections with $\p M$. The following construction allows us to treat this case; cf. \cite[Lemma~6.4]{EM1984}. 

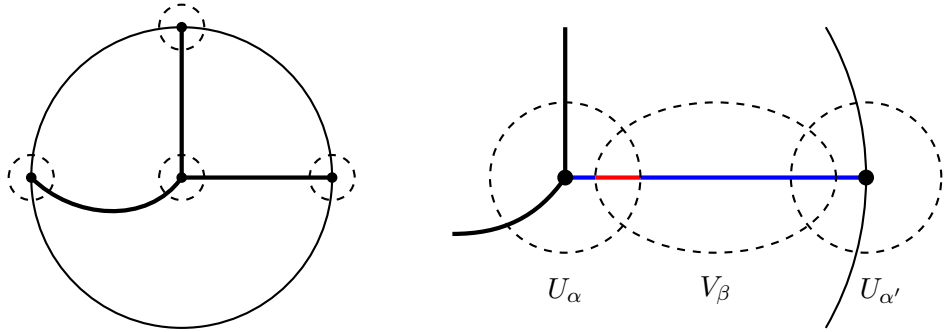
\begin{figure}
\begin{tikzpicture}[scale=1]
	\draw[ultra thick] (2,2) to (2,4);
	\draw[ultra thick] (2,2) to (4,2);
	\draw[ultra thick] (0,2) to [out=315, in=235] (2,2);
	\draw[thick] (4,2) arc (0:360:2);
	\draw[thick,dashed] (2.3,2) arc (0:360:0.3);
	\draw[thick,dashed] (2.3,4) arc (0:360:0.3);
	\draw[thick,dashed] (0.3,2) arc (0:360:0.3);
	\draw[thick,dashed] (4.3,2) arc (0:360:0.3);
	\fill (0,2) circle[radius=2pt];
	\fill (2,2) circle[radius=2pt];
	\fill (4,2) circle[radius=2pt];
	\fill (2,4) circle[radius=2pt];
\end{tikzpicture}
\hspace{1cm}
\begin{tikzpicture}[scale=1]
	\draw[ultra thick] (0,0) to (0,2);
	\draw[ultra thick, blue] (0,0) to (0.4,0);
	\draw[ultra thick, red] (0.4,0) to (1,0);
	\draw[ultra thick, blue] (1,0) to (4,0);
	\draw[ultra thick] (-1.5,-0.75) to [out=0, in=235] (0,0);
	\draw[thick,dashed] (2,0) ellipse (1.6 and 1);
	\draw[thick,dashed] (1,0) arc (0:360:1);
	\draw[thick,dashed] (5,0) arc (0:360:1);
	\draw[thick] (4,0) arc (0:30:4);
	\draw[thick] (4,0) arc (360:330:4);
	\node at (0,-1.5) {$U_{\alpha}$};
	\node at (2,-1.5) {$V_{\beta}$};
	\node at (4.2,-1.5) {$U_{\alpha'}$};
	\fill (0,0) circle[radius=3pt];
	\fill (4,0) circle[radius=3pt];
\end{tikzpicture}
\caption{Left: a 3-partition of the disk with a ball around each corner point. Right: a magnification showing a covering of part of $\p M \cup\Sigma$, as in the proof of Lemma~\ref{lem:geo}. The curve $\gamma$ (arising in step (3)) is shown in red, and the union of the blue and red curves is the set $\Gamma$, the smooth component of $\p M \cup \Sigma$ that contains $\gamma$.}
\label{fig:nbhd}
\end{figure}

\begin{lemma}
\label{lem:geo}
There exists a smooth Riemannian metric $\bar g$ on $N$ such that the smooth part of $\p M \cup \Sigma$ is totally geodesic.
\end{lemma}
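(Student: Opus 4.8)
The plan is to construct $\bar g$ locally near each smooth arc of $\p M \cup \Sigma$, using Fermi (normal) coordinates, and then patch the local metrics together with a partition of unity. First, recall that a curve $\gamma$ is a geodesic for a metric written in Fermi coordinates $(x,y)$ adapted to $\gamma$ — so that $\gamma = \{y = 0\}$, $x$ is arclength along $\gamma$, and $y$ is signed distance — precisely when the metric has the block form $g = dy^2 + \phi(x,y)^2\,dx^2$ with $\partial_y \phi(x,0) = 0$; in fact it suffices to take $\phi \equiv 1$ near $y=0$, i.e. the flat metric $dx^2 + dy^2$ in these coordinates, for which $\{y=0\}$ is totally geodesic. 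So the strategy is: around each point $p$ in the smooth part of $\p M \cup \Sigma$ that is \emph{not} a corner, choose a coordinate chart $(U_p; x, y)$ in which the relevant arc is $\{y = 0\}$, and declare $\bar g = dx^2 + dy^2$ on a smaller neighborhood; around each corner point (of which there are finitely many), simply keep the original metric $g$ on a small ball, as indicated in \Cref{fig:nbhd}.

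Second, I would assemble these pieces. Cover the smooth part of $\p M \cup \Sigma$ by finitely many charts $U_p$ as above, shrinking so that the closures $\overline{U_p}$ avoid all corner points; let $U_0$ be the complement of the set where we have prescribed a local model, which is an open set whose closure misses $\partial M \cup \Sigma$ except possibly... — more carefully, take $U_0 = N \setminus K$ where $K$ is a compact subset of $\bigcup_p U_p$ still containing a neighborhood of the non-corner part of $\p M \cup \Sigma$ away from the corner balls. Choose a smooth partition of unity $\{\chi_p\} \cup \{\chi_0\}$ subordinate to $\{U_p\} \cup \{U_0\}$, with $\chi_p \equiv 1$ on the region where $\{y=0\}$ is the arc and all other $\chi_q$ vanish there, and set $\bar g = \chi_0\, g + \sum_p \chi_p\, g_p$, where $g_p = dx^2 + dy^2$ in the chart $U_p$ and $g_0 = g$. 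Since each $g_p$ and $g$ is a smooth Riemannian metric and the set of positive-definite symmetric $2$-tensors is convex, $\bar g$ is a smooth Riemannian metric on $N$.

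Third, I would verify the geodesic property. Along a smooth arc $A$ of $\p M \cup \Sigma$ that is not near a corner, there is exactly one chart, say $U_p$, whose model sees $A$ as $\{y=0\}$; on a neighborhood of $A$ (minus the corner balls) all the other cutoffs vanish, so $\bar g = g_p = dx^2 + dy^2$ there, and hence $A$ is totally geodesic for $\bar g$ in that neighborhood. The only subtlety is what happens as one approaches a corner: the construction only forces $\bar g$ to agree with the flat model on an arc \emph{bounded away from} the corner points, so "totally geodesic" should be interpreted (and stated) as: every point of the smooth part of $\p M\cup\Sigma$ has a neighborhood in which that arc is a geodesic segment — which is all that is used in the subsequent proof of \Cref{thm:Ds} via \Cref{lem:geo}. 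The main obstacle, and the reason the patching has to be done with care, is exactly this interaction near the finitely many corner points: distinct smooth arcs meeting at a corner cannot simultaneously be $\{y=0\}$ in a single chart, so one cannot hope for a single Fermi model there, and one must content oneself with the local statement while ensuring the cutoffs are chosen so that on each arc, away from the corners, precisely one flat model is active. A secondary (routine) point is checking that the overlaps of two Fermi charts adapted to the \emph{same} arc are handled correctly — but since both give $dx^2+dy^2$ up to an isometry fixing $\{y=0\}$ and the convex combination of metrics all having $\{y=0\}$ totally geodesic again has $\{y=0\}$ totally geodesic, this causes no trouble.
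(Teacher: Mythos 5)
There is a genuine gap, and it is precisely at the corner points. You propose to "simply keep the original metric $g$ on a small ball" around each corner, but the smooth part of $\p M\cup\Sigma$ contains points arbitrarily close to the corners, and inside those balls the arcs are in general not geodesic for $g$ (the given metric is arbitrary). So the metric you build does not satisfy the conclusion of \Cref{lem:geo}. Your fallback reading\,---\,"every point of the smooth part has a neighborhood in which the arc is a geodesic segment"\,---\,is not a weakening at all: being geodesic is a pointwise condition on the open smooth part, so that statement is equivalent to the lemma, and your construction does not deliver it either; it only gives the property outside fixed balls around the corners. Nor can the corner region be ignored in the application: in the proof of \Cref{lem:epsilon} (hence \Cref{thm:Ds}) one takes $x\in\p M_*$ arbitrarily close to a corner $p$ and uses that the $\bar g$-geodesic through $x$ tangent to the boundary stays in $\p M_*$ unless it reaches $p$\,---\,that is exactly what the estimate $|V(x)|<d(x,p)$ is for. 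If the arc is not geodesic inside the corner ball, $\exp_x V(x)$ may leave $\p M$ immediately and the argument collapses. The paper instead chooses coordinates near each corner in which all arcs through it are straight segments and takes the Euclidean metric there, so the arcs are geodesic all the way up to the corner.

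A second problem is the convexity claim you lean on: a partition-of-unity (variable-coefficient) combination of metrics, each making $\{y=0\}$ totally geodesic, need not keep it totally geodesic. For instance, with $g_i=E_i(x,y)\,dx^2+dy^2$ and $\p_y E_i(x,0)=0$, the combination $\chi g_1+(1-\chi)g_2=E\,dx^2+dy^2$ has $\p_y E(x,0)=\p_y\chi(x,0)\,\big(E_1(x,0)-E_2(x,0)\big)$, which is generically nonzero. On overlaps of two genuine Fermi charts along the \emph{same} arc your two flat models literally coincide, so no harm is done there; but on the overlap between a corner chart and an arc chart the local models differ, and this is exactly where the real work lies. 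The paper's proof faces it head on: after the naive partition-of-unity patch it corrects the metric in Fermi coordinates, replacing $\tilde a$ by $\tilde a - y\,\chi(y)\,\tfrac{\p\tilde a}{\p y}(x,0)$, so that the second fundamental form of the arc vanishes on the overlap. Your plan can be repaired along these lines (Euclidean straightening metric at the corners plus an explicit correction on overlaps), but as written it does not prove the lemma.
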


We use the notation $\bar g$ to clarify that this is different from the metric $g$ whose Laplace--Beltrami operator we are studying.

\begin{proof}
It suffices to construct $\bar g$ in a neighborhood of $\p M \cup \Sigma$. We do this in three steps.

1) In a neighborhood of each corner point of $\p M \cup \Sigma$ we can find coordinates in which $\p M \cup \Sigma$ is given by a finite number of line segments meeting at the origin. Since the corner points are finite in number, we can assume that these neighborhoods, which we denote $\{U_\alpha\}$, are mutually disjoint; see \Cref{fig:nbhd}(left). In $U_\alpha$ we use these coordinates to define a Riemannian metric $g^\alpha$ by $g^\alpha_{ij} = \delta_{ij}$, so that straight lines in these coordinates are geodesics and hence the smooth part of $(\p M \cup \Sigma) \cap U_\alpha$ is totally geodesic with respect to $g^\alpha$.

2) The smooth part of $\p M \cup \Sigma$ consists of disjoint open curves, so we can find open sets $\{V_\beta\}$ that are mutually disjoint, do not contains any corner points and, together with the $\{U_\alpha\}$, cover all of $\p M \cup \Sigma$; see \Cref{fig:nbhd}(right). Each $(\p M \cup \Sigma) \cap V_\beta$ consists of a smooth curve, so we can easily construct a metric $g^\beta$ on $V_\beta$ for which this is totally geodesic.

3) We have thus covered $\p M \cup \Sigma$ with finitely many open sets, $\{U_\alpha\}$ and $\{V_\beta\}$, such that the $U_\alpha$ are mutually disjoint, as are the $V_\beta$, and each $U_\alpha \cap V_\beta$ is either empty or consists of a smooth curve. On each of these sets we have a Riemannian metric with respect to which the smooth part of $\pO \cup \Sigma$ is totally geodesic. To complete the proof we need to patch together these locally defined metrics in a way that preserves the totally geodesic condition. This will be done using Fermi coordinates.

First, we use a partition of unity to obtain a metric $\tilde g$ on $N$ that, for any $\alpha$ and $\beta$, agrees with $g^\alpha$ on $U_\alpha \backslash \overline V_\beta$ and agrees with $g^\beta$ on $V_\beta \backslash \overline U_\alpha$. There is no reason for the part of $\p M \cup \Sigma$ in the overlap $\overline{U_\alpha \cap V_\beta}$ to be totally geodesic in this new metric, so we need to modify $\tilde g$ there.

For fixed $\alpha$ and $\beta$, $(\p M \cup \Sigma) \cap (U_\alpha \cap V_\beta)$ is a smooth curve. We denote this curve by $\gamma$, and let $\Gamma$ be the component of the smooth part of $\p M \cup \Sigma$ that contains $\gamma$; see \Cref{fig:nbhd}(right). Let $(x,y)$ be Fermi coordinates, with respect to $\tilde g$, in a neighborhood of $\Gamma$. That is, $x$ is an arbitrary coordinate along $\Gamma$ and $y$ is the (signed) normal distance from $\Gamma$. We can assume that $\gamma$ corresponds to $(-1,1) \times \{0\}$ in these coordinates. The metric has the form
\[
	\tilde g = \tilde a(x,y) dx^2 + dy^2,
\]
where $\tilde a$ is a smooth, positive function. The second fundamental form of $\Gamma$ is proportional to $\tfrac{\p \tilde a}{\p y}(x,0)$. In particular, we have $\tfrac{\p \tilde a}{\p y}(x,0) = 0$ for $|x| \geq 1$ by construction. We then define
\begin{equation}
\label{adef}
	a(x,y) = \tilde a(x,y) - y\, \chi(y) \frac{\p \tilde a}{\p y}(x,0),
\end{equation}
where $\chi(y)$ is a smooth cutoff function that equals 1 in a neighborhood of $y=0$ and vanishes for $|y| \geq \delta$. This implies
\[
	\left| y\, \chi(y) \frac{\p \tilde a}{\p y}(x,0)\right| \leq \delta \left|\frac{\p \tilde a}{\p y}(x,0)\right|.
\]
Since $\tfrac{\p \tilde a}{\p y}(x,0)$ is bounded (it is supported in the compact set $[-1,1]$) and $\tilde a$ is bounded away from zero, we can choose $\delta$ small enough that the function $a(x,y)$ defined by \eqref{adef} is positive, and so
\[
	\bar g = a(x,y) dx^2 + dy^2
\]
defines a Riemannian metric on $U_\alpha \cup V_\beta$. It follows from \eqref{adef} that $\tfrac{\p a}{\p y}(x,0) = 0$ for all $x$, and so $\Gamma$ is totally geodesic with respect to $\bar g$.

Similarly modifying $\tilde g$ on all other $U_\alpha \cap V_\beta$, we obtain the desired metric $\bar g$.
\end{proof}

We let $\mathfrak{X}^s_M(N) \subset \mathfrak{X}^s(N)$ denote the closed subspace of vector fields that are tangent to the smooth part of $\p M$. Note that such a vector field must vanish at every corner point of $\p M$. We also let $\mathfrak{X}^s_{P}(N) \subset \mathfrak{X}^s_M(N)$ denote the closed subspace of vector fields that are tangent to the smooth part of $\p M \cup \Sigma$, and observe that these vector fields vanish at every corner point of $\p M \cup \Sigma$.

\begin{lemma}
\label{lem:epsilon}
With $\bar g$ as in Lemma~\ref{lem:geo} and $s > 2$, there exists $\epsilon > 0$ such that if $V \in \mathfrak{X}^s(N)$ and $\|V\|_{s} < \epsilon$, then $\varphi = \exp \circ V$ satisfies
\begin{enumerate}
	\item $\varphi \in \DsM$ if and only if $V \in \mathfrak{X}_M^s(N)$,
	\item $\varphi  \in \DsP$ if and only if $V \in \mathfrak{X}_P^s(N)$.
\end{enumerate}
\end{lemma}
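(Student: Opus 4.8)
The plan is to exploit the geometry produced by \Cref{lem:geo}. Write $S=\p M$ in part~(1) and $S=\p M\cup\Sigma$ in part~(2); in either case $S$ is compact, its smooth part $S^\circ$ is totally geodesic for $\bar g$, and its finitely many corner points form a set $B$ at which every field in $\mathfrak X^s_M(N)$ (resp.\ $\mathfrak X^s_P(N)$) vanishes. Here and below $\exp$ denotes the exponential map of $\bar g$. The lemma will follow once we establish the \emph{pointwise equivalence}: for $\|V\|_s$ small enough, $\varphi(p)=\exp_p(V_p)\in S$ for all $p\in S$ \emph{if and only if} $V_p\in T_pS^\circ$ at every smooth point $p$ and $V_p=0$ at every corner point. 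Indeed, the ``only if'' direction is then immediate, because $\varphi\in\DsM$ forces $\varphi(\p M)=\p M$ and $\varphi\in\DsP$ forces $\varphi(\p M\cup\Sigma)=\p M\cup\Sigma$ (as $\varphi$ permutes the $\Omega_i$ and fixes $\p M$), while the ``if'' direction is upgraded to $\varphi(M)=M$, resp.\ $\varphi(P)=P$, in the last paragraph. Before starting I would record that, since $\dim N=2$ and $s>2$, the embedding $H^s\hookrightarrow C^1$ holds, so $V$ is Lipschitz with $\operatorname{Lip}(V)\le C\|V\|_s$ and $\varphi\in\Ds$ once $\|V\|_s$ is small; in particular $|V_p|_{\bar g}\le C\|V\|_s\,d_{\bar g}(p,q)$ whenever $V$ vanishes at a corner $q$, which is the key estimate near the corners.

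For the pointwise equivalence \emph{away} from the corners, fix for each $\alpha$ a smaller neighbourhood $U_\alpha'$ of the corresponding corner with $\overline{U_\alpha'}\subset U_\alpha$, so that $K:=S\setminus\bigcup_\alpha U_\alpha'$ is a compact subset of $S^\circ$. Choosing an open $W\subset S^\circ$ with $K\subset W$ and $\overline W$ compact in $S^\circ$, the set $S\setminus W$ is compact and disjoint from $K$, hence at positive distance from it; so there is a uniform-radius $\bar g$-tubular neighbourhood $\mathcal T$ of $K$ with $\mathcal T\cap S\subseteq W\subseteq S^\circ$, carrying $\bar g$-Fermi coordinates. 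Because $S^\circ$ is totally geodesic, for $p\in K$ and $|v|_{\bar g}$ small: if $v\in T_pS^\circ$ then $t\mapsto\exp_p(tv)$ stays in $S^\circ$, so $\exp_p v\in S$; conversely, if $\exp_p v\in S$ then $\exp_p v\in\mathcal T\cap S\subseteq S^\circ$, the unique short $\bar g$-geodesic from $p$ to it is $t\mapsto\exp_p(tv)$, and this lies in $S^\circ$, so $v\in T_pS^\circ$. Compactness of $K$ makes ``small'' uniform, giving $\epsilon_1>0$ and the equivalence for all $p\in K$.

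Near the corners I would use the flat model from step~(1) of the proof of \Cref{lem:geo}: inside each $U_\alpha$ there are coordinates in which $\bar g$ is Euclidean and $S\cap U_\alpha$ is a union of line segments through the origin $q$, so $\exp_p v=p+v$. First, $V_q=0$: in the ``if'' direction this is part of the hypothesis; in the ``only if'' direction, $q$ is distinguished inside $S$ (a branch point, or a point where $S$ is not a $C^1$ submanifold), hence carried to such a point by the diffeomorphism $\varphi$ and therefore fixed when $\varphi$ is close to the identity, so $q=\exp_q V_q$ forces $V_q=0$. Now if $p\in U_\alpha$ lies on a segment $L$ at distance $d:=d_{\bar g}(p,q)$, then $|V_p|\le C\|V\|_s\,d$, while $d_{\bar g}(p,L')\ge c\,d$ for every \emph{other} segment $L'$ of $S\cap U_\alpha$, where $c>0$ is the sine of the (positive) minimal angle between distinct segments, taken over the finitely many corners. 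Hence for $C\|V\|_s<c$ the point $p+V_p$ can lie only on $L$, and $p+V_p\in L$ is equivalent to $V_p$ being tangent to $L$. This yields the pointwise equivalence on every $U_\alpha$ with a uniform $\epsilon_2>0$; on the overlaps $U_\alpha\cap V_\beta$ the flat model agrees with the Fermi picture by construction, so no inconsistency arises. Taking $\epsilon=\min(\epsilon_1,\epsilon_2)$, further shrunk so that $\exp\circ V\in\Ds$, completes the pointwise equivalence.

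It remains to pass from the inclusion $\varphi(S)\subseteq S$ to genuine invariance. Combined with the totally geodesic property, the pointwise equivalence actually shows that $\varphi$ maps each maximal smooth arc $\Gamma$ of $S$ into itself and fixes every corner; hence $\varphi|_{\overline\Gamma}$ is an injective continuous self-map fixing the endpoints (which are corner points), so it is monotone and therefore onto $\overline\Gamma$, while any smooth closed component of $S$ is a circle that an injective continuous self-map sends homeomorphically onto itself. Since the closures of these arcs and circles cover $S$, we obtain $\varphi(S)=S$. Then $\varphi$ permutes the components of $N\setminus S$, trivially because it is close to the identity, so $\varphi(M^o)=M^o$ and $\varphi(M)=M$; in part~(2) we moreover get $\varphi(\Sigma)=\Sigma$, and since the $\Omega_i$ are exactly the connected components of $M^o\setminus\Sigma$, proximity to the identity forces $\varphi(\Omega_i)=\Omega_i$ for each $i$, i.e.\ $\varphi(P)=P$. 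I expect the main obstacle to be the corner analysis: extracting a \emph{uniform} $\epsilon$ down to the corner points---exactly where the Lipschitz estimate and the vanishing of $V$ at corners are indispensable---and the topological bookkeeping at self-intersection and triple points of $\Sigma$; away from the corners this is the standard Ebin--Marsden argument, cf.\ \cite[Lemma~6.4]{EM1984}.
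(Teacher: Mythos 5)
Your proposal is correct and follows essentially the same route as the paper's proof: the auxiliary metric $\bar g$ making the smooth part of $\p M\cup\Sigma$ totally geodesic, the vanishing of $V$ at corners combined with the Sobolev/Lipschitz bound $|V_p|\le C\|V\|_s\,d(p,q)$ to control the corner region, uniqueness of short geodesics for the converse (tangency) direction, and the endpoint-fixing/open-closed component bookkeeping to upgrade $\varphi(S)\subseteq S$ to $\varphi(M)=M$ and $\varphi(P)=P$. The only differences are organizational—your unified ``pointwise equivalence'' covering both parts at once and the explicit flat-model angle estimate near corners, where the paper instead bounds $|V(x)|$ by the distance to the nearest corner—so this is a presentational variant rather than a different argument.
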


In fact it suffices to have the $C^1$ norm of $V$ sufficiently small. The requirement that $s>2$ gives the embedding $H^s \subset C^1$ since $n=2$.

\begin{proof}
We only prove the first assertion, as the second one follows from the same argument.

We first take $\epsilon > 0$ small enough that $\|V\|_s < \epsilon$ implies $\Phi(V) = \exp \circ V$ is a diffeomorphism. Next, we claim that there exists $\epsilon > 0$ such that $V \in \mathfrak{X}_M^s(N)$ and $\|V\|_s < \epsilon$ imply $\varphi(\p M) \subset \p M$. Since corner points of $\p M$ are fixed by $\varphi$, we just need to consider points in the smooth part of the boundary, which we denote $\p M_*$ for convenience.

Given $x \in \p M_*$, let $\gamma(t)$ denote the unique $\bar g$-geodesic with $\gamma(0) = x$ and $\gamma'(0) = V(x)$, so that $\varphi(x) = \exp_x V(x) = \gamma(1)$. The fact that $\p M_*$ is totally geodesic and $V(x)$ is tangent to $\p M_*$ guarantees $\gamma(t) \in \p M_*$ for small $t$, but not necessarily for large $t$, since $\gamma(t)$ will leave $\p M$ if it passes through a corner point. This will happen if $|V(x)|$ is larger than the distance (in $\p M$) from $x$ to the nearest corner point, so we must ensure $V$ is small enough that this cannot happen.
Letting $p$ denote the nearest corner point to $x$, Taylor's theorem gives
\[
	|V(x)| \leq |\nabla V(x')| \,d(p,x)
\]
for some point $x'$ between $p$ and $x$, since $V(p) = 0$. On the other hand, by the Sobolev embedding theorem there is a constant $K>0$ so that $|\nabla V(x')| \leq K \|V\|_s$ for any $x' \in M$. We thus choose $\epsilon < 1/K$, so that $\|V\|_s < \epsilon$ implies $|V(x)| < d(x,p)$, as required.

Therefore, if $V \in \mathfrak{X}_M^s(N)$ and $\|V\|_s < \epsilon$, we have that $\varphi$ is a diffeomorphism and $\varphi(\p M) \subset \p M$. We claim that $\varphi(\p M) = \p M$. For $\epsilon$ sufficiently small $\varphi$ maps each connected component of $\p M$ into itself. If $\p M$ has a smooth component (which must be diffeomorphic to $S^1$), it is easily shown that the image of this component is both open and closed, and hence it is mapped \emph{onto} itself. 
On the other hand, the image of a smooth segment of $\p M$ will be a connected subset of this segment that contains both endpoints, since they are fixed by $\varphi$, and so $\varphi$ maps this segment onto itself. It follows that $\varphi(\p M) = \p M$.

Since $\varphi$ is a diffeomorphism, it therefore maps $N \backslash \p M$ onto itself. In particular, each connected component of $N \backslash \p M$ is mapped to a connected component of $N \backslash \p M$. We thus choose $\epsilon$ small enough that $\varphi$ maps each connected component onto itself, and hence for $\|V\|_s < \epsilon$ we get $\varphi(M) = M$, as was to be shown. This proves the first implication.

To prove the converse, let $V \in \mathfrak{X}^s(N)$, with $\|V\|_s < \epsilon$, and suppose that $\varphi \in \DsM$. Since $\varphi$ is a diffeomorphism, it follows that $\varphi(\p M) = \p M$. Moreover, $\varphi$ fixes all of the corner points and maps each connected component of $\p M_*$ onto itself. In particular, this implies that $V$ vanishes at all corner points.

For $x \in \p M$, we have $\varphi(x) = \gamma(1)$, where $\gamma(t)$ is the geodesic with $\gamma(0) = x$ and $\gamma'(0) = V(x)$. Since $N$ is compact, we can choose $\epsilon>0$ small enough that $\|V\|_s < \epsilon$ implies $x$ and $\varphi(x)$ are contained in a geodesically convex neighborhood $U_x$, for each $x \in N$. This means there is a unique minimizing geodesic from $x$ to $\varphi(x)$. On the other hand, we can choose $\epsilon$ so that $\p M_* \cap U_x$ is connected and has length less than the injectivity radius of $(N,g)$. This means the part of $\p M_*$ between $x$ and $\varphi(x)$ is minimizing, and hence is the unique minimizing geodesic between these two points, i.e., it is the curve $\gamma(t)$ introduced above. It follows that $\gamma(t) \in \p M_*$ for all $0 \leq t \leq 1$, therefore $\gamma'(0) = V(x)$ is tangent to $\p M_*$, as was to be shown.
\end{proof}

We are now ready to prove our theorem about submanifolds of $\Ds$.

\begin{proof}[Proof of Theorem~\ref{thm:Ds}]
Choose open neighborhoods $\cU \subset \mathfrak{X}^s_M(N)$ and $\cV \subset \mathfrak{X}^s_M(N)^\perp$, both containing $0$, so that $\cU \times \cV$ is contained in
the $\epsilon$ ball around $0$ in $\mathfrak{X}^s(N)$, with $\epsilon$ as in Lemma~\ref{lem:epsilon}. Now consider
\begin{equation}
	\Phi\big|_{\cU \times \cV} \colon \cU \times \cV \longrightarrow \Ds
\end{equation}
with $\Phi = \Phi_{\id}$ as in \eqref{Phi}. This is a bijection from $\cU \times \cV$ onto $\cW = \Phi(\cU \times \cV)$, and it follows immediately from Lemma~\ref{lem:epsilon} that $\Phi(U+V) \in \cD^s_M(N)$ if and only if  $V=0$. We have thus found a coordinate chart $\cU \times \cV$ for $\cW$ such that $\DsM \cap \cW$ corresponds to $\cU \times \{0\}$, so we conclude that $\DsM \cap \cW$ is a submanifold of $\Ds \cap \cW$. The same argument, combined with the second half of Lemma~\ref{lem:epsilon}, shows that $\DsP \cap \cW$ is a submanifold of $\DsM \cap \cW$.
\end{proof}

\begin{rem}
It is convenient to work with Hilbert spaces since it is guaranteed that every closed subspace has a closed complement. If $\p M \cup \Sigma$ is smooth, it can be directly shown that the space of $C^k$ or $C^{k,\alpha}$ vector fields that are tangent to $\p M \cup \Sigma$ has a closed complement; see \cite[Remark~12.2(i)]{EM1984}. This is more delicate, however, when intersections are present.
\end{rem}


\section{Variational formulas}
\label{sec:variation}

In this section, we compute the first and second variation of a simple Dirichlet eigenvalue with respect to a deformation of the domain. While we only consider $C^2$ deformations, we are interested in non-smooth domains, in particular those with corners, so we use the method of interior variations, which applies to open, bounded sets in a Riemannian manifold of arbitrary dimension.

The main result of this section is \Cref{interior}. With an additional regularity assumption, we recover the classical formula of Hadamard for the first derivative, written as a boundary integral; see \Cref{thm:Hadamard}. This will be used to characterize critical points of the energy functional $\lap$ in \eqref{def:lambda}. The interior formula \eqref{Had2} for the second derivative is sufficient to compute the Hessian of $\lap$, but in \Cref{app:HadamardC3} we show how it can be used to obtain the classical formula for the second derivative on a $C^3$ domain.

For a discussion of Hadamard's formula with minimal regularity assumptions (on both the domain and the deformation) we refer the reader to \cite{Koz06,Koz12,Koz20}.

\subsection{Interior variations}
\label{sec:interior}

Let $\Diff^k(N)$ denote the set of $C^k$ diffeomorphisms on $N$. For an open set $\Omega \subset N$ we let $W^{1,2}(\Omega)$ be the set of $L^2$ functions with distributional gradient in $L^2(\Omega)$. Following \cite{M00}, we have that $H^1(\Omega) = \{ U\big|_\Omega : U \in W^{1,2}(N)\}$ is a subset of $W^{1,2}(\Omega)$. If $\Omega$ has Lipschitz boundary these spaces coincide, but in general the inclusion can be proper.

\begin{rem}
For our study of spectral mininal partitions, we only consider the Dirichlet problem on piecewise smooth domains. However, in \Cref{interior}, we allow $\Omega$ to be an arbitrary open set and consider more general boundary conditions, as we expect this result to be useful elsewhere, for instance in the study of Neumann domains \cite{BCE}. 
\end{rem}

For any closed subspace $\cV \subset W^{1,2}(\Omega)$ that contains $W^{1,2}_0(\Omega)$, we let $-\Delta^\cV_\Omega$ denote the self-adjoint operator on $L^2(\Omega)$ generated by the closed bilinear form $(u,v) \mapsto \int_\Omega \left<\nabla u, \nabla v\right>dV$ on $\cV$. The Dirichlet and Neumann Laplacians correspond to $\cV = W^{1,2}_0(\Omega)$ and $\cV = W^{1,2}(\Omega)$, respectively. Given a diffeomorphism $\varphi \in \Diff^1(N)$, we define $\varphi(\cV) = \{ u \circ \varphi^{-1} : u \in \cV\} \subset W^{1,2}\big(\varphi(\Omega)\big)$ and let $-\Delta^{\varphi(\cV)}$ denote the corresponding operator on $L^2\big(\varphi(\Omega)\big)$.

\begin{lemma}
\label{lem:varIFT}
Let $(N,g)$ be a closed Riemannian manifold, $\Omega \subset N$ an open, bounded set, and $\lambda$ a simple eigenvalue of $-\Delta^\cV$ with normalized eigenfunction $\psi$. There exists an open neighborhood $\cU \subset \Diff^1(N)$ containing the identity and a smooth function $\cU \ni \varphi \mapsto (\psi_\varphi, \lambda_\varphi) \in \cV \times \bbR$ such that $\psi_{\id} = \psi$, $\lambda_{\id} = \lambda$, and $\lambda_\varphi$ is an eigenvalue of $-\Delta^{\varphi(\cV)}$ with normalized eigenfunction $\psi_\varphi \circ \varphi^{-1}$.
\end{lemma}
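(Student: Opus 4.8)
The plan is to reduce the claim to a standard application of the implicit function theorem on Banach (indeed Hilbert) manifolds, by pulling back the eigenvalue problem on the moving domain $\varphi(\Omega)$ to a fixed functional-analytic setting on $\Omega$. First I would note that $u \mapsto u\circ\varphi$ is a linear isomorphism from $\varphi(\cV) \subset W^{1,2}(\varphi(\Omega))$ onto $\cV \subset W^{1,2}(\Omega)$, and likewise an isomorphism $L^2(\varphi(\Omega)) \to L^2(\Omega)$ (with a change-of-variables Jacobian weight), both depending smoothly on $\varphi \in \Diff^1(N)$ in a neighborhood of the identity. Thus $\lambda$ is an eigenvalue of $-\Delta^{\varphi(\cV)}$ with eigenfunction $w$ if and only if the pulled-back function $u = w\circ\varphi \in \cV$ satisfies a transformed weak equation
\begin{equation*}
	\int_\Omega \big\langle A_\varphi \nabla u, \nabla v\big\rangle\, dV = \lambda \int_\Omega J_\varphi\, u\, v\, dV \qquad \text{for all } v \in \cV,
\end{equation*}
where $A_\varphi$ is a symmetric, uniformly positive-definite matrix field and $J_\varphi>0$ a scalar field, each built from $D\varphi$ and the metric $g$, hence each depending smoothly on $\varphi$ (as a map into, say, $C^0$ tensor fields) and reducing to the identity coefficients at $\varphi=\id$.

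Next I would set up the nonlinear map whose zero set we want. Let $T_\varphi \colon \cV \to \cV^*$ be the bounded operator associated to the left-hand bilinear form and $S_\varphi \colon \cV \to \cV^*$ the one associated to the right-hand side (both factoring through the compact inclusion $\cV \hookrightarrow L^2(\Omega)$ for $S_\varphi$). Define
\begin{equation*}
	F \colon \cU \times \cV \times \bbR \lra \cV^* \times \bbR, \qquad
	F(\varphi, u, \mu) = \Big( T_\varphi u - \mu\, S_\varphi u,\ \ \langle S_{\id} u, u\rangle - 1 \Big),
\end{equation*}
the second component enforcing the normalization $\int_\Omega |\psi_\varphi|^2 = 1$ (equivalently $\|\psi_\varphi\circ\varphi^{-1}\|_{L^2(\varphi(\Omega))}=1$ after incorporating the Jacobian, which I would fold into the definition). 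Then $F(\id,\psi,\lambda)=0$. The partial derivative $D_{(u,\mu)}F$ at this point is the linear map $(v,\sigma)\mapsto \big(T_{\id}v - \lambda S_{\id}v - \sigma S_{\id}\psi,\ 2\langle S_{\id}\psi, v\rangle\big)$, i.e. essentially $-\Delta^\cV - \lambda$ bordered by the eigenfunction. Because $\lambda$ is a \emph{simple} eigenvalue, the Fredholm-alternative bordered-operator argument (kernel of $-\Delta^\cV-\lambda$ is $\bbR\psi$, range is $\psi^\perp$) shows this bordered map is a bijection $\cV\times\bbR \to \cV^*\times\bbR$; compactness of $S_{\id}$ gives that $T_{\id}-\lambda S_{\id}$ is Fredholm of index zero so the standard Lyapunov–Schmidt computation applies. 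The implicit function theorem then yields a smooth $\varphi \mapsto (\psi_\varphi,\lambda_\varphi)$ near $\id$ with the stated properties, and unwinding the pullback gives that $\lambda_\varphi$ is an eigenvalue of $-\Delta^{\varphi(\cV)}$ with normalized eigenfunction $\psi_\varphi\circ\varphi^{-1}$.

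The main obstacle is checking that $\varphi \mapsto A_\varphi$ and $\varphi\mapsto J_\varphi$ (hence $\varphi\mapsto T_\varphi, S_\varphi$ as maps into $\mathcal{L}(\cV,\cV^*)$) are genuinely smooth, and uniformly elliptic, only assuming $\varphi \in \Diff^1(N)$. The point is that these coefficients are polynomial (with the metric and a reciprocal Jacobian) in the entries of $D\varphi$, which lie in $C^0$; multiplication $C^0 \times C^0 \to C^0$ and inversion of a $C^0$ function bounded away from zero are smooth maps, and multiplication by a $C^0$ coefficient is a bounded operator on $W^{1,2}$, so $T_\varphi \in \mathcal{L}(\cV,\cV^*)$ depends smoothly on $\varphi$; shrinking $\cU$ in the $C^1$ topology keeps $A_\varphi$ uniformly positive definite and $J_\varphi$ bounded above and below. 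A secondary technical point is that $\cV$ is a \emph{fixed} Hilbert space independent of $\varphi$ — this is exactly why the pullback is essential and why no domain-dependent function space subtleties (of the kind discussed around $H^1$ vs. $W^{1,2}$ earlier) enter: all the $\varphi$-dependence has been transferred into the smoothly varying coefficients of the bilinear forms on the one space $\cV$.
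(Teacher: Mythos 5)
Your argument is correct and is essentially the proof the paper intends: the paper omits it, saying only that it is ``a standard application of the implicit function theorem'' and citing Henrot \S 5.7, and that standard proof is precisely your pullback of the weak eigenvalue equation to the fixed space $\cV$ with smoothly varying coefficients $A_\varphi, J_\varphi$, followed by the bordered-operator implicit function theorem at the simple eigenvalue. The one point to tighten is the aside about the compact inclusion $\cV \hookrightarrow L^2(\Omega)$, which can fail for an arbitrary bounded open $\Omega$ when $\cV \supsetneq W^{1,2}_0(\Omega)$; what the bordered-operator step really needs (and what ``simple eigenvalue'' should be read as including, both in your write-up and in the lemma itself) is that $\lambda$ is an isolated spectral point, so that $T_{\id} - \lambda S_{\id} \colon \cV \to \cV^*$ has kernel $\bbR\psi$ and closed range equal to the annihilator of $\psi$.
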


We omit the proof, which is a standard application of the implicit function theorem; see, for instance,  \cite[\textsection5.7]{Henrot}. We now give explicit formulas for the first and second derivative of $\lambda_{\varphi_t}$, where $\varphi_t$ is a one-parameter family of diffeomorphisms.

\begin{prop}
\label{interior}
Assume the hypotheses of \Cref{lem:varIFT} and let $\varphi_t$ be a family of diffeomorphisms with $\varphi_0 = \id$, so that $\varphi_t'(x) = X_t(\varphi_t(x))$ for some vector field $X_t$. Writing $\lambda_t = \lambda_{\varphi_t}$, we let $\lambda'$ denote $\lambda_t'$ evaluated at $t=0$, and similarly for $X$, $X'$ and $\lambda''$.

\begin{enumerate}
	\item If $\varphi_t \in C^1\big( (-\epsilon,\epsilon), \Diff^1(N)\big)$, then
\begin{equation}
\label{Had1}
	\lambda' = \int_\Omega \dv Y \,dV,
\end{equation}
where
\begin{equation}\label{Ydef}
	Y = \big( |\nabla \psi|^2 - \lambda \psi^2 \big) X - 2 (\nabla_X\psi) \nabla\psi.
\end{equation}

\item If $\varphi_t \in C^2\big( (-\epsilon,\epsilon), \Diff^1(N)\big)$, $X \in C^2(N)$ and $\psi \in W^{2,2}(\Omega)$, then
\begin{equation}
\label{Had2}
	\lambda'' = \int_\Omega \dv\big[ X(\dv Y) +  \tilde Y + 2Z - \lambda' \psi^2 X \big] \,dV,
\end{equation}
with $Y$ defined in \eqref{Ydef} and
\begin{align}
	\tilde Y &= \big( |\nabla \psi|^2 - \lambda \psi^2 \big) X' - 2 (\nabla_{X'}\psi) \nabla\psi, \label{Ytildedef}\\
	Z &= \big(\! \left<\nabla\psi, \nabla w\right> - \lambda \psi w \big) X - (\nabla_X\psi) \nabla w - (\nabla_X w)\nabla\psi, \label{Zdef}
\end{align}
where $w \in W^{1,2}(\Omega)$ is the unique solution to
\begin{equation}
\label{w:pde}
	\Delta w + \lambda w + \lambda'\psi = 0, \qquad w\big|_{\pO} = - (X \cdot\nu) \frac{\p\psi}{\p\nu}, \qquad \int_\Omega w\psi\,dV = 0.
\end{equation}
\end{enumerate}
\end{prop}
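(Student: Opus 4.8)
The plan is to treat both formulas as consequences of a single master identity: the Rayleigh-quotient characterization of $\lambda_t$ pulled back to the fixed domain $\Omega$ via $\varphi_t$. Concretely, write $\lambda_t = q_t(\psi_t)$, where $\psi_t = \psi_{\varphi_t}$ is the smooth family from \Cref{lem:varIFT} and $q_t(u) = \int_\Omega \big(|\nabla u|^2_{g_t} - \lambda_t u^2\big)\,dV_{g_t} + \lambda_t$ is a normalized quadratic form written with respect to the pulled-back metric $g_t = \varphi_t^* g$ (equivalently, change variables $y = \varphi_t(x)$ in the eigenvalue problem on $\varphi_t(\Omega)$ to land on $\Omega$ with a $t$-dependent metric and volume form). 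The virtue of this setup is that the constraint $\int u^2\,dV_{g_t} = 1$ and the stationarity of $\psi_t$ with respect to variations in $\cV$ together kill all terms involving $\dot\psi_t$ to first order, leaving $\lambda' = \frac{d}{dt}\big|_0 q_t(\psi)$, which is an integral over $\Omega$ of an expression built from the infinitesimal change of $g_t$ and $dV_{g_t}$. Since $\frac{d}{dt}\big|_0 g_t = \mathcal{L}_X g$ and $\frac{d}{dt}\big|_0 dV_{g_t} = (\dv X)\,dV$, a direct computation rearranges this into $\int_\Omega \dv Y\,dV$ with $Y$ as in \eqref{Ydef}; the key algebraic step is recognizing that the first-order terms assemble into a pure divergence, which one verifies using $\Delta\psi + \lambda\psi = 0$ in the weak sense against test functions in $\cV$ (the boundary condition is automatically encoded, since we never integrate by parts past the form domain).

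For the second derivative I would differentiate $\lambda_t = q_t(\psi_t)$ twice at $t=0$. Now $\dot\psi := \frac{d}{dt}\big|_0\psi_t$ does enter, but only through terms that can be handled: differentiating the eigenvalue equation $(-\Delta^{\varphi_t(\cV)})(\psi_t\circ\varphi_t^{-1}) = \lambda_t(\psi_t\circ\varphi_t^{-1})$ once, and pulling back, shows that $w := \dot\psi$ (after the correct bookkeeping of the metric variation) solves precisely the boundary value problem \eqref{w:pde} — the inhomogeneity $\lambda'\psi$ comes from $\dot\lambda$, the Dirichlet datum $-(X\cdot\nu)\partial_\nu\psi$ comes from the change of domain along the normal direction, and the orthogonality $\int_\Omega w\psi\,dV = 0$ comes from differentiating the normalization constraint (this is where we use $\psi\in W^{2,2}(\Omega)$, so that $\partial_\nu\psi$ makes sense as an $L^2$ boundary trace). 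Having identified $w$, the second derivative $\lambda'' = \frac{d^2}{dt^2}\big|_0 q_t(\psi_t)$ expands into: a term from differentiating the metric/volume variation twice (producing $X(\dv Y)$ and, through $X'$, the piece $\tilde Y$ in \eqref{Ytildedef}), cross terms pairing the first-order metric variation with $\dot\psi = w$ (producing $2Z$ with $Z$ as in \eqref{Zdef}), and a term from differentiating $\lambda_t$ inside the constraint (producing $-\lambda'\psi^2 X$). Once again the crucial observation is that all of these combine into a single divergence, which is what \eqref{Had2} asserts; establishing this requires using the equation for $w$ to integrate by parts and cancel the non-divergence remainder.

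The main obstacle is the divergence-structure claim in part (2): a priori the second variation is an integral of a messy quadratic expression in $X$, $X'$, $\nabla\psi$, $\nabla w$, and it is not obvious that it equals $\int_\Omega\dv[\,\cdots]$ for the specific integrand in \eqref{Had2}. The strategy for this is to compute $\frac{d}{dt}\big|_0\big(\dv Y_t\big)$ — where $Y_t$ is the natural $t$-dependent analog of $Y$ along the flow — since we already know $\lambda_t' = \int_{\varphi_t(\Omega)}\dv Y_t$; differentiating \emph{this} identity in $t$ and using the transport formula $\frac{d}{dt}\int_{\varphi_t(\Omega)} h_t = \int_{\varphi_t(\Omega)}\big(\dot h_t + \dv(h_t X_t)\big)$ converts the problem into verifying an identity among divergences on the fixed domain $\Omega$ at $t=0$, at which point it is a (lengthy but mechanical) pointwise computation using the PDE for $w$. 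The regularity hypotheses $X\in C^2(N)$ and $\psi\in W^{2,2}(\Omega)$ are exactly what is needed to justify the two differentiations under the integral sign and the single integration by parts that produces the $w$-dependent terms; no boundary regularity of $\Omega$ is used beyond what \Cref{lem:varIFT} already provides, which is the point of working with interior variations.
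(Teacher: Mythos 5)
Your proposal follows essentially the same route as the paper's proof: pull the Rayleigh quotient back to the fixed domain via $\varphi_t$ and use the eigenvalue equation plus the normalization constraint to eliminate the $\dot\psi_t$ terms for \eqref{Had1}, then apply \eqref{Had1} on the moving domain $\varphi_t(\Omega)$, differentiate with the transport formula (the paper's \Cref{lem:Henrot}), identify the derivative of the eigenfunction with $w$ solving \eqref{w:pde}, and verify pointwise that $(\dv Y_t)'\big|_{t=0} = \dv\big[\tilde Y + 2Z - \lambda'\psi^2 X\big]$, with the hypotheses $X \in C^2(N)$, $\psi \in W^{2,2}(\Omega)$ serving exactly to make the transport lemma applicable. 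The one point to tighten is that the solution of \eqref{w:pde} is the \emph{shape} derivative $u' = \frac{d}{dt}\big|_{t=0}\big(\psi_t\circ\varphi_t^{-1}\big)$ rather than the material derivative $\dot\psi_t \in \cV$ (these differ by $\nabla_X\psi$, and only the former has boundary value $-(X\cdot\nu)\,\p\psi/\p\nu$), which is what your ``correct bookkeeping of the metric variation'' must make explicit.
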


\begin{rem}
\label{rem:reg1}
Note that \eqref{Had1} only requires $\psi \in W^{1,2}(\Omega)$. While $Y$ depends on $\nabla \psi$, its divergence does not contain second derivatives, due to a cancellation of Hessian terms and the fact that $-\Delta\psi =\lambda\psi$; see \eqref{eq:Ydiv}. (The cancellation of Hessians is valid for the weak divergence, since $\psi \in C^\infty(\Omega)$ by interior regularity.) The same is true of the integrand in \eqref{Had2}, and in fact this result is valid for any $\psi \in W^{1,2}(\Omega)$; see \Cref{rem:reg2}. We have imposed the extra assumption $\psi \in W^{2,2}(\Omega)$ here, as it greatly simplifies the proof and is sufficient for the purposes of this paper.
\end{rem}

Before giving the proof (in \Cref{sec:intproof}) we describe some consequences of \eqref{Had1} and \eqref{Had2} for domains with sufficiently regular boundary. These will be used  in the proofs of \Cref{thm:critical,thm:Hess1}.

\subsection{Hadamard's formula}
From \Cref{interior} we obtain a version of Hadamard's formula for the Dirichlet problem.

\begin{theorem}
\label{thm:Hadamard}
If $\Omega$ has Lipschitz boundary and $\psi \in H^2(\Omega) \cap H^1_0(\Omega)$, then
\begin{equation}
\label{smoothHad1}
	\lambda' = -\int_{\pO}\left(\frac{\p\psi}{\p\nu}\right)^2 (X\cdot\nu) \, d\mu.
\end{equation}
\end{theorem}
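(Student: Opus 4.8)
The plan is to deduce \eqref{smoothHad1} from the interior formula \eqref{Had1} by converting the volume integral $\int_\Omega \dv Y\, dV$ into a boundary integral via the divergence theorem, using the extra regularity $\psi \in H^2(\Omega) \cap H^1_0(\Omega)$ and the Lipschitz boundary to justify the integration by parts. Since $\psi \in H^1_0(\Omega)$ and $\Omega$ has Lipschitz boundary, the trace of $\psi$ on $\pO$ vanishes, so the tangential part of $\nabla\psi$ on $\pO$ vanishes and $\nabla\psi\big|_{\pO} = \frac{\p\psi}{\p\nu}\,\nu$. The vector field $Y = (|\nabla\psi|^2 - \lambda\psi^2)X - 2(\nabla_X\psi)\nabla\psi$ lies in $L^1(\Omega)$ with $\dv Y \in L^1(\Omega)$ (by \Cref{rem:reg1}, the divergence involves no second derivatives after cancellation, but here $\psi \in W^{2,2}$ makes this immediate), and $Y$ has a well-defined normal trace on $\pO$ in the appropriate sense, so $\int_\Omega \dv Y\, dV = \int_{\pO} Y\cdot\nu\, d\mu$.

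The key computation is then to evaluate $Y\cdot\nu$ on $\pO$. The first term contributes $(|\nabla\psi|^2 - \lambda\psi^2)(X\cdot\nu)$; since $\psi = 0$ on $\pO$ we have $\psi^2 = 0$ there and $|\nabla\psi|^2 = \left(\frac{\p\psi}{\p\nu}\right)^2$, so this term is $\left(\frac{\p\psi}{\p\nu}\right)^2 (X\cdot\nu)$. The second term contributes $-2(\nabla_X\psi)(\nabla\psi\cdot\nu) = -2(\nabla_X\psi)\frac{\p\psi}{\p\nu}$; and since $\nabla\psi = \frac{\p\psi}{\p\nu}\,\nu$ on $\pO$, we have $\nabla_X\psi = (X\cdot\nu)\frac{\p\psi}{\p\nu}$, making this term $-2(X\cdot\nu)\left(\frac{\p\psi}{\p\nu}\right)^2$. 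Adding the two gives $Y\cdot\nu = -\left(\frac{\p\psi}{\p\nu}\right)^2(X\cdot\nu)$ on $\pO$, which yields \eqref{smoothHad1}.

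The main obstacle is making the integration by parts rigorous at the level of regularity assumed: $\Omega$ is only Lipschitz and $\psi \in H^2(\Omega)$, so $\nabla\psi \in H^1(\Omega) \hookrightarrow L^q(\Omega)$ for suitable $q$, and one must check that $Y$ and $\dv Y$ are in the right spaces for the Gauss–Green formula to hold and for the trace $\frac{\p\psi}{\p\nu} \in L^2(\pO)$ (or $H^{1/2}(\pO)$) to make pointwise sense of the boundary integrand. On a Lipschitz domain the normal trace of an $H^1$ vector field is well-defined in $H^{-1/2}(\pO)$, and since $\psi \in H^2$ the normal derivative $\frac{\p\psi}{\p\nu}$ lies in $H^{1/2}(\pO) \subset L^2(\pO)$, so all the pairings are legitimate; the quadratic expression $\left(\frac{\p\psi}{\p\nu}\right)^2$ is then in $L^1(\pO)$ and the identity holds. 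One should also recall that $X \in C^1(N)$ (or at least $C^1$ near $\pO$), which is guaranteed by $\varphi_t \in C^1((-\epsilon,\epsilon),\Diff^1(N))$, so $X\cdot\nu$ is bounded and the boundary integral converges. This is essentially bookkeeping with trace theorems rather than a deep difficulty, but it is where the care must go.
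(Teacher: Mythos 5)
Your proposal is correct and follows essentially the same route as the paper: apply the interior formula \eqref{Had1}, pass to a boundary integral via the divergence theorem on the Lipschitz domain, and use $\psi\big|_{\pO}=0$ to get $\nabla\psi\big|_{\pO}=\frac{\p\psi}{\p\nu}\nu$ and hence $Y\cdot\nu=-\left(\frac{\p\psi}{\p\nu}\right)^2(X\cdot\nu)$. The only cosmetic difference is in the justification of Gauss--Green: the paper simply notes that $\psi\in H^2(\Omega)$ makes $Y\in W^{1,1}(\Omega)$ and invokes the divergence theorem for $W^{1,1}$ fields on Lipschitz domains (with $Y\cdot\nu\in L^1(\pO)$), which is cleaner than your appeal to $H^{-1/2}(\pO)$ normal traces of $H^1$ vector fields, since $Y$ itself need not lie in $H^1(\Omega)$.
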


In particular, \eqref{smoothHad1} holds in any dimension if $\pO$ is $C^2$, and in two dimensions if $\pO$ is piecewise $C^2$ with convex corners, by \cite[Remark~3.2.4.6]{Gr}.

Before starting the proof, we recall that the divergence theorem
\begin{equation}
\label{div:weak}
	\int_\Omega \dv Y \,dV = \int_{\pO} (Y \cdot \nu) \,d\mu 
\end{equation}
holds for any vector field $Y \in W^{1,1}(\Omega)$ on a Lipschitz domain $\Omega$. For such $Y$ we have $Y \cdot \nu \in L^1(\pO)$, i.e., the normal component of $Y$ is a function, rather than a distribution, as would be the case if $Y$ were less regular; cf. \cite[Theorem~2.2]{Girault}.

\begin{proof}[Proof of \Cref{thm:Hadamard}]
The assumption $\psi \in H^2(\Omega)$ implies the vector field $Y$ defined in \eqref{Ydef} is in $W^{1,1}(\Omega)$, therefore
\[
	\lambda' = \int_{\pO} \dv Y\,dV = \int_{\pO} (Y\cdot\nu) \,d\mu.
\]
Since $\psi$ vanishes on $\pO$, its gradient is purely normal, i.e., $\nabla\psi\big|_{\pO} = \frac{\p\psi}{\p\nu} \nu$. This implies
\[
	Y\cdot\nu = \left(\frac{\p\psi}{\p\nu}\right)^2 (X\cdot\nu) - 2 \left(\frac{\p\psi}{\p\nu}\right)^2 (X\cdot\nu) = - \left(\frac{\p\psi}{\p\nu}\right)^2 (X\cdot\nu)
\]
and \eqref{smoothHad1} follows.
\end{proof}

\subsection{Proof of \Cref{interior}}
\label{sec:intproof}

In the proof we will refer to the ``inverse" metric tensor $g^\sharp$. This is a symmetric $(2,0)$ tensor given by $g^\sharp(\alpha_1,\alpha_2) = g(\alpha_1^\sharp, \alpha_2^\sharp)$ on one-forms $\alpha_1,\alpha_2$, where $\alpha_i^\sharp$ denotes the vector field dual to $\alpha_i$. We then write $\varphi_t^* g^\sharp$ for the pushforward by $\varphi_t^{-1}$, i.e. $\big(\varphi_t^* g^\sharp\big)(\alpha,\beta) = g^\sharp\big( (\varphi_t^{-1})^*\alpha, (\varphi_t^{-1})^*\beta)$. Finally, we denote by $g_t = (\varphi_t^* g^\sharp)^\flat$ the corresponding $(0,2)$ tensor. This is a one-parameter family of Riemannian metrics, which in general is not the same as $\varphi_t^* g$.

We will use \cite[Corollaire~5.2.5]{Henrot}, which says how to differentiate functions on a moving domain.

\begin{lemma}
\label{lem:Henrot}
Suppose $\varphi_t \colon (-\epsilon,\epsilon) \to \Diff^1(N)$ is differentiable at $0$, with $\varphi_0 = \id$ and $\varphi_t'\big|_{t=0} = X$. Define $\Omega_t = \varphi_t(\Omega)$ and suppose $f_t \in L^1(\Omega_t)$ for each $t$. If $f_0 \in W^{1,1}(\Omega)$ and the function $F_t := f_t \circ \varphi_t \in L^1(\Omega)$ is differentiable at $t=0$, then $I(t) = \int_{\Omega_t} f_t\,dV$ is differentiable at $t=0$, with
\begin{equation}
\label{Ideriv}
	I'(0) = \int_\Omega \Big\{ f'_0 + \dv(f_0 X)\Big\}\,dV,
\end{equation}
where the function $f_0'$ satisfies
\begin{equation}
\label{f0'}
	f_0'\big|_K = \frac{d}{dt} \left(f_t\big|_K\right) \bigg|_{t=0}
\end{equation}
for any compact $K \subset \Omega$.
\end{lemma}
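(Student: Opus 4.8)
\emph{Proof plan.} The idea is to pull the integral back to the fixed domain $\Omega$ using $\varphi_t$, reducing the problem to differentiating a curve in $L^1(\Omega)$ at $t=0$. By the change of variables formula,
\[
  I(t)=\int_{\Omega_t}f_t\,dV=\int_\Omega(f_t\circ\varphi_t)\,J_t\,dV=\int_\Omega F_t\,J_t\,dV,
\]
where $J_t$ is the density of the pulled-back volume, $\varphi_t^*(dV)=J_t\,dV$. Since $t\mapsto\varphi_t$ is differentiable at $0$ into $\Diff^1(N)\subset C^1(N,N)$, the curve $t\mapsto D\varphi_t$ is differentiable at $0$ into $C^0$, and since $J_t$ is built from $D\varphi_t$ and the smooth metric density by a determinant and a composition, $t\mapsto J_t$ is differentiable at $0$ as a curve in $C^0(N)$. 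Because $\varphi_0=\id$ we have $J_0\equiv1$, and since the derivative depends only on the $1$-jet $(\varphi_0,\varphi_0')=(\id,X)$ it equals the Lie derivative of the volume form: $\tfrac{d}{dt}\big|_0\varphi_t^*(dV)=\mathcal L_X(dV)=(\dv X)\,dV$, i.e.\ $J_0'=\dv X$.

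Next I would apply a Banach-space product rule. By hypothesis $t\mapsto F_t$ is differentiable at $0$ in $L^1(\Omega)$; write $\dot f$ for its derivative (note $F_0=f_0$, as $\varphi_0=\id$). Multiplication by an element of $C^0(N)$ is bounded on $L^1(\Omega)$, with $\|gh\|_{L^1(\Omega)}\le\|g\|_{L^1(\Omega)}\|h\|_{C^0(N)}$, so inserting the expansions $F_t=f_0+t\dot f+o(t)$ in $L^1(\Omega)$ and $J_t=1+t\,\dv X+o(t)$ in $C^0(N)$ shows that every cross term is $o(t)$ in $L^1(\Omega)$. Hence $t\mapsto F_tJ_t$ is differentiable at $0$ in $L^1(\Omega)$ with derivative $\dot f+f_0\,\dv X$, and composing with the bounded functional $u\mapsto\int_\Omega u\,dV$ gives that $I$ is differentiable at $0$ with $I'(0)=\int_\Omega\big(\dot f+f_0\,\dv X\big)\,dV$.

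Finally I would split the ``material derivative'' $\dot f$. Formally, differentiating $F_t=f_t\circ\varphi_t$ by the chain rule and using $\varphi_0=\id$, $\varphi_0'=X$ gives $\dot f=f_0'+\nabla_X f_0$, where $f_0':=\partial_t f_t\big|_{t=0}$ is the function satisfying $f_0'\big|_K=\tfrac{d}{dt}(f_t|_K)\big|_{t=0}$ for compact $K\Subset\Omega$. To make this rigorous with the available regularity I would fix such a $K$, note that $\Omega_t\supset K$ and $\varphi_t^{-1}$ is defined and $C^1$-close to $\id$ near $K$ for small $t$ (so the composition operators $g\mapsto g\circ\varphi_t^{-1}$ are uniformly bounded as operators on $L^1$ near $K$), and differentiate the identity $f_t|_K=(F_t\circ\varphi_t^{-1})\big|_K$; since $f_0\in W^{1,1}(\Omega)$ gives $\nabla f_0\in L^1$ and hence $\nabla_X f_0\in L^1(\Omega)$, this identifies $\tfrac{d}{dt}(f_t|_K)\big|_{t=0}=\dot f|_K-\nabla_X f_0|_K$, so $f_0'=\dot f-\nabla_X f_0$. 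Then $\dot f+f_0\,\dv X=f_0'+\nabla_X f_0+f_0\,\dv X=f_0'+\dv(f_0X)$, which is \eqref{Ideriv}.

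The main obstacle is this last step: because $f_0$ is only $W^{1,1}$, the separation of $\dot f$ into the shape derivative $f_0'$ plus the transport term $\nabla_X f_0$, together with the pointwise-on-compacts characterization of $f_0'$, has to be done in $L^1_{\mathrm{loc}}$ with careful control of the composition operators $g\mapsto g\circ\varphi_t^{\pm1}$ as $t\to0$; with slightly more regularity of $f_t$ (e.g.\ continuity on $\Omega_t$) it is routine. The statement is \cite[Corollaire~5.2.5]{Henrot}.
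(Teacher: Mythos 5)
The paper does not actually prove this lemma: it is quoted directly from Henrot--Pierre, \cite[Corollaire~5.2.5]{Henrot}, so there is no in-paper argument to compare against. Your proposal reproduces the standard proof behind that citation, and its structure is sound: pulling back to the fixed domain, differentiating the product $F_tJ_t$ in $L^1(\Omega)$ (using $J_0=1$, $J_0'=\dv X$ in $C^0$, which only needs differentiability of $t\mapsto\varphi_t$ at $t=0$ in the $C^1$ topology), and then splitting the material derivative as $\dot f=f_0'+\nabla_Xf_0$, which is exactly where $f_0\in W^{1,1}$ enters. The only ingredient you defer rather than prove is the identification $\tfrac{d}{dt}\big(f_0\circ\varphi_t^{-1}\big)\big|_{t=0}=-\nabla_Xf_0$ in $L^1(K)$ for $f_0\in W^{1,1}$; this is the sub-lemma Henrot--Pierre establish before their corollary, and it does go through under your hypotheses (no $C^1$-in-$t$ curve needed): for smooth $g$ and a diffeomorphism $\psi$ in a fixed $C^1$-neighborhood of the identity one has $\|g\circ\psi-g\|_{L^1(K)}\leq C\,\|\psi-\id\|_{C^0}\,\|\nabla g\|_{L^1(\Omega)}$, and density of smooth functions in $W^{1,1}$ together with $\|\varphi_t^{-1}-\id\|_{C^0}=O(t)$ and the smooth case yields the claimed $L^1_{\mathrm{loc}}$ differentiability by a three-term approximation; combined with your expansion of $(F_t-F_0)\circ\varphi_t^{-1}$ (where, as you note, uniform boundedness of the composition operators handles the $o(t)$ term, and $\dot f\circ\varphi_t^{-1}\to\dot f$ in $L^1(K)$ by approximating $\dot f$ by continuous functions) this closes the argument, and the product rule $\dv(f_0X)=f_0\,\dv X+\nabla_Xf_0$ is valid a.e.\ since $f_0\in W^{1,1}$ and $X$ is $C^1$. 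So I would not call this a gap in the approach, but to make the write-up self-contained you should state and prove that composition lemma explicitly rather than wave at it, or cite the corresponding lemma in \cite{Henrot}.
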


The hypothesis $f_0 \in W^{1,1}(\Omega)$ is the reason for our assumption $\psi \in W^{2,2}(\Omega)$ in \Cref{interior}.

\begin{rem}
The equation \eqref{Ideriv} is easily obtained by a formal calculation, but one has to be careful when differentiating $f_t$, since the domains are changing. The content of \Cref{lem:Henrot} is that it suffices to differentiate $f_t\big|_K$ for compact $K$. (This is well defined because $K \subset \Omega$ implies $K \subset \Omega_t$ for small enough $t$.) That is, there exists a function $f_0'$ satisfying \eqref{f0'} for each $K$ such that \eqref{Ideriv} holds.
\end{rem}

\begin{proof}[Proof of \Cref{interior}]
For convenience we set $\psi_t = \psi_{\varphi_t}$ and $\lambda_t = \lambda_{\varphi_t}$. Since $\psi_t \circ \varphi_t^{-1}$ is $L^2(\Omega_t)$-normalized, we have
\begin{equation}
\label{eval1}
	\lambda_t = \int_{\Omega_t} g\big(\nabla(\psi_t \circ \varphi_t^{-1}), \nabla(\psi_t \circ \varphi_t^{-1}) \big) \,dV.
\end{equation}
By the chain rule, the integrand on the right-hand side is
\begin{align*}
	g\big(\nabla(\psi_t \circ \varphi_t^{-1}), \nabla(\psi_t \circ \varphi_t^{-1}) \big) &= g^\sharp\big(d(\psi_t \circ \varphi_t^{-1}), d(\psi_t \circ \varphi_t^{-1}) \big) \\
	&= g^\sharp\big( (\varphi_t^{-1})^* d\psi_t, (\varphi_t^{-1})^* d\psi_t \big) \\
	&= \big(\varphi_t^* g^\sharp \big)(d\psi_t, d\psi_t),
\end{align*}
so we can rewrite \eqref{eval1} as
\begin{equation}\label{eq:Lbi}
	\lambda_t 
	= \int_\Omega g_t(\nabla \psi_t, \nabla \psi_t) \, \varphi_t^*dV,
\end{equation}
with $g_t = (\varphi_t^* g^\sharp)^\flat$ as above.
Differentiating and using the identity\footnote{This is not just the definition of the Lie derivative, since $\varphi_t$ need not be a one-parameter \emph{group}; see \cite[Appendix~B]{Geiges}.}
\[
	\frac{\p}{\p t} \varphi_t^*dV = \varphi_t^* (L_{X_t} dV) = \varphi_t^*( \dv X_t\,dV), 
\]
we get
\begin{align}
\begin{split}
\label{eq:LPbi}
	\lambda'  = \int_\Omega \Big\{ g'(\nabla\psi, \nabla\psi) +  2g(\nabla\psi, \nabla\psi')
	+ g( \nabla\psi, \nabla\psi) \dv X& \Big\} \,dV.
\end{split}
\end{align}
To eliminate the $\psi'$ term we observe that
\[
	2 \int_\Omega g (\nabla \psi, \psi')\,dV = 2 \lambda \int_\Omega \psi \psi' \,dV
	= - \lambda \int_\Omega \psi^2 \dv X\,dV,
\]
where the first equality holds because $\psi$ is an eigenfunction of  $-\Delta^{\cV}$ and $\psi' \in \cV$, and the second equality comes from differentiating the normalization condition $\int_{\Omega} \psi_t^2\,\varphi_t^* dV = 1$. Substituting this in \eqref{eq:LPbi} and using $g' =  - \cL_X g$ gives
\begin{equation}
\label{L'intermediate}
	\lambda' = \int_\Omega \Big\{ \big(|\nabla\psi|^2 - \lambda \psi^2\big)\dv X - 2g\big(\nabla_{\nabla\psi}X, \nabla\psi\big) \Big\}\,dV.
\end{equation}
To complete the proof of \eqref{Had1} we recall the definition of $Y$ in \eqref{Ydef} and compute
\begin{align}
\begin{split}
\label{eq:Ydiv}
	\dv Y &= 2 \Hess \psi(\nabla\psi, X) - 2\lambda \psi \nabla_X\psi + \big(|\nabla\psi|^2 - \lambda\psi^2\big) \dv X \\
	&\quad - 2 \Hess \psi(X, \nabla\psi) - 2 g\big(\nabla_{\nabla\psi} X, \nabla\psi\big) - 2 (\nabla_X\psi) \Delta \psi.
\end{split}
\end{align}
Cancelling the Hessians and using $\Delta\psi + \lambda \psi = 0$, we see that this equals the integrand in \eqref{L'intermediate}.

For the second variation, we observe that \eqref{Had1} is valid for any open, bounded domain, so we can apply it on $\Omega_t$ to get
\begin{equation}
\label{L'}
	\lambda_t' = \int_{\Omega_t} \dv Y_t\,dV,
\end{equation}
where $Y_t = \big( |\nabla u_t |^2 - \lambda_t u_t^2 \big) X_t - 2 (\nabla_Xu_t ) \nabla u_t$ and we have set $u_t = \psi_t \circ \varphi_t^{-1}$ for convenience. We will use \Cref{lem:Henrot} to differentiate \eqref{L'}.

We first compute
\begin{equation}
\label{divYt}
	\dv Y_t = \big( |\nabla u_t |^2 - \lambda_t \psi^2 \big) \dv X_t - 2g(\nabla_{\nabla u_t} X_t, \nabla u_t).
\end{equation}
The assumption $\varphi_t \in C^2\big( (-\epsilon,\epsilon), \Diff^1(N)\big)$ implies $X_t \in C^1\big( (-\epsilon,\epsilon), C^1(N)\big)$. From this and the fact that $\psi_t \in C^1\big((-\epsilon,\epsilon), W^{1,2}(\Omega) \big)$, we see that $(\dv Y_t) \circ \varphi_t$ is differentiable in $L^1(\Omega)$. Moreover, the assumptions $X \in C^2(N)$ and $\psi \in W^{2,2}(\Omega)$ together guarantee $\dv Y \in W^{1,1}(\Omega)$, so the hypotheses of \Cref{lem:Henrot} are satisfied and we conclude that
\[
	\lambda'' = \int_\Omega \Big\{ (\dv Y_t)'\big|_{t=0} + \dv\big[X(\dv Y)\big] \Big\}\,dV.
\]
We claim that
\begin{equation}
\label{claimYt}
	(\dv Y_t)'\big|_{t=0} = \dv \big[\tilde Y + 2Z - \lambda' \psi^2 X\big],
\end{equation}
from which \eqref{Had2} immediately follows.

To prove \eqref{claimYt}, we first appeal to \cite[Th\'eor\`eme~5.7.4]{Henrot}, which says $u'$ is the unique solution to \eqref{w:pde}, and hence $u' = w$.  Differentiating \eqref{divYt} at $t=0$, we obtain
\begin{align*}
	(\dv Y_t)'\big|_{t=0} &= \big( 2g(\nabla\psi, \nabla w) - 2\lambda \psi w - \lambda' \psi^2 \big) \dv X \\
	&\quad + \big(|\nabla\psi|^2 - \lambda \psi^2\big) \dv X' - 2 g(\nabla_{\nabla\psi} X', \nabla\psi) \\
	&\quad - 2g(\nabla_{\nabla\psi} X, \nabla w) - 2g(\nabla_{\nabla w} X, \nabla \psi).
\end{align*}
To see that this equals $\dv \big[ \tilde Y + 2Z - \lambda' \psi^2 X\big]$, we compute
\begin{align*}
	\dv\big[\psi^2 X\big] &= 2 \psi \nabla_X \psi + \psi^2 \dv X, \\
	\dv \tilde Y &= \big( |\nabla \psi|^2 - \lambda \psi^2 \big) \dv X' - 2g(\nabla_{\nabla\psi} X', \nabla\psi),
\end{align*}
and
\[
	\dv Z =  \big(g(\nabla\psi, \nabla w) - \lambda \psi w \big) \dv X - g(\nabla_{\nabla\psi} X, \nabla w) - g(\nabla \psi, \nabla_{\nabla w} X) + \lambda' \psi \nabla_X\psi,
\]
which completes the proof.
\end{proof}

\begin{rem}
\label{rem:reg2}
It is possible to write $(\dv Y_t) \circ \varphi_t$ explicitly in terms of the metric $g_t$ and differentiate $\lambda_t' = \int_\Omega [(\dv Y_t) \circ \varphi_t] \,\varphi_t^*dV$ directly, as was done for the first variation. After a lengthy computation this yields \eqref{Had2} without assuming $\psi \in W^{2,2}(\Omega)$. The assumption $\psi \in W^{2,2}(\Omega)$ allows us to use \Cref{lem:Henrot} and as a result simplifies the proof considerably; see \Cref{rem:reg1}.
\end{rem}

\subsection{The second variation of a simple eigenvalue on a $C^3$ domain}
\label{app:HadamardC3}
Assuming more regularity of $\pO$, we get the following analogue of Hadamard's formula for the second variation.

\begin{theorem}
\label{thm:2var}
If $\Omega$ has $C^3$ boundary, then
\[	\lambda'' 
	 = \int_{\pO} \bigg\{2 w \frac{\p w}{\p\nu} + \left(\frac{\p\psi}{\p\nu}\right)^2 \!\Big(- (X' \cdot\nu) - g(\nabla_\nu X, \nu)(X \cdot\nu) + \nabla_T(X\cdot\nu) + H(X\cdot\nu)^2  \Big)\bigg\} \, d\mu,
\]
where $T := X - (X\cdot\nu)\nu$ is the tangential part of $X$.
\end{theorem}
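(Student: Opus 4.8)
The plan is to push the interior formula \eqref{Had2} to the boundary via the divergence theorem, and then simplify the resulting boundary integrand using $\psi = 0$ on $\pO$ together with a few standard hypersurface identities.

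First I would upgrade regularity. Since $\pO$ is $C^3$ and $-\Delta\psi = \lambda\psi$, elliptic bootstrapping (from $L^2$ to $H^2$, then Schauder, using $n=2$) gives $\psi \in C^2(\overline\Omega)$, so in particular $\psi \in W^{2,2}(\Omega)$ and the hypotheses of \Cref{interior}(2) hold; here we keep the standing assumptions $\varphi_t \in C^2$ and $X \in C^2(N)$ of that proposition, which also give $X' \in C^1(N)$. Moreover $\p_\nu\psi \in C^1(\pO)$, and since $w$ in \eqref{w:pde} solves an equation with $C^2$ right-hand side and Dirichlet data $-(X\cdot\nu)\p_\nu\psi \in C^1(\pO)$, we get $w \in C^2(\overline\Omega)$. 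Because the Hessian cancellation in \eqref{eq:Ydiv} keeps $\dv Y$ first order in $\psi$, the vector field
\[
	W := X(\dv Y) + \tilde Y + 2Z - \lambda'\psi^2 X
\]
from \eqref{Had2} lies in $C^1(\overline\Omega)$, so the divergence theorem on $\Omega$ gives $\lambda'' = \int_{\pO}(W\cdot\nu)\,d\mu$, and the remaining task is to compute $W\cdot\nu$ on $\pO$.

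Next I would evaluate the terms one at a time on $\pO$, where $\psi = 0$ and hence $\nabla\psi = (\p_\nu\psi)\nu$. The term $\lambda'\psi^2 X$ vanishes; substituting as in the proof of \Cref{thm:Hadamard} gives $\tilde Y\cdot\nu = -(\p_\nu\psi)^2(X'\cdot\nu)$; from \eqref{Zdef}, using $g(\nabla\psi,\nabla w) = (\p_\nu\psi)(\p_\nu w)$ on $\pO$ and cancelling, $Z\cdot\nu = -(\p_\nu\psi)\,\nabla_X w$; and from the divergence of $Y$ computed (after the Hessian cancellation) in the proof of \Cref{interior}, $\dv Y\big|_{\pO} = (\p_\nu\psi)^2\big(\dv X - 2\,g(\nabla_\nu X,\nu)\big)$. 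Writing $X = (X\cdot\nu)\nu + T$ with $T$ tangent to $\pO$ and using the splitting $\dv X\big|_{\pO} = g(\nabla_\nu X,\nu) + H(X\cdot\nu) + \dv_{\pO}T$, with $H$ the mean curvature of $\pO$ with respect to $\nu$, the term $(X\cdot\nu)\dv Y\big|_{\pO}$ becomes $(\p_\nu\psi)^2\big(-(X\cdot\nu)g(\nabla_\nu X,\nu) + H(X\cdot\nu)^2 + (X\cdot\nu)\dv_{\pO}T\big)$.

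The step I expect to be the main obstacle is handling the $w$-term, since $Z\cdot\nu$ still carries $w$ through $\nabla_X w$. The idea is to split $\nabla_X w = (X\cdot\nu)\p_\nu w + \nabla_T w$ and differentiate the boundary condition $w\big|_{\pO} = -(X\cdot\nu)\p_\nu\psi$ along $T$, obtaining $\nabla_T w = -(\p_\nu\psi)\nabla_T(X\cdot\nu) - (X\cdot\nu)\nabla_T(\p_\nu\psi)$; together with $w\,\p_\nu w\big|_{\pO} = -(X\cdot\nu)(\p_\nu\psi)(\p_\nu w)$ and $2(\p_\nu\psi)\nabla_T(\p_\nu\psi) = \nabla_T\big((\p_\nu\psi)^2\big)$ this gives
\[
	2Z\cdot\nu = 2w\,\p_\nu w + 2(\p_\nu\psi)^2\nabla_T(X\cdot\nu) + (X\cdot\nu)\nabla_T\big((\p_\nu\psi)^2\big).
\]
Adding the three contributions, the terms involving $T$ should assemble into $\dv_{\pO}\big((\p_\nu\psi)^2(X\cdot\nu)\,T\big)$ via $\dv_{\pO}(\phi T) = \phi\,\dv_{\pO}T + \nabla_T\phi$ with $\phi = (\p_\nu\psi)^2(X\cdot\nu)$, while the remaining terms are exactly the claimed integrand. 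Since $\pO$ is a closed $1$-manifold, $\int_{\pO}\dv_{\pO}\big((\p_\nu\psi)^2(X\cdot\nu)T\big)\,d\mu = 0$, and the formula follows. Apart from the regularity bookkeeping needed to justify the divergence theorem, the only real care is in this last collection of terms and in checking that the $\nabla_T(\p_\nu\psi)$ contributions, which are absent from the final formula, are precisely what gets absorbed into the surface divergence.
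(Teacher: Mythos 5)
Your proposal follows essentially the same route as the paper's proof: apply the divergence theorem to the interior formula \eqref{Had2}, evaluate each boundary term using $\psi|_{\pO}=0$, split $\dv X$ into $\dv_{\pO}T + H(X\cdot\nu) + g(\nabla_\nu X,\nu)$, handle the $w$-term via $\nabla_X w = \nabla_T w + (X\cdot\nu)\p_\nu w$ and the tangential derivative of the boundary condition, and absorb the leftover tangential terms into $\dv_{\pO}\big[(\p_\nu\psi)^2(X\cdot\nu)T\big]$, which integrates to zero. The only inaccuracy is the regularity bookkeeping: Schauder with Dirichlet data $-(X\cdot\nu)\p_\nu\psi$, which is only $C^{1,\alpha}(\pO)$, gives $w\in C^{1,\alpha}(\overline\Omega)$ rather than $C^2(\overline\Omega)$, so $W$ need not be $C^1$ up to the boundary; but this is harmless, since $w\in H^2(\Omega)$ (as the data lies in $H^{3/2}(\pO)$) makes $W\in W^{1,1}(\Omega)$, which is exactly the regularity the paper uses to justify the divergence theorem via \eqref{div:weak}.
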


This result is well known; see, for instance, \cite[eq.~(150)]{G10} or \cite[p.~226]{Henrot}. While we do not use it in this paper (since we are interested in domains with corners), some of the intermediate calculations will be useful in \Cref{ssec:pfHess1}, when we prove \Cref{thm:Hess1}. Moreover, the derivation of $\lambda''$ via the method of interior variations may be of independent interest.

\begin{rem}
The deformed domain $\varphi_t(\Omega)$ depends on both the normal and tangential parts of $X$, but the tangential contribution only shows up at second order; cf. \eqref{smoothHad1} where it is absent. In \cite{G10}, the deformation is chosen to be purely normal, so $T$ does not appear.
\end{rem}

\begin{proof}[Proof of \Cref{thm:2var}]
Since $\Omega$ has $C^3$ boundary, we have $\psi \in H^3(\Omega)$. It follows from \eqref{w:pde} that $w\big|_{\pO} \in H^{3/2}(\pO)$ and hence $w \in H^2(\Omega)$. This means the vector field $X(\dv Y) +  \tilde Y + 2Z - \lambda' \psi^2 X$ is in $W^{1,1}(\Omega)$. Denoting this by $W$, we can apply the divergence theorem to \eqref{Had2} to obtain $\lambda'' = \int_{\pO} (W\cdot\nu)\,d\mu$.

Using the fact that $\psi$ vanishes on the boundary, as in the proof of \Cref{thm:Hadamard}, we find
\begin{align*}
	\dv Y\big|_{\pO} &= \left(\frac{\p\psi}{\p\nu}\right)^2 \big( \dv X - 2 g(\nabla_\nu X, \nu)\big), \\
	\tilde Y \cdot\nu &= - \left(\frac{\p\psi}{\p\nu}\right)^2 (X' \cdot\nu) \\
	Z \cdot \nu &= -\frac{\p\psi}{\p\nu} \nabla_X w = w\frac{\p w}{\p\nu} + \frac{\p\psi}{\p\nu} \nabla_T \left( (X\cdot\nu) \frac{\p\psi}{\p\nu}\right) 
\end{align*}
where for the $Z$ term we have decomposed $\nabla _X w = \nabla_T w + (X\cdot\nu) \frac{\p w}{\p\nu}$ and used the fact that $w\big|_{\pO} = - (X\cdot\nu) \frac{\p\psi}{\p\nu}$. We thus have the boundary term
\begin{equation}\label{bdry1}
	W\cdot\nu = 2 w\frac{\p w}{\p\nu} + 2 \frac{\p\psi}{\p\nu} \nabla_T \left( (X\cdot\nu) \frac{\p\psi}{\p\nu}\right) + \left(\frac{\p\psi}{\p\nu}\right)^2 \Big(- (X' \cdot\nu) + \big(\dv X - 2 g(\nabla_\nu X, \nu)\big)(X \cdot\nu) \Big).
\end{equation}
Along $\pO$ we can decompose the divergence of $X$ into tangential and normal components, obtaining
\[
	\dv X = \dv_{\pO} X + g(\nabla_\nu X, \nu) =  \dv_{\pO} T + H(X\cdot\nu) + g(\nabla_\nu X, \nu),
\]
and so
\eqref{bdry1} becomes
\begin{align}
\begin{split}\label{bdry2}
	W\cdot\nu &= 2 w \frac{\p w}{\p\nu} + 2 \frac{\p\psi}{\p\nu} \nabla_T \left( (X\cdot\nu) \frac{\p\psi}{\p\nu}\right) \\
	&\qquad + \left(\frac{\p\psi}{\p\nu}\right)^2 \Big(- (X' \cdot\nu) + \big(\dv_{\pO} T - g(\nabla_\nu X, \nu)\big)(X \cdot\nu) + H(X\cdot\nu)^2  \Big) .
\end{split}
\end{align}
Finally, we observe that
\[
	2 \frac{\p\psi}{\p\nu} \nabla_T \left( (X\cdot\nu) \frac{\p\psi}{\p\nu}\right) + \left(\frac{\p\psi}{\p\nu}\right)^2 (X\cdot\nu)\dv_{\pO} T 
	= \dv_{\pO} \left[ \left(\frac{\p\psi}{\p\nu}\right)^2 (X\cdot\nu) T \right] + \left(\frac{\p\psi}{\p\nu}\right)^2 \nabla_T(X\cdot\nu)
\]
and hence
\[
	W \cdot\nu = 2 w \frac{\p w}{\p\nu}  + \left(\frac{\p\psi}{\p\nu}\right)^2 \Big(- (X' \cdot\nu) - g(\nabla_\nu X, \nu)(X \cdot\nu) + \nabla_T(X\cdot\nu) + H(X\cdot\nu)^2  \Big) + \dv_{\pO}(\ast).
\]
Integrating over $\pO$ completes the proof, since $\int_{\pO}\dv_{\pO}(\ast)\,d\mu = 0$.
\end{proof}


\section{Computing the Hessian at a critical partition}
\label{sec:Hessian}
We are now ready to characterize the critical points of $\lap$ and compute its Hessian, thus proving \Cref{thm:submanifold,thm:critical,thm:Hess1}.

\subsection{Equipartitions: proof of \Cref{thm:submanifold}}
\label{ssec:eq}
Assuming $P = \{\Omega_i\}_{i=1}^k$ is an equipartition with corners, we now show that $\EsM \subset \DsM$ is a smooth submanifold, thus proving \Cref{thm:submanifold}. The proof is almost identical to that of \cite[Proposition~8]{BKS12}, so we just give a summary.

Consider the smooth map
\begin{equation}
	\Xi \colon \DsM \to \bbR^k, \qquad \Xi(\varphi) = \Big(  \lambda_1\big(\varphi(\Omega_1)\big), \ldots, \lambda_1\big(\varphi(\Omega_k)\big) \Big),
\end{equation}
defined so that $\EsM \subset \DsM$ is the preimage of the diagonal in $\bbR^k$. The subdomains $\Omega_i$ are regular enough for the ``standard" boundary version of Hadamard's formula to apply, as was shown in \Cref{thm:Hadamard}. Therefore, one can use this formula, exactly as in \cite{BKS12}, to prove that $\Xi$ is transversal to the diagonal in $\bbR^k$ and hence its preimage is a smooth submanifold of codimension $k-1$.

It follows that $\lap(\varphi) = \Lambda\big(\varphi(P)\big)$ is a smooth function on $\EsM$, since each $\lambda_1\big(\varphi(\Omega_i)\big)$ depends smoothly on $\varphi$. Since $\lambda_1\big(\varphi(\Omega_i)\big)$ does not depend on $i$, the same is true of its derivative, so we can use Hadamard's formula to conclude that
\begin{equation}
\label{TEsM}
	\int_{\pO_1} (X \cdot \nu_1) \left(\frac{\p \psi_1}{\p \nu_1}\right)^2 d\mu = 
	\cdots = \int_{\pO_k} (X \cdot \nu_k)\left(\frac{\p \psi_k}{\p \nu_k}\right)^2 d\mu
\end{equation}
for all $X \in T_{\id} \EsM$.

\begin{rem}
\label{rem:criticalH}
Given $X \in T_{\id} \DsM$, if each of the integrals in \eqref{TEsM} vanishes, then $X \in T_{\id} \EsM$. Conversely, if $P$ is critical, then each of the integrals in \eqref{TEsM} vanishes for all $X \in T_{\id} \EsM$.
\end{rem}

\subsection{Critical partitions: proof of \Cref{thm:critical}}
Similar to the proof of \Cref{thm:submanifold} in \Cref{ssec:eq}, this proof only depends on Hadamard's formula and hence is identical to the proof of \cite[Theorem~9]{BKS12}. That is, one uses Lagrange multipliers to show that the identity is a critical point of $\lap$ if and only if it is a critical point of the functional
\[
	\lambda_{\mathbf c} \colon \DsM  \lra \bbR, \qquad \lambda_{\mathbf c}(\varphi) = \sum_{i=1}^k c_i \lambda_1\big(\varphi(\Omega_i)\big)
\]
for some choice of non-negative numbers $c_1, \ldots, c_k$ with $c_1 + \cdots + c_k = 1$. An application of Hadamard's formula, from \Cref{thm:Hadamard},  then shows that the identity is a critical point of $\lambda_{\mathbf c}$ if and only if \eqref{eq:normals} holds with $a_i^2 = c_i$.

\subsection{The Hessian: proof of \Cref{thm:Hess1}}
\label{ssec:pfHess1}
Next, we compute the Hessian at a critical partition. By the polarization identity, it suffices to compute the diagonal entries.

\begin{prop}
\label{prop:Hess}
If $P$ is a critical partition and $X \in T_{\id} \EsM$, then
\begin{equation}
\label{critHess}
	\Hess\lap(\id)(X,X) 
	= 2 \sum_{i=1}^k \int_{\Omega_i} \big(|\nabla u_i|^2 - \lambda u_i^2\big)\,dV,
\end{equation}
where $u_i \in H^1(\Omega_i)$ is the unique solution to
\begin{equation}
\label{ui:pde}
	\Delta u_i + \lambda u_i = 0, \qquad u_i\big|_{\pO_i} = \rho (X \cdot\nu_i), \qquad \int_{\Omega_i} u_i \psi_i \,dV = 0,
\end{equation}
with $\rho$ defined by \eqref{rhodef}.
\end{prop}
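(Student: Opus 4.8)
The plan is to combine the interior second variation formula \eqref{Had2} from \Cref{interior} with the equipartition constraint, evaluated along a one-parameter family $\varphi_t = \exp\circ(tX)$ through the identity. Since $\lap$ is smooth on the submanifold $\EsM$ (by \Cref{thm:submanifold}) and the identity is a critical point (since $P$ is a critical partition), the Hessian $\Hess\lap(\id)$ is a well-defined bilinear form on $T_{\id}\EsM$, and it can be computed by restricting to any such geodesic family; the second covariant derivative and the ordinary second derivative along the curve agree at a critical point. Because $\lap(\varphi_t) = \Lambda(\varphi_t(P)) = \lambda_1(\varphi_t(\Omega_i))$ for \emph{every} $i$ (equipartition), we may apply \eqref{Had2} on each $\Omega_i$ with eigenfunction $\psi_i$, eigenvalue $\lambda = \Lambda(P)$, and vector field $X$ (taking $X' = 0$ since $\varphi_t = \exp\circ(tX)$ gives $X_t = X\circ$(something) with vanishing $t$-derivative at $t=0$ in the relevant sense — more precisely one checks $X_t' \big|_{t=0}$ contributes a term involving $\nabla_X X$ that integrates to zero by the first-order criticality, as in \Cref{thm:2var}'s proof). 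Summing the resulting expressions, possibly with a choice of coefficients $c_i = a_i^2$ coming from the Lagrange multiplier description in the proof of \Cref{thm:critical}, should yield \eqref{critHess}.

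First I would record that, by \eqref{Had2} applied on $\Omega_i$,
\[
	\Hess\lap(\id)(X,X) = \int_{\Omega_i} \dv\big[ X(\dv Y_i) + \tilde Y_i + 2 Z_i - \lambda' \psi_i^2 X\big]\,dV
\]
for each $i$, where $Y_i, \tilde Y_i, Z_i$ are as in \eqref{Ydef}--\eqref{Zdef} with $\psi$ replaced by $\psi_i$, and $w_i$ solves the boundary value problem \eqref{w:pde} with $w_i\big|_{\pO_i} = -(X\cdot\nu_i)\tfrac{\p\psi_i}{\p\nu_i}$. Next I would multiply the $i$-th identity by $c_i = a_i^2$ and sum over $i$; since $\sum c_i = 1$ this gives $\Hess\lap(\id)(X,X)$ again on the left. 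On the right, I would use the divergence theorem on each (piecewise smooth, hence Lipschitz) $\Omega_i$ — the regularity $s>3$ guarantees $X\in C^2$ and $\psi_i\in H^3$ near the smooth boundary, so the relevant vector fields lie in $W^{1,1}(\Omega_i)$ — reducing everything to boundary integrals over $\pO_i$, exactly the computation carried out in the proof of \Cref{thm:2var}. The key simplification is that $\psi_i$ vanishes on $\pO_i$, so $\nabla\psi_i\big|_{\pO_i} = \tfrac{\p\psi_i}{\p\nu_i}\nu_i$; the boundary term collapses to an expression involving only $w_i$, $\tfrac{\p w_i}{\p\nu_i}$, $\tfrac{\p\psi_i}{\p\nu_i}$ and $X\cdot\nu_i$. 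One then sets $u_i := w_i + \text{(correction)}$ — concretely, $u_i$ is the solution of \eqref{ui:pde}, whose boundary data $\rho(X\cdot\nu_i) = |a_i\tfrac{\p\psi_i}{\p\nu_i}|(X\cdot\nu_i)$ is, up to the sign $\chi_i$ and the scalar $a_i$, a rescaling of $w_i$'s boundary data — and checks that $\sum_i c_i\big(\text{boundary terms in }\psi_i,w_i\big)$ equals $2\sum_i \int_{\Omega_i}(|\nabla u_i|^2 - \lambda u_i^2)\,dV$, using the weak formulation of \eqref{ui:pde} to convert the bulk integral into the boundary pairing $\int_{\pO_i} u_i \tfrac{\p u_i}{\p\nu_i}$. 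The constraint $a_1^2 + \cdots + a_k^2 = 1$ from \Cref{thm:critical} absorbs the $c_i$ weights cleanly because $\rho\big|_{\Sigma_i}$ already carries the factor $a_i$.

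The main obstacle I expect is bookkeeping the tangential/first-order-deformation terms and confirming they vanish. In \eqref{Had2} the term $\tilde Y$ involves $X'$, and along the exponential family $X' = \tfrac{\p}{\p t}X_t\big|_{t=0}$ is not zero in general (it equals $-\nabla_X X$ or a similar expression depending on the connection used in $\exp$); its divergence integrates to a boundary term $-\int_{\pO_i}(\tfrac{\p\psi_i}{\p\nu_i})^2 (X'\cdot\nu_i)\,d\mu$, and one must argue this sums to zero after weighting by $c_i$ — precisely the statement that the first variation of $\lambda_{\mathbf c}$ vanishes, i.e. the criticality condition \eqref{eq:normals}. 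Likewise, the tangential part $T = X - (X\cdot\nu_i)\nu_i$ of $X$ on $\pO_i$ produces divergence-of-tangential-vector-field terms which integrate to zero on each smooth boundary component, but \emph{a priori} there could be contributions from corner points where the smooth pieces of $\pO_i$ meet; here one uses that $X\in T_{\id}\EsM$ forces $X$ to be tangent to the smooth part of $\p M$ (though not necessarily to $\Sigma$), and that $\tfrac{\p\psi_i}{\p\nu_i}$ vanishes at the corners, so the would-be boundary contributions at corner points vanish. Managing this corner analysis carefully — and ensuring the divergence theorem is legitimately applicable despite the corners, which is why \Cref{thm:Hadamard} and the piecewise-$C^2$-with-convex-corners remark after it are invoked — is the delicate part; the rest is the algebraic identity already essentially contained in the proof of \Cref{thm:2var}.
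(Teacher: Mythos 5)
Your overall strategy is the same as the paper's: apply the interior second-variation formula \eqref{Had2} subdomain by subdomain, weight by the coefficients $a_i^2$ from \Cref{thm:critical}, reduce the leftover terms to boundary integrals, cancel them pairwise across interfaces using the criticality condition \eqref{eq:normals} together with $\nu_i=-\nu_j$, and identify $u_i$ with $\pm a_i w_i$. But there are two concrete gaps in how you set this up. First, the choice of curve: you work along $\varphi_t=\exp\circ(tX)$, which lies in $\DsM$ but has no reason to stay in $\EsM$, so the identity $\lap(\varphi_t)=\lambda_1(\varphi_t(\Omega_i))$ for \emph{every} $i$ — which is what lets you equate $\Hess\lap(\id)(X,X)$ with the right-hand side of \eqref{Had2} on a single $\Omega_i$ — is not available. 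Along a curve leaving $\EsM$, the second derivatives of the individual $\lambda_1(\varphi_t(\Omega_i))$ pick up first-order terms through the curve's acceleration (the identity is \emph{not} a critical point of each $\lambda_1(\varphi(\Omega_i))$ on $\DsM$), and these differ with $i$; only the $a_i^2$-weighted combination, i.e.\ the Lagrangian $\lambda_{\mathbf c}$, has a well-defined second derivative there, and invoking that requires a genuine second-order Lagrange-multiplier argument that you only gesture at. The paper's fix is simpler: take an arbitrary $C^2$ curve \emph{in} $\EsM$ with $\varphi_0'=X$ (possible by \Cref{thm:submanifold}), so that the per-domain identity is exact, and then insert the weights $a_i^2$ using $\sum a_i^2=1$.

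Second, the corner analysis is under-specified and your proposed route runs into exactly the regularity loss the paper is organized to avoid. You plan to push everything to boundary integrals (as in \Cref{thm:2var}) and then recover the bulk quadratic form of $u_i$ from the pairing $\int_{\pO_i}u_i\,\p u_i/\p\nu_i$; but $u_i$ (equivalently $w_i$ from \eqref{wi:pde}) is only $H^1(\Omega_i)$ with boundary data vanishing merely to first order at the corners, so $\p u_i/\p\nu_i$ need not lie in $L^2(\pO_i)$ and this pairing is not justified. The paper instead extracts $2(|\nabla w_i|^2-\lambda w_i^2)=2\dv(w_i\nabla w_i)$ \emph{before} integrating by parts, keeps it as a bulk integral, and applies the divergence theorem only to the remainder $\tilde W_i$ multiplied by cutoffs $\phi_n$ that vanish near the corners (\Cref{lem:H1approx}); the limit $\phi_n\to 1$ in $H^1$ is then controlled because $\tilde W_i$ and $\dv\tilde W_i$ are in $L^2(\Omega_i)$, which uses $\psi_i\in C^1(\bar\Omega_i)$ (Azzam) rather than the convex-corner $H^2$ remark you invoke. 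Saying that ``$\p\psi_i/\p\nu_i$ vanishes at the corners'' is not by itself enough to discard the corner contributions; you need this cutoff-and-limit mechanism (or an equivalent) to make the divergence theorem legitimate. With the curve taken in $\EsM$ and this approximation step supplied, your plan becomes the paper's proof.
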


\begin{proof}[Proof of \Cref{prop:Hess}]
Fix $X \in T_{\id} \EsM$ and let $\varphi_t$ be a $C^2$ curve in $\EsM$ with $\varphi_0 = \id$ and $\varphi_0' = X$.
Since $\varphi_t$ is a family in $\EsM$, we have $\lap(\varphi_t) = \lambda_1(\varphi_t(\Omega_i))$ for each $i$, so \Cref{interior} gives
\begin{equation}
\label{proof:Hess1}
	\Hess\lap(\id)(X,X)  = \frac{d^2 \lap(\varphi_t)}{dt^2}\bigg|_{t=0} = \int_{\Omega_i} \dv W_i \,dV.
\end{equation}
Here $W_i = X(\dv Y_i) +  \tilde Y_i + 2Z_i$ with $Y_i$, $\tilde Y_i$ and $Z_i$ defined as in \eqref{Ydef}, \eqref{Ytildedef} and \eqref{Zdef}, with $\psi_i$ in place of $\psi$ and with $w_i \in H^1(\Omega_i)$ denoting the unique solution to
\begin{equation}
\label{wi:pde}
	\Delta w_i + \lambda w_i = 0, \qquad w_i\big|_{\pO_i} = - (X \cdot\nu_i) \frac{\p\psi_i}{\p\nu_i}, \qquad \int_{\Omega_i} w_i \psi_i \,dV = 0.
\end{equation}
Choosing $\{a_i\}$ as in \Cref{thm:critical}, we can rewrite \eqref{proof:Hess1} as
\begin{equation}
\label{proof:Hess2}
	\Hess\lap(\id)(X,X) = \sum_{i=1}^k a_i^2 \int_{\Omega_i} \dv W_i\, dV.
\end{equation}

We next use the fact that $\dv(w_i\nabla w_i) = |\nabla w_i|^2 - \lambda w_i^2$ to write
\[
	\dv W_i = 2\big( |\nabla w_i|^2 - \lambda w_i^2\big) + \dv\underbrace{\big[ X(\dv Y_i) + \tilde Y_i + 2(Z_i - w_i \nabla w_i) \big]}_{ = \tilde W_i}.
\]
Depending on the signs of $a_i$ and $\psi_i$, either $a_i w_i$ or $-a_i w_i$ will be the unique solution to \eqref{ui:pde}, and so either way we have
\begin{equation}
\label{proof:Hess3}
	\Hess\lap(\id)(X,X) = 2 \sum_{i=1}^k \int_{\Omega_i} \big(|\nabla u_i|^2 - \lambda u_i^2\big)\,dV
	+ \sum_{i=1}^k a_i^2 \int_{\Omega_i} \dv \tilde W_i\, dV.
\end{equation}

To complete the proof we need to show that $\sum a_i^2 \int_{\Omega_i} \dv \tilde W_i\, dV = 0$. 
We use \Cref{lem:H1approx}  to choose a sequence of functions $\{\phi_n\}$ in $C^\infty(M)$, each vanishing near the corner points of $\pO$, with $\phi_n \to 1$ in $H^1(M^o)$. Since each $\pO_i$ is piecewise smooth, we get that $\psi_i$ and $w_i$ are $H^3$ and $H^2$ up to the boundary, except near the corners. It follows that $\phi_n \tilde W_i \in W^{1,1}(\Omega_i)$, so we can apply the divergence theorem to obtain
\begin{equation}
\label{Wdiv1}
	\int_{\Omega_i} \big(\phi_n \dv \tilde W_i + \nabla_{\tilde W_i} \phi_n\big)\,dV = \int_{\pO_i} \phi_n (\tilde W_i \cdot \nu_i) \,d\mu
	= \int_{\Sigma_i} \phi_n (\tilde W_i \cdot \nu_i) \,d\mu.
\end{equation}
Evaluating the boundary terms (cf. the proof of \Cref{thm:2var}), we get
\begin{equation}
\label{Wbdry2}
	\tilde W_i \cdot\nu = \left(\frac{\p\psi_i}{\p\nu_i}\right)^2 \Big(- (X' \cdot\nu_i) + \big(\dv X - 2 \nabla X(\nu_i,\nu_i)\big)(X \cdot\nu_i)  \Big) + 2 \frac{\p\psi_i}{\p\nu_i} \nabla_T \left( (X\cdot\nu_i) \frac{\p\psi_i}{\p\nu_i}\right)
\end{equation}
away from the corners, where $T = X - (X\cdot\nu)\nu$ is the tangential part of $X$. Since $P$ is a critical partition, \Cref{thm:critical} gives
\[
	a_i^2 \left(\frac{\p \psi_i}{\p \nu_i} \right)^2 = a_j^2 \left( \frac{\p \psi_j}{\p \nu_j} \right)^2
\]
on each $\Sigma_i \cap \Sigma_j$. Using $\nu_i = - \nu_j$ for $i \neq j$, we conclude from \eqref{Wbdry2} that $a_i^2 (\tilde W_i \cdot \nu_i) = - a_j^2 (\tilde W_j \cdot \nu_j)$, so from \eqref{Wdiv1} we get
\[
	\sum_{i=1}^k a_i^2 \int_{\Omega_i} \big(\phi_n \dv \tilde W_i + \nabla_{\tilde W_i} \phi_n\big)\,dV = 0
\]
because the boundary terms cancel in pairs. Finally, we observe that $\psi_i \in C^1(\bar\Omega_i)$, by \cite[Theorem~1]{Azzam1}. This and the fact that $w_i \in H^1(\Omega_i)$ together imply $\tilde W_i$ and $\dv \tilde W_i$ are both in $L^2(\Omega_i)$. Since $\phi_n \to 1$ in $H^1(\Omega_i)$, we get
\begin{equation}
	\sum_{i=1}^k a_i^2 \int_{\Omega_i} \dv \tilde W_i\, dV = \lim_{n \to \infty} \sum_{i=1}^k a_i^2 \int_{\Omega_i} \big(\phi_n \dv \tilde W_i + \nabla_{\tilde W_i} \phi_n\big)\,dV = 0,
\end{equation}
as was to be shown.
\end{proof}

We are now ready to prove our main formula for the Hessian of $\lap$.

\begin{proof}[Proof of \Cref{thm:Hess1}]
Fix $i$ and let $u_i \in H^1(\Omega_i)$ denote the unique solution to \eqref{ui:pde}. Since $\nu_i = \chi_i \nu$
and $X \cdot\nu\big|_{\pO_i\backslash\Sigma_i} = 0$ for $X \in T_{\id} \EsM$, we conclude that
\begin{equation}
	u_i\big|_{\Sigma_i} = \chi_i \rho (X \cdot\nu), \qquad u_i\big|_{\pO_i \setminus \Sigma_i} = 0,
\end{equation}
therefore $u_i$ is the unique solution to \eqref{uiequation}, with $f = \rho (X \cdot\nu)$, for which $\int_{\Omega_i} u_i \psi_i \,dV = 0$. From the definition of $\form$ in \eqref{aformdef} we then have
\begin{equation}
	\form\big(\rho(X\cdot\nu), \rho(X \cdot\nu) \big) = \sum_{i=1}^k \int_{\Omega_i} \big(|\nabla u_i|^2 - \lambda u_i^2\big)\,dV,
\end{equation}
which is half of the right-hand side of \eqref{critHess}.
\end{proof}


\section{Comparing indices}
\label{sec:comparing}

Our main result, \Cref{thm:equality}, is an immediate consequence of \Cref{thm:BCHS} (which was proved in \cite{BCHS}) and \Cref{thm:index}, which we prove in this section. The proof relies on a technical approximation result. Letting $F$ denote the corner points of $P$ and recalling that $M^o$ denotes the interior of $M$, we define
\begin{equation}
\label{T0infty}
	\cT_0^\infty = \big\{ X \in T_{\id} \EsM : X \in \mathfrak{X}(N) \text{ and } \supp X \subset M^o \backslash F \big\},
\end{equation}
that is, the set of smooth vector fields that vanish near $\p M$ and all of the corner points of $P$.

\begin{theorem}
\label{thm:dense}
Let $P$ be a critical partition and fix a unit normal vector field $\nu$ along its boundary set, as in  \Cref{thm:Hess1}. Then $\{ \rho(X\cdot\nu) : X \in \cT_0^\infty \}$ is a dense subset of $\dom(\form)$.
\end{theorem}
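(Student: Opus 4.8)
The plan is to show that the set $\{\rho(X\cdot\nu) : X \in \cT_0^\infty\}$ is dense in $\dom(\form) = \wtH(\Sigma)\cap \Snu$ with respect to the $\wtH(\Sigma)$-norm, since by \eqref{abound} the form $\form$ is bounded on this space, so density in the $\wtH(\Sigma)$-topology is exactly what is needed to conclude that the Morse index of $\Hess\lap$ (computed via the quadratic form $f \mapsto \form(f,f)$ restricted to this subspace) agrees with $n_-(\DtnNu)$. The strategy has two stages: first understand the range of the linear map $X \mapsto \rho(X\cdot\nu)$ for $X$ ranging over all of $T_{\id}\EsM$ (not just $\cT_0^\infty$), and second reduce to the smaller class $\cT_0^\infty$ by a cutoff argument near the corner points.

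For the first stage, I would argue as follows. Given a target $f \in \wtH(\Sigma)\cap\Snu$, I want to produce a vector field $X$ with $\rho(X\cdot\nu)$ close to $f$ in $\wtH(\Sigma)$. On each smooth segment of $\Sigma_i$ the weight $\rho = |a_i \p_{\nu_i}\psi_i|$ is smooth and strictly positive away from the corners (by Hopf's lemma / the fact that $\psi_i$ is a positive groundstate, combined with the $C^1$-up-to-the-boundary regularity from \cite{Azzam1}), so dividing is harmless away from the corners: one sets $X\cdot\nu = \chi_i f/\rho$ there, which makes sense once we have cut $f$ off near the corners. The genuine content is (a) checking that a normal component prescribed this way can be realized by an honest vector field $X \in T_{\id}\EsM$, i.e.\ one satisfying the integral constraints in \Cref{thm:submanifold}; here \Cref{rem:criticalH} is crucial, since for a \emph{critical} partition those constraints reduce to each integral $\int_{\Sigma_i}(X\cdot\nu_i)(\p_{\nu_i}\psi_i)^2\,d\mu$ vanishing, and because $\rho^2 = a_i^2(\p_{\nu_i}\psi_i)^2$ the constraint on $f$ being in $\Snu$, namely $\int_{\Sigma_i}\chi_i f\,\p_{\nu_i}\psi_i\,d\mu = 0$, translates precisely into the required vanishing — so $X\cdot\nu = \chi_i f/\rho$ automatically lands $X$ in $T_{\id}\EsM$; and (b) checking that the map $f \mapsto X\cdot\nu = f/\rho$ (away from corners) is continuous from $\wtH(\Sigma)$ into the appropriate trace space, which is where the structure of $\wtH(\Sigma)$ versus $H^{1/2}(\Sigma_i)$ matters: the extension-by-zero condition in $\wtH$ is exactly what allows $f/\rho$ to be controlled near the corner, because $f$ must decay there in an $H^{1/2}$-compatible way.

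For the second stage — passing from general $X \in T_{\id}\EsM$ to $X \in \cT_0^\infty$ — I would use a cutoff. Fix a family of smooth cutoffs $\zeta_\varepsilon$ equal to $1$ outside $\varepsilon$-balls around the corner points and $0$ in $\varepsilon/2$-balls, and replace $X$ by $\zeta_\varepsilon X$; one also mollifies to gain smoothness and truncates near $\p M$. The key estimate is that $\rho(\zeta_\varepsilon X\cdot\nu) \to \rho(X\cdot\nu)$ in $\wtH(\Sigma)$ as $\varepsilon \to 0$. Since $\rho$ vanishes at the corners, the "error" $\rho((1-\zeta_\varepsilon)X\cdot\nu)$ is supported in a shrinking neighborhood of the corners where $\rho$ is small, and one controls its $\wtH(\Sigma)$-norm using the local structure of $\wtH$ near a corner (again the extension-by-zero/Hardy-inequality-type bound: functions in $\wtH(\Sigma_i)$ supported within distance $\varepsilon$ of the corner have norm tending to zero, provided the function itself is bounded, which $\rho(X\cdot\nu)$ is since $\rho \in C^0$ and $X\cdot\nu$ is bounded). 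One also needs $\zeta_\varepsilon X$ to remain in $T_{\id}\EsM$; multiplying by a cutoff that is radial around each corner does not disturb tangency to $\p M$, but it may disturb the integral constraints, so a small correction term (supported away from the corners, in the span of finitely many fixed admissible directions) must be added to restore them — this is a finite-dimensional adjustment whose size is controlled by how much $\zeta_\varepsilon$ changed the relevant integrals, which is $O(\rho\cdot\varepsilon) = o(1)$.

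The main obstacle I anticipate is the interplay between the vanishing of $\rho$ at the corners and the precise norm on $\wtH(\Sigma)$: one must show both that dividing by $\rho$ is a bounded operation on the relevant (cutoff) subspace of $\wtH(\Sigma)$ and that multiplying by $\rho$ absorbs the corner-supported errors, and these pull in opposite directions. Quantitatively this comes down to matching the vanishing rate of $\p_{\nu_i}\psi_i$ at a corner of opening angle $\theta$ (which behaves like the distance to the corner to the power $\pi/\theta - 1$ or its sign-reversed analogue, depending on convexity) against the Hardy-type weight implicit in the $\wtH(\Sigma_i) \subsetneq H^{1/2}(\Sigma_i)$ inclusion. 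I expect the cleanest route is to localize near a single corner, work in model polar coordinates, and reduce everything to a one-dimensional weighted $H^{1/2}$ estimate on an interval with the weight degenerating at one endpoint; the bound \eqref{abound} then lets one transfer the conclusion back to $\form$ and hence to the Morse index, completing the proof of \Cref{thm:index} and thereby of \Cref{thm:equality}.
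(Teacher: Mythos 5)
There is a genuine gap, and it sits exactly at the point the paper flags as delicate: the claim that you can discard the part of $f$ near the corner points at negligible cost in $\wtH(\Sigma)$. Your two supporting claims for this are both false. First, ``bounded function supported within distance $\varepsilon$ of a corner has small $\wtH$-norm'' fails because the one-dimensional $\dot H^{1/2}$ seminorm is scale invariant: a bump of height $1$ and width $\varepsilon$ has $H^{1/2}$-seminorm bounded below uniformly in $\varepsilon$, so multiplying $f$ by a scale-$\varepsilon$ cutoff does \emph{not} converge to $f$ in $\wtH(\Sigma)$ unless $f$ already vanishes at the corner (and elements of $\dom(\form)$ need not even be bounded). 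Second, the asserted Hardy-type decay ``forced by extension-by-zero'' is simply absent at the interior corner points: for an interior crossing (e.g.\ the center of the $(2,2)$ rectangle partition), that point is \emph{interior} to $\Sigma_i \subset \pO_i$, so membership in $\wtH(\Sigma_i)$ imposes no decay there at all, and indeed the paper's examples (the $\DtnNu$-eigenfunction \eqref{22DtNeigenfunction}, and the kernel element in the nullity remark) are nonzero at such corners. Even at corners on $\p M$, where $\widetilde H^{1/2}$ does carry a $d^{-1/2}$-type Hardy weight, your proposed ``rate matching'' cannot close: $\rho$ vanishes like $d^{\alpha}$ with $\alpha\ge 1$ in the relevant model cases, so $f\mapsto f/\rho$ is not bounded from $\wtH(\Sigma)$ into any usable trace space. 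In short, both the division by $\rho$ in your Stage 1 and the cutoff estimate in your Stage 2 break down precisely where the theorem is hard.

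The paper's proof avoids this by reversing the order of operations: it never divides a general $f$ by $\rho$. One first proves that functions vanishing \emph{identically} near the corner set $F$ are dense — this is \Cref{thm:approx} and \Cref{cor:approxSigma}, whose engine is \Cref{lem:H1approx}: a point has zero capacity for $H^1$ in two dimensions (no nonzero $H^{-1}(\bbR^2)$ distribution is supported at a point), which is what replaces your fixed-scale cutoffs; the corner analysis also needs the anti-continuity structure and the unfolding trick (\Cref{unfold}) when an odd number of segments meet. Then \Cref{lem:fsmooth} restores the finitely many segment integrals by adding small bumps — this part of your plan (the finite-dimensional correction, and the observation that criticality converts $f\in\Snu$ into the tangency constraint \eqref{tan0} for $X$) is sound and matches the paper. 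Finally, with $h\in C^\infty_0(\Sigma\backslash F)$ in hand, $X=\rho^{-1}h\,\nu$ is a smooth field supported away from $F$ and $\rho(X\cdot\nu)=h$ exactly, so no boundedness of $1/\rho$ on $\wtH(\Sigma)$ is ever needed. To repair your argument you would have to import this density-away-from-points step (capacity/log-cutoff argument plus the unfolding at odd corners); the weighted-$H^{1/2}$/Hardy route you sketch does not work.
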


\Cref{thm:index} is a consequence of the weaker (but still nontrivial) result that $\{ \rho(X\cdot\nu) : X \in T_{\id}  \EsM \}$ is dense in $\dom(\form)$. The full strength of \Cref{thm:dense} is needed, however, to prove \Cref{thm:nonbi}. See \Cref{sec:proofnonbi}, in particular the proof of \Cref{prop:Hessmin}.

\begin{rem}
Letting $\mathcal F = \{ \rho(X\cdot\nu) : X \in T_{\id}  \EsM \}$, we note the following.
\begin{enumerate}
	\item $\mathcal F$ is a proper subset of $\dom(\form)$ even when $\Sigma$ is smooth. This is because functions in $\mathcal F$ are of class $H^{s-1/2}$ for some fixed $s>3$, whereas functions in $\dom(\form)$ are only $H^{1/2}$.
	\item When $\Sigma$ is smooth, $\mathcal F$ is large enough to contain all of the eigenfunctions of $\DtnNu$. This was used in \cite[Theorem~6]{BCCM2} to prove \Cref{thm:index} for smooth partitions.
	\item When $P$ has corners, every function in $\mathcal F$ vanishes at the corner points (because $\rho$ does). As a result, the eigenfunctions of $\DtnNu$ are not necessarily contained in $\mathcal F$, but they can be approximated by functions in $\mathcal F$, using \Cref{thm:dense}.
\end{enumerate}
\end{rem}

The argument is rather delicate, since we are approximating functions in $\dom(\form)$ with functions that vanish near the corner points. This is possible because $\dom(\form)$ is equipped with the $H^{1/2}$ norm; no such approximation exists in $H^{1/2+\epsilon}$ for $\epsilon>0$.

In \Cref{sec:equality} we explain how \Cref{thm:dense} implies \Cref{thm:index}. In \Cref{sec:approx} we establish some approximation results and in \Cref{sec:inj} we use them to prove \Cref{thm:dense}.

\subsection{Equality of Morse indices: proof of \Cref{thm:index}}
\label{sec:equality}

In the proof we will not refer to $\DtnNu$ directly, but instead work with the bilinear form $\form$ that was defined in \eqref{aformdef}. Recall the formula
\begin{equation}
\label{Hess3}
	\Hess \lap(\id)(X_1,X_2) = 2\form\big(\rho(X_1\cdot\nu), \rho(X_2 \cdot\nu) \big), \qquad X_1, X_2 \in T_{\id} \EsM,
\end{equation}
from \Cref{thm:Hess1}. If $\Hess \lap(\id)$ is negative on $\spn \{X_1, \cdots, X_m\} \subset T_{\id} \EsM$, then $\form$ is negative on $\spn\{\rho(X_1 \cdot\nu), \ldots, \rho (X_m \cdot\nu)\}$. We claim that this space is $m$ dimensional. If not, there would exist a nontrivial linear combination $X = c_1 X_1 + \cdots + c_m X_m$ for which $\rho(X \cdot \nu) = 0$, but then \eqref{Hess3} would imply $\Hess \lap(\id)(X,X) = 0$, contradicting the fact that $\Hess \lap(\id)$ is negative on $\spn \{X_1, \cdots, X_m\}$. It follows that $n_-(\Hess \lap) \leq n_-(\DtnNu)$.

The reverse inequality, $n_-(\DtnNu) \leq n_-(\Hess \lap)$, is proved using a density argument. Suppose $\form$ is negative on $\spn\{f_1, \ldots, f_m\}$, so the $m\times m$ matrix with entries $\form (f_i,f_j)$ is negative definite. We use \Cref{thm:dense} to approximate each $f_i$ by $h_i = \rho(X_i \cdot\nu)$ for some $X_i \in T_{\id} \EsM$. By choosing each $h_i$ sufficiently close to $f_i$, we can ensure that $h_1, \ldots, h_m$ (and hence $X_1, \ldots, X_m$) are linearly independent and the matrix $\form( h_i, h_j)$ is negative definite, since \eqref{abound} says that $\form$ is continuous on $\wtH(\Sigma)$. It follows from \eqref{Hess3} that
$
	\Hess\lap(\id)(X_i, X_j) = 2 \form\big( h_i, h_j \big),
$
therefore $\Hess\lap(\id)$ is negative on the span of $X_1, \ldots, X_m$. Since the $X_i$ are linearly independent, this implies $n_-(\DtnNu) \leq n_-(\Hess \lap)$ and completes the proof of \Cref{thm:index}.

\begin{rem}
In fact, we get even more from \Cref{thm:dense}. Not only is the Morse index of $\DtnNu$ equal to the Morse index of the Hessian, it is equal to the Morse index of the Hessian \emph{restricted to the subspace} $\cT^\infty_0 \subset T_{\id} \EsM$; cf. \Cref{prop:Hessmin}.
\end{rem}

\subsection{Approximation results}

\label{sec:approx}

Recall the Hilbert space $H^1_0(M;\p P)$ from \eqref{H10P}, with the norm $\| \cdot \|_{_P}$ defined in \eqref{H10Pnorm}. For convenience we set $\Sigma = \p P$, so we denote this space by $H^1_0(M;\Sigma)$.
Our goal in this section is the following density result. 

\begin{theorem}
\label{thm:approx}
Let $F \subset M$ denote the corner points of $P$. The set
\begin{equation}
	\CF = \big\{ \phi \in H^1_0(M; \Sigma) :  \supp \phi \subset M^o \backslash F \text{ and }\phi_i \in C^\infty(\bar\Omega_i) \text{ for each $i$} \big\}
\end{equation}
is dense in $H^1_0(M; \Sigma)$.
\end{theorem}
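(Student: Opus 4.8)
The plan is to approximate an arbitrary $u \in H^1_0(M;\Sigma)$ in three successive stages: first reduce to bounded functions, then cut off a neighborhood of the non-smooth locus of $\p M \cup \Sigma$, and finally mollify near the remaining smooth arcs of $\Sigma$ while pushing the support off the smooth part of $\p M$. For Stage 1 (truncation), let $T_k(t) = \max(-k,\min(k,t))$ and define $u^{(k)}$ by $u^{(k)}_i = T_k(u_i)$ on each $\Omega_i$. Since $T_k$ is odd and Lipschitz, each $u^{(k)}_i \in H^1(\Omega_i)$, it vanishes on $\p M$, and the anti-continuity condition \eqref{AM} is preserved because $T_k\big(u_i|_{\pO_i \cap \pO_j}\big) = T_k\big(-u_j|_{\pO_i \cap \pO_j}\big) = -T_k\big(u_j|_{\pO_i \cap \pO_j}\big)$; hence $u^{(k)} \in H^1_0(M;\Sigma)$. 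A standard dominated-convergence argument gives $u^{(k)}_i \to u_i$ in $H^1(\Omega_i)$, so $u^{(k)} \to u$ in $\|\cdot\|_{_P}$, and it suffices to approximate \emph{bounded} elements of $H^1_0(M;\Sigma)$.

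For Stage 2 (cutting off the corners), let $E$ be the finite set of points at which $\p M \cup \Sigma$ fails to be a smoothly embedded $1$-manifold; note $F \subseteq E$. Because $\dim M = 2$, each point of $E$ has zero $H^1$-capacity, so a logarithmic cutoff construction produces Lipschitz functions $\zeta_\delta$ on $M$ with $0 \le \zeta_\delta \le 1$, $\zeta_\delta \equiv 1$ in a neighborhood of $E$, $\supp \zeta_\delta$ contained in a shrinking neighborhood of $E$, $\zeta_\delta \to 0$ almost everywhere, and $\|\zeta_\delta\|_{H^1(M)} \to 0$. Given a bounded $u \in H^1_0(M;\Sigma)$, the function $(1-\zeta_\delta)u$ again lies in $H^1_0(M;\Sigma)$ — the multiplier is single-valued and Lipschitz, so it respects \eqref{AM} and the Dirichlet condition on $\p M$ — it vanishes near $E$, and it is still bounded by $\|u\|_\infty$. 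Writing $\nabla\big[(1-\zeta_\delta)u\big] = (1-\zeta_\delta)\nabla u - u\,\nabla\zeta_\delta$ and using $\|u\,\nabla\zeta_\delta\|_{L^2} \le \|u\|_\infty\|\nabla\zeta_\delta\|_{L^2} \to 0$ together with dominated convergence, one gets $(1-\zeta_\delta)u \to u$ in $\|\cdot\|_{_P}$. Hence it suffices to approximate bounded functions in $H^1_0(M;\Sigma)$ that vanish in a neighborhood of $E$.

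For Stage 3 (mollification near the smooth part), let $v$ be such a function, so $K := \supp v$ is a compact subset of $M$ disjoint from a neighborhood of $E$; there $\p M$ and $\Sigma$ are disjoint unions of smooth arcs, separated from one another. Cover $K$ by finitely many open sets $V_m$ of three types: (a) $\overline{V_m} \cap (\p M \cup \Sigma) = \varnothing$; (b) $\overline{V_m}\cap \p M = \varnothing$ and $V_m \cap \Sigma$ is a single smooth arc separating $V_m$ into two pieces lying in neighboring subdomains $\Omega_i,\Omega_j$ (exactly two subdomains meet along a smooth arc of $\Sigma$, since $\operatorname{int}\overline{\Omega_i} = \Omega_i$); (c) $\overline{V_m}\cap\Sigma = \varnothing$ and $V_m$ meets only the smooth part of $\p M$. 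With $\{\eta_m\}$ a smooth partition of unity subordinate to this cover, each $\eta_m v \in H^1_0(M;\Sigma)$ and $v = \sum_m \eta_m v$, so it is enough to approximate each $\eta_m v$ in $\CF$. For type (a), $\eta_m v$ is compactly supported in a single $\Omega_i$ and a mollification lies in $\CF$. For type (c), $\eta_m v$ lies in $H^1$ of the half-disc $V_m \cap M$ with vanishing trace on $\p M$, and the standard translate-and-mollify argument yields approximants in $C_c^\infty(V_m \cap M^o) \subset \CF$. For type (b), flatten the arc to a segment by a smooth diffeomorphism; since only two subdomains meet, the function obtained from $\eta_m v$ by reversing its sign on the $\Omega_j$-side has matching traces across the segment, hence extends to an $H^1$ function on all of $V_m$, which we mollify and then multiply back by the same sign — producing a function that is $C^\infty$ on $\overline{\Omega_i}$ and on $\overline{\Omega_j}$, satisfies \eqref{AM}, is supported in $V_m$, and converges to $\eta_m v$ in $\|\cdot\|_{_P}$. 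Summing over $m$ gives an element of $\CF$ arbitrarily close to $v$, and chaining Stages 1--3 proves the theorem.

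The delicate step is Stage 2: the fact that bounded $H^1$-functions can be approximated by functions vanishing near a finite set rests on a point having zero $H^1$-capacity, which is special to two dimensions, and it is essential to carry out the truncation of Stage 1 \emph{first}, since the estimate $\|u\,\nabla\zeta_\delta\|_{L^2} \le \|u\|_\infty\|\nabla\zeta_\delta\|_{L^2}$ fails for general $u \in H^1_0(M;\Sigma)$ (in two dimensions $H^1 \not\hookrightarrow L^\infty$). The remaining stages are routine gluing and mollification, the only twist being the sign reversal that locally converts the anti-continuity condition \eqref{AM} into ordinary continuity across a smooth arc of $\Sigma$ — which is available precisely because, after Stage 2, the support avoids all the corner points where (in the non-bipartite case) no such global sign can be chosen.
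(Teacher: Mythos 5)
Your proof is correct, and it reaches the result by a genuinely different route from the paper. The paper localizes immediately with a partition of unity adapted to $\Sigma$ and then confronts the singular points chart by chart: a sign flip turns the anti-continuity condition \eqref{AM} into continuity across a single arc or across an even number of segments, while a corner with an odd number of segments is handled by the unfolding map $(r,\theta)\mapsto(r,\theta/2)$ of Lemma~\ref{unfold} together with an even/odd decomposition; in both corner cases the finite point set is then removed using Lemma~\ref{lem:H1approx}, whose proof (no nonzero distribution supported at a point lies in $H^{-1}(\bbR^2)$) is the paper's packaging of the fact that points have zero $H^1$-capacity in two dimensions. You instead truncate first, so that multiplication by a logarithmic capacity cutoff is controlled by $\|u\|_\infty\|\nabla\zeta_\delta\|_{L^2}$, excise a neighborhood of \emph{all} singular points of $\p M\cup\Sigma$ at once, and are then left only with routine local work near smooth arcs: the same sign-flip gluing as the paper's Case~2, plus standard boundary mollification near the smooth part of $\p M$. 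This avoids the unfolding lemma and the even/odd corner analysis entirely, and you correctly identify the two delicate points: the truncation must precede the capacity cutoff, and the sign flip across a smooth arc is legitimate because $\operatorname{int}\overline\Omega_i=\Omega_i$ forces exactly two distinct subdomains to meet there. What the paper's packaging buys is Lemma~\ref{lem:H1approx} as a stand-alone statement that is reused elsewhere (for instance for the cutoffs $\phi_n$ in the proof of Proposition~\ref{prop:Hess}); your log-cutoff construction proves the same underlying capacity fact, just inlined. In a final write-up you should add a sentence justifying the standard facts that traces commute with the Lipschitz truncation $T_k$ (so \eqref{AM} really is preserved in Stage 1) and that the singular set $E$ is finite for a partition with corners, but these are at the same level of routine detail as steps the paper itself leaves implicit.
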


That is, we can approximate any $u \in H^1_0(M; \Sigma)$ by a function that is smooth up to the boundary of each $\Omega_i$, satisfies anti-continuity conditions along all pairwise intersections $\Sigma_i \cap \Sigma_j$, and vanishes near the boundary of $M$ and the corner points of $P$.

An easy consequence of this theorem is that we can approximate functions in $\wtH(\Sigma)$ by smooth functions supported away from the singular part of the nodal set. This will be used below in the proof of \Cref{thm:dense}.

\begin{cor}
\label{cor:approxSigma}
$C^\infty_0(\Sigma \backslash F)$ is dense in $\wtH(\Sigma)$.
\end{cor}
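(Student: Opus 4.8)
The plan is to deduce \Cref{cor:approxSigma} from \Cref{thm:approx} by a lift-and-trace argument. First I would construct a lift of a given $f \in \wtH(\Sigma)$ to $H^1_0(M;\Sigma)$. For each $i$ we have $\chi_i f \in \wtH(\Sigma_i)$, so $E_i(\chi_i f) \in H^{1/2}(\pO_i)$ with $\|E_i(\chi_i f)\|_{H^{1/2}(\pO_i)} = \|\chi_i f\|_{\wtH(\Sigma_i)}$; using a bounded extension operator $H^{1/2}(\pO_i) \to H^1(\Omega_i)$ I would pick $u_i \in H^1(\Omega_i)$ with $u_i\big|_{\pO_i} = E_i(\chi_i f)$ and $\|u_i\|_{H^1(\Omega_i)} \le C\|\chi_i f\|_{\wtH(\Sigma_i)}$. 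Since $E_i$ is extension by zero, $u_i$ vanishes on $\pO_i \setminus \Sigma_i \subset \p M$, so $u = (u_i)$ vanishes on $\p M$; and on each shared segment $\Sigma_i \cap \Sigma_j$ the outward normals satisfy $\nu_i = -\nu_j$, hence $\chi_i = \nu\cdot\nu_i = -\nu\cdot\nu_j = -\chi_j$ there, so $u_i = \chi_i f = -\chi_j f = -u_j$ on $\Sigma_i \cap \Sigma_j$, which is exactly the anti-continuity condition. Thus $u \in H^1_0(M;\Sigma)$ with $\|u\|_{_P}^2 = \sum_i \|u_i\|_{H^1(\Omega_i)}^2 \le C^2\|f\|_{\wtH(\Sigma)}^2$.

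Next I would record the trace in the other direction. For any $v \in H^1_0(M;\Sigma)$ the trace of each $v_i$ on $\pO_i$ vanishes on $\pO_i \cap \p M$, so its restriction to $\Sigma_i$ lies in $\wtH(\Sigma_i)$; declaring $(\mathcal T v)\big|_{\Sigma_i \cap \Sigma_j} = \chi_i\big(v_i\big|_{\Sigma_i}\big)$ is consistent across each $\Sigma_i \cap \Sigma_j$ — again because $\chi_i = -\chi_j$ there and $v$ is anti-continuous — and defines $\mathcal T v \in \wtH(\Sigma)$ with $\|\mathcal T v\|_{\wtH(\Sigma)} \le C\|v\|_{_P}$. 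By construction the lift $u$ above satisfies $\mathcal T u = f$.

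Then I would apply \Cref{thm:approx} to $u$ to obtain $\phi^{(n)} \in \CF$ with $\phi^{(n)} \to u$ in $H^1_0(M;\Sigma)$, and set $g^{(n)} := \mathcal T \phi^{(n)}$, so that $g^{(n)} \to \mathcal T u = f$ in $\wtH(\Sigma)$ by boundedness of $\mathcal T$. It remains to check that $g^{(n)} \in C^\infty_0(\Sigma\setminus F)$: since $\phi^{(n)}_i \in C^\infty(\bar\Omega_i)$, its trace is smooth along each smooth arc of $\Sigma_i$; since $\supp \phi^{(n)}$ is a compact subset of $M^o\setminus F$, this trace vanishes in a neighborhood of $\p M$ and of every corner point; and since the factors $\chi_i$ are locally constant on the smooth $1$-manifold $\Sigma\setminus F$, the $\phi^{(n)}_i$-traces patch to an element of $C^\infty(\Sigma\setminus F)$ whose support is a compact subset of $M^o\setminus F$. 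Hence $g^{(n)} \in C^\infty_0(\Sigma\setminus F)$, and density follows.

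I do not expect a genuinely difficult step here, as the technical heart is \Cref{thm:approx} itself. The points that need care are the sign bookkeeping $\chi_i = -\chi_j$ on shared boundary arcs (which simultaneously makes the lift land in $H^1_0(M;\Sigma)$ and makes the trace map well defined) and the verification that the trace of a function in $\CF$ — a priori only smooth on each closed piece $\bar\Omega_i$ separately — is a bona fide element of $C^\infty_0(\Sigma\setminus F)$ across the excised corner points. One should also confirm that the bounded extension $H^{1/2}(\pO_i) \to H^1(\Omega_i)$ is available for the piecewise smooth pieces $\Omega_i$, which it is in the sense of $H^1(\Omega_i)$ used here.
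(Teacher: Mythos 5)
Your argument is correct and follows essentially the same route as the paper: lift $f$ to some $u \in H^1_0(M;\Sigma)$ with $u_i|_{\Sigma_i} = \chi_i f$, approximate $u$ by $\phi \in \CF$ via \Cref{thm:approx}, and pass back to $\Sigma$ using the anti-continuity of $\phi$ and the boundedness of the trace maps $H^1(\Omega_i) \to H^{1/2}(\pO_i)$. The only difference is that you construct the lift explicitly with a bounded extension operator, whereas the paper simply cites \cite[Lemma~2.1]{BCHS} for its existence.
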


\begin{proof}
Given $f \in \wtH(\Sigma)$, there exists $u \in H^1_0(M; \Sigma)$ with $u_i\big|_{\Sigma_i} = \chi_i f_i$ for each $i$; cf. \cite[Lemma~2.1]{BCHS}. Using \Cref{thm:approx}, we approximate $u$ by $\phi \in \CF$. The fact that $\phi$ satisfies anti-continuity conditions along $\Sigma$ guarantees that $\tilde f\big|_{\Sigma_i} := \chi_i \phi_i\big|_{\Sigma_i}$ gives a well defined function $\tilde f \in C^\infty_0(\Sigma \backslash F)$. Since each trace map $H^1(\Omega_i) \to H^{1/2}(\pO_i)$ is bounded, we have
\[
	\| \chi_i \tilde f_i - \chi_i f_i \|_{H^{1/2}(\pO_i)} \leq C \| \phi_i - u_i \|_{H^1(\Omega_i)}
\]
for each $i$, and so $\|f - \tilde f\|_{\wtH(\Sigma)} \leq C \|u - \phi\|_{_P}$ can be made arbitrarily small.
\end{proof}

The first step in the proof of \Cref{thm:approx} is to show that $H^1$ functions can be approximated by smooth functions that vanish near a finite set of points.

\begin{lemma}
\label{lem:H1approx}
If $U \subset M$ is a bounded, Lipschitz domain and $F \subset \bar U$ is finite, then
\[
	\big\{ \phi \in C^\infty(\bar U) : \phi \text{ vanishes in a neighborhood of $F$} \big\}
\]
is dense in $H^1(U)$.
\end{lemma}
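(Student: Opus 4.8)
The plan is to reduce to a local statement near each point of $F$, since away from $F$ the usual density of $C^\infty(\bar U)$ in $H^1(U)$ (valid on Lipschitz domains) already does the job. So fix $u \in H^1(U)$ and $\varepsilon > 0$. First I would choose, by the standard approximation theorem for Lipschitz domains, a function $v \in C^\infty(\bar U)$ with $\|u - v\|_{H^1(U)} < \varepsilon/2$; it then suffices to approximate $v$ by a smooth function that vanishes near $F$, losing only $\varepsilon/2$ more. Writing $F = \{p_1,\dots,p_m\}$, I would pick disjoint coordinate balls $B_r(p_\ell)$ and build a single cutoff $\theta_\delta$ that equals $0$ on $\bigcup_\ell B_\delta(p_\ell)$, equals $1$ outside $\bigcup_\ell B_{2\delta}(p_\ell)$, and satisfies $|\nabla \theta_\delta| \le C/\delta$. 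Then $\theta_\delta v$ is smooth on $\bar U$ and vanishes near $F$, and the whole lemma comes down to showing $\|(1-\theta_\delta) v\|_{H^1(U)} \to 0$ as $\delta \to 0$.

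The $L^2$ part is immediate: $\|(1-\theta_\delta)v\|_{L^2} \le \|v\|_{L^\infty} |\{\theta_\delta \ne 1\}|^{1/2} \to 0$ since the measure of $\bigcup_\ell B_{2\delta}(p_\ell)$ is $O(\delta^2)$ in dimension two. The gradient term is $\nabla\big((1-\theta_\delta)v\big) = (1-\theta_\delta)\nabla v - v\,\nabla\theta_\delta$; the first summand again goes to zero in $L^2$ by dominated convergence, and for the second I would estimate
\[
	\int_U |v|^2 |\nabla\theta_\delta|^2 \, dV \le \frac{C}{\delta^2}\, \|v\|_{L^\infty}^2 \sum_{\ell=1}^m \big|B_{2\delta}(p_\ell)\setminus B_\delta(p_\ell)\big| \le \frac{C'}{\delta^2}\,\|v\|_{L^\infty}^2\, \delta^2 = C''\,\|v\|_{L^\infty}^2,
\]
which is merely bounded, not small. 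To get smallness one refines the cutoff: instead of a single dyadic annulus, use a logarithmic cutoff that transitions from $0$ to $1$ over the range $\delta \le |x - p_\ell| \le \sqrt{\delta}$, namely $\theta_\delta(x) = \min\!\big(1, \max(0, \log(|x-p_\ell|/\delta)/\log(1/\sqrt{\delta}))\big)$ near $p_\ell$. Then $|\nabla\theta_\delta| \le C/(|x-p_\ell|\,|\log\delta|)$, and in two dimensions $\int_{\delta \le |x| \le \sqrt\delta} |x|^{-2}\,dV = 2\pi \log(1/\sqrt\delta) = \pi|\log\delta|$, so $\int_U |v|^2|\nabla\theta_\delta|^2\,dV \le C\|v\|_{L^\infty}^2 |\log\delta|^{-2}|\log\delta| = C\|v\|_{L^\infty}^2/|\log\delta| \to 0$.

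The one genuine subtlety is that $v \in C^\infty(\bar U)$ is bounded on $\bar U$, so $\|v\|_{L^\infty} < \infty$ and all the estimates above are legitimate; this is exactly why I approximate $u$ by a smooth $v$ \emph{first} and cut off afterward, rather than trying to multiply the rough $u$ directly by a cutoff. (This ordering is the standard ``capacity of a point is zero in dimension two'' argument; the fact that points have zero $H^1$-capacity in $\bbR^2$ is precisely the logarithmic-cutoff computation.) I expect the main obstacle to be presenting the logarithmic cutoff cleanly near several points at once and verifying it is smooth up to $\p U$ — handled by taking $\delta$ small enough that the balls $B_{\sqrt\delta}(p_\ell)$ are disjoint and choosing coordinates in which each intersection $B_{\sqrt\delta}(p_\ell)\cap U$ is a Lipschitz piece, on which $\theta_\delta$ (a smooth function of $|x-p_\ell|^2$ away from $p_\ell$, and $\equiv 0$ near $p_\ell$) is manifestly in $C^\infty(\overline{B_{\sqrt\delta}(p_\ell)\cap U})$. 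Combining, $\theta_\delta v \in C^\infty(\bar U)$, vanishes near $F$, and $\|v - \theta_\delta v\|_{H^1(U)} < \varepsilon/2$ for $\delta$ small, completing the proof.
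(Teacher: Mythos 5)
Your argument is correct, but it takes a genuinely different route from the paper. The paper reduces, via a partition of unity, to a single point $x_0$ and a Lipschitz domain in $\bbR^2$, extends $u$ to $H^1(\bbR^2)$ by an extension operator, and then proves density of $C^\infty(\bbR^2\setminus\{x_0\})$ in $H^1(\bbR^2)$ by duality: any $\ell\in H^{-1}(\bbR^2)$ supported at $x_0$ is a finite combination of derivatives of $\delta_{x_0}$, whose Fourier transform is a polynomial times a phase and hence lies in $H^{-1}$ only if it vanishes. Your proof instead approximates $u$ first by a bounded $v\in C^\infty(\bar U)$ and then multiplies by an explicit logarithmic cutoff, which is exactly the ``points have zero $H^1$-capacity in dimension two'' computation; the ordering (smooth first, cut off second, so that $\|v\|_{L^\infty}<\infty$) and the replacement of the dyadic cutoff, which only gives boundedness, by the logarithmic one, which gives the $O(1/|\log\delta|)$ decay, are precisely the points that make this work, and you identify both. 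What your route buys is elementarity and transparency: no duality, no Fourier transform, and the mechanism (vanishing capacity of points) is visible; what the paper's route buys is brevity, since the key step is delegated to a cited theorem and the Fourier computation is one line, with no need to worry about the regularity of a cutoff. One small repair you should make: the cutoff $\min\bigl(1,\max\bigl(0,\log(|x-p_\ell|/\delta)/\log(1/\sqrt\delta)\bigr)\bigr)$ is only Lipschitz at the radii $|x-p_\ell|=\delta$ and $|x-p_\ell|=\sqrt\delta$, so $\theta_\delta v$ is not literally in $C^\infty(\bar U)$; compose with a fixed smooth monotone $\eta$ equal to $0$ on $(-\infty,0]$ and $1$ on $[1,\infty)$, i.e.\ take $\theta_\delta=\eta\bigl(\log(|x-p_\ell|/\delta)/\log(1/\sqrt\delta)\bigr)$, which is identically zero near $p_\ell$ and smooth elsewhere, and satisfies the same gradient bound up to a constant, so the $1/|\log\delta|$ estimate is unaffected.
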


\begin{proof}
Using a partition of unity and local coordinates, it suffices to prove the result when $U \subset \bbR^2$ is a bounded, Lipschitz domain and $F = \{x_0\} \subset \bar U$.

We first claim that $C^\infty(\bbR^2 \backslash \{x_0\})$ is dense in $H^1(\bbR^2)$. By \cite[Theorem~3.28]{Adams}, it suffices to prove that the only distribution $\ell \in H^{-1}(\bbR^2)$ with $\supp \ell \subset \{x_0\}$ is $\ell=0$. If $\ell$ is a distribution with $\supp \ell \subset \{x_0\}$, it must be of the form $\ell = \sum_{|\alpha| \leq m} a_\alpha \partial^\alpha \delta_{x_0}$ for some finite $m$, where $\{a_\alpha\}$ are constants and $\delta_{x_0}$ is the Dirac delta distribution at $x_0$; see, for instance, \cite[Theorem~3.9]{M00}.
Recalling that $\ell \in H^{-1}(\bbR^2)$ if and only if its Fourier transform satisfies $(1+|\xi|^2)^{-1/2} |\hat\ell(\xi)| \in L^2(\bbR^2)$, we compute
\[
	\hat\ell(\xi) = e^{-2\pi i (x_0 \cdot \xi)} \sum_{|\alpha| \leq m} a_\alpha (2\pi i \xi)^\alpha .
\]
It follows that $\ell \in H^{-1}(\bbR^2)$  if and only if all $a_\alpha = 0$, in which case $\ell=0$.

Since $U \subset \bbR^2$ is Lipschitz, there is an extension operator $E \colon H^1(\Omega) \to H^1(\bbR^2)$; see, for instance, \cite[Appendix~A]{M00}. Given $u \in H^1(U)$ and $\epsilon > 0$, we use the above claim to find $\tilde u \in C^\infty(\bbR^2 \backslash \{x_0\})$ with $\|\tilde u - Eu\|_{H^1(\bbR^2)} < \epsilon$, and hence $\| \tilde u|_U - u \|_{H^1(U)} \leq \| \tilde u - Eu \|_{H^1(\bbR^2)} < \epsilon$. Thus $\phi = \tilde u|_U$ is the desired approximation.
\end{proof}

The next lemma will help us analyze $H^1_0(M;\Sigma)$ functions near points where nodal lines intersect.

\begin{lemma}
\label{unfold}
Let $D = \{(x,y) \in \bbR^2 : x^2 + y^2 < 1\}$, $D_+ = \{(x,y) \in D : x > 0\}$ and $\Gamma = [-1,0] \times \{0\}$. The function $D \backslash\Gamma \to D_+$ given in polar coordinates by $(r,\theta) \mapsto (r,\theta/2)$ induces a bounded linear map $T \colon H^1(D \backslash\Gamma) \to H^1(D_+)$ with bounded inverse.
\end{lemma}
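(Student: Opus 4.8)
The map in question is the "unfolding" that doubles the angle, turning the slit disk $D\backslash\Gamma$ (total angular opening $2\pi$) into the half-disk $D_+$ (angular opening $\pi$). Concretely, if $\Phi\colon D\backslash\Gamma\to D_+$ sends $(r,\theta)\mapsto(r,\theta/2)$ for $\theta\in(-\pi,\pi)$, then $T$ is the pullback $Tu = u\circ\Phi^{-1}$, where $\Phi^{-1}\colon D_+\to D\backslash\Gamma$ is $(r,\phi)\mapsto(r,2\phi)$ for $\phi\in(-\pi/2,\pi/2)$. The plan is to verify boundedness of $T$ and $T^{-1}$ by a direct change-of-variables estimate in polar coordinates, being careful that the singularity at $r=0$ and the loss/gain of a factor of $2$ in the angular derivative do not spoil the $H^1$ bound.

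First I would record that $\Phi$ is a diffeomorphism of $D\backslash\Gamma$ onto $D_+$ that is smooth, with smooth inverse, away from the origin, and that both $\Phi$ and $\Phi^{-1}$ extend continuously across $r=0$ (collapsing to the single point $0$). In polar coordinates the Euclidean area element is $r\,dr\,d\theta$ and the gradient has squared norm $|\nabla u|^2 = (\partial_r u)^2 + r^{-2}(\partial_\theta u)^2$. Under the substitution $\theta = 2\phi$ one has, for $v = Tu$ on $D_+$, the relations $\partial_r v(r,\phi) = (\partial_r u)(r,2\phi)$ and $\partial_\phi v(r,\phi) = 2(\partial_\theta u)(r,2\phi)$, while $r\,dr\,d\phi = \tfrac12\, r\,dr\,d\theta$ after accounting for the Jacobian $d\theta = 2\,d\phi$. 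Therefore
\begin{align*}
	\int_{D_+} |\nabla v|^2\,dA
	&= \int_0^1\!\!\int_{-\pi/2}^{\pi/2} \Big( (\partial_r v)^2 + r^{-2}(\partial_\phi v)^2\Big) r\,dr\,d\phi \\
	&= \int_0^1\!\!\int_{-\pi/2}^{\pi/2} \Big( (\partial_r u)^2(r,2\phi) + 4 r^{-2}(\partial_\theta u)^2(r,2\phi)\Big) r\,dr\,d\phi \\
	&= \tfrac12 \int_0^1\!\!\int_{-\pi}^{\pi} \Big( (\partial_r u)^2 + 4 r^{-2}(\partial_\theta u)^2\Big) r\,dr\,d\theta
	\;\leq\; 2 \int_{D\backslash\Gamma} |\nabla u|^2\,dA,
\end{align*}
and similarly $\int_{D_+} v^2\,dA = \tfrac12\int_{D\backslash\Gamma} u^2\,dA$. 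The reverse computation for $T^{-1}$ (where the angular derivative picks up a factor $\tfrac12$ instead of $2$) gives $\int_{D\backslash\Gamma}|\nabla u|^2 \leq 2\int_{D_+}|\nabla v|^2$ and $\int_{D\backslash\Gamma} u^2 = 2\int_{D_+} v^2$. Hence $T$ is a bounded linear bijection of $H^1(D\backslash\Gamma)$ onto $H^1(D_+)$ with bounded inverse (indeed the norms are equivalent).

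The one point that needs a little care — and which I expect to be the main technical obstacle — is justifying that $T$ maps $H^1$ into $H^1$ (rather than merely preserving the formal Dirichlet and $L^2$ integrals): one must check that for $u\in H^1(D\backslash\Gamma)$ the composition $v=u\circ\Phi^{-1}$ genuinely has a distributional gradient on $D_+$ given by the chain rule, even though $\Phi^{-1}$ degenerates at the origin. The clean way is to first prove it for $u$ smooth up to the boundary of $D\backslash\Gamma$ and supported away from the origin — where everything is elementary — then note such $u$ are dense in $H^1(D\backslash\Gamma)$ (this uses that the single point $\{0\}$, and more generally $\Gamma$ reached "from one side", has zero $H^1$-capacity in $2$ dimensions, which is exactly the kind of fact already invoked via Lemma~\ref{lem:H1approx}), and finally pass to the limit using the uniform bound just derived: if $u_n\to u$ in $H^1(D\backslash\Gamma)$ with $u_n$ nice, then $Tu_n$ is Cauchy in $H^1(D_+)$, its limit agrees with $v$ in $L^2$, and the limit of the gradients must be the distributional gradient of $v$. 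The same density/limit argument run in reverse handles $T^{-1}$, and the proof is complete.
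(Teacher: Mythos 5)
Your proposal is correct and takes essentially the same route as the paper, which omits the proof with the remark that it ``follows from an explicit computation of the $H^1$ norms in $D\backslash\Gamma$ and $D_+$''\,---\,precisely the polar-coordinate change-of-variables estimate you carry out (and your factor-of-$2$ bookkeeping is right, giving equivalence of norms). Your extra paragraph justifying the chain rule distributionally, by approximating with functions supported away from the origin (using the zero $H^1$-capacity of a point, as in \Cref{lem:H1approx}) and passing to the limit, is a sound elaboration of the same argument rather than a different approach.
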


\begin{figure}
\begin{tikzpicture}[ scale=0.8]
	\draw[thick,dashed] (2,0) arc[radius=2, start angle=0, end angle=360];
	\draw[very thick] (0,0) -- (-2,0);
\end{tikzpicture}
\hspace{3cm}
\begin{tikzpicture}[ scale=0.8]
	\draw[thick,dashed] (0,-2) arc[radius=2, start angle=-90, end angle=90];
	\draw[very thick] (0,-2) -- (0,2);
\end{tikzpicture}
\caption{The slit disk $D \backslash \Gamma$ (left) and half disk $D_+$ (right) from \Cref{unfold}.}
\label{fig:unfolding}
\end{figure}

This is useful because $D_+$ is a Lipschitz domain whereas $D \backslash \Gamma$ is not; see \Cref{fig:unfolding}. We omit the proof, which follows from an explicit computation of the $H^1$ norms in $D \backslash\Gamma$ and $D_+$.

We are now ready to prove our main approximation theorem.

\begin{proof}[Proof of \Cref{thm:approx}]
Since $P$ is a partition with corners, we can cover $M$ by a finite number of open sets $\{U_a\}$ in $N$ for which there exist coordinate charts $\psi_a \colon U_a \to D$ each satisfying one of the following:
\begin{enumerate}
	\item $D \cap \psi_a(\Sigma) = \varnothing$;
	\item $D \cap \psi_a(\Sigma) = (-1,1) \times \{0\}$;
	\item $D \cap \psi_a(\Sigma)$ is a finite number of line segments meeting at the origin.
\end{enumerate}
(Cf. \Cref{fig:nbhd}, which shows a covering of $\p M \cup \Sigma$ by sets of type (2) and (3).)

Let $\{\eta_a\}$ be a partition of unity subordinate to this cover. Given $u \in H^1_0(M;\Sigma)$ and $\epsilon>0$, it suffices to approximate each $\eta_a u$ by a function $\phi_a \in \CF$ having $\supp \phi_a \subset U_a$ and $\|\eta_a u - \phi_a\|_{_P} < \epsilon$.

\underline{Case 1}: If $U_a$ does not intersect $\Sigma$, then $\eta_a u \in H^1_0(U_a)$, and the existence of $\phi_a$ follows from the density of $C^\infty_0(U_a)$ in $H^1_0(U_a)$.

\underline{Case 2}: In this case, $U_a$ intersects two subdomains, say $\Omega_i$ and $\Omega_j$, and $\eta_a u$ is anti-continuous across their common boundary, $\Sigma_i \cap \Sigma_j \cap U_a$. Consider the function $\hat u_a$ that equals $\eta_a u$ on $U_a \cap \Omega_i$ and $-\eta_a u$ on $U_a \cap \Omega_j$. By construction, $\hat u_a \in H^1_0(U_a)$, so we can approximate it in the $H^1_0(U_a)$ norm by some $\hat \phi_a \in C^\infty_0(U_a)$. It follows that the function $\phi_a$ that equals $\hat \phi_a$ on $U_a \cap \Omega_i$ and $-\hat \phi_a$ on $U_a \cap \Omega_j$ is contained in $H^1_0(M;\Sigma)$ and approximates $\eta_a u$ in the $\| \cdot \|_{_P}$ norm.

\underline{Case 3}: In this case, we can find local coordinates in which $U_a$ corresponds to the unit disk and $\Sigma$ is a finite number of line segments intersecting at the origin. If the number of line segments is even, then we can change the sign of $\eta_a u$ on every other subdomain (as in Case 2) to obtain a function in $H^1(U_a)$, and then apply \Cref{lem:H1approx}.

If the number of line segments is odd, then we can reduce to the situation where an anti-continuity condition is satisfied on just one segment of $\Sigma$, which we take to be $[-1,0] \times \{0\}$. Applying the ``unfolding" transformation of \Cref{unfold} to $\eta_a u$, we get a function $\tilde u \in H^1(D_+)$ that satisfies $\tilde u(0,-y) = -\tilde u(0,y)$ (in the trace sense) on the $y$-axis. Moreover, $\tilde u$ vanishes on the rest of the boundary, since $\supp \eta_a u \subset D$.

Noting that $\tau(x,y) = (x,-y)$ induces an isomorphism on $H^1(D_+)$, we decompose $\tilde u$ into even and odd parts,
\[
	\tilde u = \underbrace{\tfrac12( \tilde u + \tilde u \circ \tau)}_{\tilde u_{\rm e}} + \underbrace{\tfrac12( \tilde u - \tilde u \circ \tau)}_{\tilde u_{\rm o}}.
\]
By the antisymmetry condition, $\tilde u_{\rm e}$ vanishes on the $y$-axis, and hence is in $H^1_0(D_+)$, so there exists $\tilde v \in C^\infty_0(D_+)$ with $\|\tilde u_{\rm e} - \tilde v\|_{H^1(D_+)} < \epsilon$. On the other hand, by \Cref{lem:H1approx} there exists $\phi \in C^\infty(\bar D_+)$ vanishing in a neighborhood of the origin with $\| \tilde u - \phi \|_{H^1(D_+)} < \epsilon$ and hence $\|\tilde u_{\rm o} - \phi_{\rm o}\|_{H^1(D_+)} < \epsilon$. It follows that $\|\tilde u - (\tilde v + \phi_{\rm o}) \|_{H^1(D_+)} < 2\epsilon$, so $\tilde v + \phi_{\rm o}$ is the desired approximation.
\end{proof}

\subsection{Proof of \Cref{thm:dense}}
\label{sec:inj}

Since $P$ is a critical partition, the tangent space $T_{\id} \EsM$ consists of vector fields $X \in \mathfrak{X}^s(N)$ that are tangent to the smooth part of $\p M$ and satisfy
\begin{equation}
\label{tan0}
	\int_{\pO_i} (X \cdot \nu_i) \left(\frac{\p \psi_i}{\p \nu_i}\right)^2 d\mu = 0
\end{equation}
for $1 \leq i \leq k$; see \Cref{rem:criticalH}.

The following lemma is our main tool in the proof of \Cref{thm:dense}.

\begin{lemma}
\label{lem:fsmooth}
Let $P$ be a critical partition with boundary set $\Sigma$. Given $f \in \wtH(\Sigma)$ and $\epsilon>0$, there exists $h \in C^\infty_0(\Sigma\backslash F)$ such that $\|f - h \|_{\wtH(\Sigma)} < \epsilon$ and 
\begin{equation}
\label{ftildeint}
	\int_{\Sigma_i \cap \Sigma_j} f \frac{\p \psi_i}{\p \nu_i}\,d\mu = \int_{\Sigma_i \cap \Sigma_j} h \frac{\p \psi_i}{\p \nu_i}\,d\mu
\end{equation}
for all $i \neq j$.
\end{lemma}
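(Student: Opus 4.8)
Here is how I would approach the proof.

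The plan is to build $h$ in two steps: first use \Cref{cor:approxSigma} to pick a smooth $h_0$ that is already close to $f$ in $\wtH(\Sigma)$, and then add a small, explicit correction — a finite combination of ``bump functions'' supported in the relative interiors of the smooth arcs of $\Sigma$ — that repairs the finitely many moment identities \eqref{ftildeint} without appreciably enlarging the error. To set up the bookkeeping, note that the singular points of $\Sigma$ lying in $M^o$ are exactly the points of $F$, so $\Sigma\setminus F$ is a $1$-manifold; write it as the disjoint union of its connected components, each a smooth arc $e$ whose interior meets $\Sigma_p$ and $\Sigma_q$ for a unique pair of neighbours $\Omega_p,\Omega_q$, one of which I fix and call $\Omega_{i(e)}$. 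Along $e$ (in the interior of $M$) the domain $\Omega_{i(e)}$ is bounded by a smooth piece of $\pO_{i(e)}$, so $\p\psi_{i(e)}/\p\nu_{i(e)}$ is smooth there and, since $\psi_{i(e)}$ is, up to sign, a positive groundstate, it is nowhere zero by the Hopf lemma. I can therefore choose a nonnegative, not identically zero $b_e\in C^\infty_0(\Sigma\setminus F)$ with support a compact subinterval of $e$ and, after rescaling, arrange $\int_e b_e\,(\p\psi_{i(e)}/\p\nu_{i(e)})\,d\mu=1$. Because $\supp b_e$ is disjoint from every arc $e'\neq e$, the bump $b_e$ integrates to zero against every $\p\psi_\ell/\p\nu_\ell$ over every such $e'$. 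Finally, since $\p\psi_i/\p\nu_i\in L^2(\Sigma_i)$ (as $\psi_i\in C^1(\overline\Omega_i)$, see \cite[Theorem~1]{Azzam1}) and $\wtH(\Sigma)$ embeds continuously in $L^2(\Sigma)$, each functional $g\mapsto\int_{\Sigma_i\cap\Sigma_j}g\,(\p\psi_i/\p\nu_i)\,d\mu$ is bounded on $\wtH(\Sigma)$.

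Given $\epsilon>0$, the construction is: use \Cref{cor:approxSigma} to choose $h_0\in C^\infty_0(\Sigma\setminus F)$ with $\|f-h_0\|_{\wtH(\Sigma)}$ small, for each arc $e$ set $c_e=\int_e(f-h_0)\,(\p\psi_{i(e)}/\p\nu_{i(e)})\,d\mu$, and put $h=h_0+\sum_e c_e b_e$, a finite sum lying in $C^\infty_0(\Sigma\setminus F)$. By the disjointness of the supports and the normalization of the $b_e$, for every arc $e$,
\[
	\int_e h\,\frac{\p\psi_{i(e)}}{\p\nu_{i(e)}}\,d\mu = \int_e h_0\,\frac{\p\psi_{i(e)}}{\p\nu_{i(e)}}\,d\mu + c_e = \int_e f\,\frac{\p\psi_{i(e)}}{\p\nu_{i(e)}}\,d\mu .
\]
For the same identity on $e$ with the other adjacent domain, I would invoke that $P$ is critical: by \Cref{thm:critical}, $|a_p\,\p\psi_p/\p\nu_p|=|a_q\,\p\psi_q/\p\nu_q|$ on $\Sigma_p\cap\Sigma_q$, and since on the connected arc $e$ both normal derivatives are continuous and nonvanishing, their ratio is continuous there and of constant modulus, hence a nonzero constant $\kappa_e$; so $\p\psi_q/\p\nu_q=\kappa_e\,\p\psi_p/\p\nu_p$ on $e$, and multiplying the display by $\kappa_e$ gives the identity for the remaining domain. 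Summing over all arcs contained in $\Sigma_i\cap\Sigma_j$ then yields \eqref{ftildeint} for every ordered pair $(i,j)$; when $\Omega_i,\Omega_j$ are not neighbours both sides vanish.

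It remains to bound the error. By the boundedness of the moment functionals, $|c_e|\le C\|f-h_0\|_{\wtH(\Sigma)}$, so
\[
	\|f-h\|_{\wtH(\Sigma)} \le \|f-h_0\|_{\wtH(\Sigma)} + \sum_e |c_e|\,\|b_e\|_{\wtH(\Sigma)} \le \Big(1 + C\sum_e \|b_e\|_{\wtH(\Sigma)}\Big)\|f-h_0\|_{\wtH(\Sigma)} ;
\]
since the $b_e$ are fixed once and for all, choosing $h_0$ close enough to $f$ makes the right-hand side $<\epsilon$.

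The point I expect to require the most care is this interplay with criticality. On a single arc $e\subset\Sigma_p\cap\Sigma_q$ there are a priori two moment conditions (against $\p\psi_p/\p\nu_p$ and against $\p\psi_q/\p\nu_q$), and they cannot be repaired separately by bump corrections on $e$, because on $e$ the two integrands are proportional, making the relevant $2\times2$ matrix of bump integrals singular. But this is not an obstruction: that proportionality is precisely the criticality relation \eqref{eq:normals}, which shows the two conditions are in fact dependent, so repairing one repairs the other. Modulo this observation, the lemma reduces to the substantive approximation result \Cref{cor:approxSigma} — whose proof, via \Cref{thm:approx}, is where the delicate $H^{1/2}$ approximation near the corner points actually happens — together with routine estimates.
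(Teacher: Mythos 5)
Your proposal is correct and takes essentially the same route as the paper's proof: first approximate $f$ by some $h_0 \in C^\infty_0(\Sigma\backslash F)$ via \Cref{cor:approxSigma}, then add small bump functions supported in the smooth part of $\Sigma$ to restore the moment conditions (with $|c_e|$ controlled by $\|f-h_0\|_{\wtH(\Sigma)}$), and finally use the criticality relation \eqref{eq:normals} to deduce the identity against $\p\psi_j/\p\nu_j$ from the one against $\p\psi_i/\p\nu_i$. The only cosmetic difference is that you match moments arc-by-arc rather than per pair $\Sigma_i\cap\Sigma_j$, which is a harmless strengthening.
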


\begin{proof}
Using \Cref{cor:approxSigma}, we can find $f_0 \in C^\infty_0(\Sigma\backslash F)$ with $\|f - f_0 \|_{\wtH(\Sigma)} < \epsilon$. To complete the proof we modify $f_0$ so that \eqref{ftildeint} holds. We do this by adding a small bump function supported in the interior of each nodal segment $\Sigma_i \cap \Sigma_j$.

Fix $i < j$ and choose a function $\eta \in C^\infty_0(\Sigma_i \cap \Sigma_j)$ that is nonnegative but not identically zero. Since $\frac{\p \psi_i}{\p \nu_i}$ only vanishes at the corners, this guarantees
$
	\int_{\Sigma_i \cap \Sigma_j} \eta \frac{\p \psi_i}{\p \nu_i}\,d\mu \neq 0.
$
Therefore we can choose a real number $s$ so that
\begin{equation}
\label{ftildeint2}
	\int_{\Sigma_i \cap \Sigma_j} f \frac{\p \psi_i}{\p \nu_i}\,d\mu = \int_{\Sigma_i \cap \Sigma_j} (f_0 + s\eta) \frac{\p \psi_i}{\p \nu_i}\,d\mu,
\end{equation}
namely
\[
	s = \frac{\int_{\Sigma_i \cap \Sigma_j} (f - f_0) \frac{\p \psi_i}{\p \nu_i}\,d\mu}{\int_{\Sigma_i \cap \Sigma_j} \eta \frac{\p \psi_i}{\p \nu_i}\,d\mu} .
\]

It follows that $|s| \leq C \|f - f_0 \|_{\wtH(\Sigma)}$ for some constant $C$ not depending on $f$ or $f_0$. Then the function $f_0 + s\eta$ satisfies \eqref{ftildeint2} on $\Sigma_i \cap \Sigma_j$, and
\[
	\big\| f - (f_0 + s\eta) \big\|_{\wtH(\Sigma)} \leq \left(1 + C \| \eta \|_{\wtH(\Sigma)} \right) \big\| f - f_0 \big\|_{\wtH(\Sigma)}.
\]

Repeating for all nodal segments, we obtain a function $h \in C^\infty_0(\Sigma\backslash F)$ such that $\|f - h \|_{\wtH(\Sigma)} < \epsilon$ and  \eqref{ftildeint} holds for all $i<j$. To complete the proof we must show that this also holds for $i>j$.

Since $P$ is a critical partition, we get from \Cref{thm:critical} that
\begin{equation}
	\frac{\p \psi_j}{\p \nu_j} = \left| \frac{a_i}{a_j}\right| \frac{\p \psi_i}{\p \nu_i}
\end{equation}
on $\Sigma_i \cap \Sigma_j$ (assuming each $\psi_i$ is positive, and hence has negative normal derivative). It follows that
\begin{equation}
	\int_{\Sigma_i \cap \Sigma_j} f \frac{\p \psi_j}{\p \nu_j}\,d\mu =	\left| \frac{a_i}{a_j}\right| \int_{\Sigma_i \cap \Sigma_j} f \frac{\p \psi_i}{\p \nu_i}\,d\mu = \left| \frac{a_i}{a_j}\right|  \int_{\Sigma_i \cap \Sigma_j} h \frac{\p \psi_i}{\p \nu_i}\,d\mu 
	= \int_{\Sigma_i \cap \Sigma_j} h \frac{\p \psi_j}{\p \nu_j}\,d\mu
\end{equation}
and so \eqref{ftildeint} holds for all $i \neq j$, as was to be shown.
\end{proof}

We are now ready to prove \Cref{thm:dense}, and hence complete the proof of \Cref{thm:index}.

\begin{proof}[Proof of \Cref{thm:dense}]
Let $f \in \dom(\form)$ and choose $h$ as in \Cref{lem:fsmooth}. Since $h$ vanishes in a neighborhood of each corner point (where $\rho$ is zero), $X = \rho^{-1} h \nu$ defines a smooth vector field on $\Sigma$, which can be extended to a vector field on $N$ with support in $M^o\backslash F$. By construction this satisfies $\rho(X\cdot\nu) = h$. For each $i$ we calculate
\begin{align*}
	\int_{\Sigma_i} \chi_i h \frac{\p \psi_i}{\p \nu_i} \,d\mu 
	= \sum_{j \neq i} \int_{\Sigma_i \cap \Sigma_j}  \chi_i h \frac{\p \psi_i}{\p \nu_i} \,d\mu 
	= \sum_{j \neq i} \int_{\Sigma_i \cap \Sigma_j} \chi_i f \frac{\p \psi_i}{\p \nu_i} \,d\mu 
	= \int_{\Sigma_i} \chi_i f \frac{\p \psi_i}{\p \nu_i} \,d\mu = 0,
\end{align*}
using \eqref{ftildeint} and the fact that $\chi_i$ is constant on $\Sigma_i \cap \Sigma_j$ for $i \neq j$. The last integral vanishes because $f$ is contained in $\dom(\form)$. Therefore $X$ satisfies \eqref{tan0} for each $i$ and hence is contained in $\cT^\infty_0$.
\end{proof}

\subsection{The nullity}
\label{sec:null}

Recall the submanifold 
\begin{equation}
	\DsP = \big\{ \varphi \in \DsM : \varphi(P) = P \big\}
\end{equation}
of diffeomorphisms that leave $P$ invariant, as described in \Cref{ssec:submanifold}. Its tangent space at the identity is the set of vector fields that are tangent to the smooth part of $\p M \cup \Sigma$. Since every such vector field has $X \cdot \nu = 0$, we see from \eqref{Hess1} that $\Hess\lap(X_1, X_2) = 0$ for any $X_1 \in T_{\id} \DsP$ and $X_2 \in T_{\id} \EsM$. This means the nullspace\footnote{The \emph{nullspace} of a bilinear form $b$ is the set of $x \in \dom(b)$ such that $b(x,y) = 0$ for all $y \in \dom(b)$, and the \emph{nullity} is the dimension of its nullspace.} of $\Hess\lap$ contains $T_{\id} \DsP$, and hence is infinite dimensional.

The nullity becomes finite when we restrict the Hessian to the normal bundle of $\DsP$, viewed as a submanifold of $\EsM$. To make this precise, we let $\big(T_{\id} \DsP\big)^\perp$ denote the $H^s$-orthogonal complement in $T_{\id} \EsM$, and denote by $n_0^\perp(\Hess\lap)$ the nullity of $\Hess\lap$ restricted to $\big(T_{\id} \DsP\big)^\perp$.

\begin{theorem}
\label{thm:null}
Under the conditions of \Cref{thm:equality}, we have
\begin{equation}
\label{eq:nullbound}
	n_0^\perp(\Hess\lap) \leq  n_0(\DtnNu) = n_0\big(\DP + \Lambda(P)\big) -1.
\end{equation}
\end{theorem}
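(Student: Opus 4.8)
The plan is to pull the bilinear form $\Hess\lap(\id)$ back to the form $\form$ along the map $\iota\colon T_{\id}\EsM\to\dom(\form)$, $\iota(X)=\rho(X\cdot\nu)$, which by \eqref{Hess1} satisfies $\Hess\lap(\id)(X_1,X_2)=2\,\form\big(\iota X_1,\iota X_2\big)$. Since $a_i\neq 0$ and $\psi_i$ is a positive groundstate, Hopf's lemma gives $\frac{\p\psi_i}{\p\nu_i}<0$ on the smooth part of $\pO_i$, so $\rho$ is positive almost everywhere on $\Sigma$; hence for $X\in T_{\id}\EsM$ we have $\iota(X)=0$ if and only if $X\cdot\nu=0$ on the smooth part of $\Sigma$, i.e.\ $\ker\iota=T_{\id}\DsP$. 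Thus $\iota$ restricts to a linear isomorphism from $\big(T_{\id}\DsP\big)^\perp$ onto $W:=\iota\big(T_{\id}\EsM\big)$, and $\Hess\lap(\id)$ restricted to $\big(T_{\id}\DsP\big)^\perp$ is the pullback of $2\,\form\big|_W$ under this isomorphism. Consequently $n_0^\perp(\Hess\lap)$ equals the dimension of $\{f\in W:\form(f,g)=0\text{ for all }g\in W\}$, the nullspace of $\form\big|_W$.

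I would then show $W$ is dense in $\dom(\form)$ in the $\wtH(\Sigma)$ norm. Indeed $\cT_0^\infty\subset T_{\id}\EsM$, and since $\iota$ annihilates $T_{\id}\DsP$ we have $\iota(\cT_0^\infty)\subseteq W\subseteq\dom(\form)$; by \Cref{thm:dense} the set $\iota(\cT_0^\infty)$ is already dense, so $W$ is dense. Now suppose $f\in W$ lies in the nullspace of $\form|_W$. Then $\form(f,\cdot)$ vanishes on the dense subspace $W$, and by the continuity bound \eqref{abound} it is $\wtH(\Sigma)$-continuous, hence it vanishes on all of $\dom(\form)$. Therefore $f$ lies in the nullspace of $\form$, which coincides with $\ker\DtnNu$. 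This yields $n_0^\perp(\Hess\lap)\le\dim\ker\DtnNu=n_0(\DtnNu)$, the inequality in \eqref{eq:nullbound}. (As in \Cref{sec:equality}, the argument in fact shows this nullity is already recovered using deformations in $\cT_0^\infty$.)

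For the identity $n_0(\DtnNu)=n_0\big(\DP+\Lambda(P)\big)-1$, the plan is to build a surjection $\Phi\colon\ker\big(-\DP-\Lambda(P)\big)\to\ker\DtnNu$. Given $u$ in the source, the anti-continuity \eqref{AM} together with $\chi_i=-\chi_j$ on $\Sigma_i\cap\Sigma_j$ shows that $f|_{\Sigma_i}:=\chi_i u_i\big|_{\Sigma_i}$ is a well-defined function on $\Sigma$, and a Green's identity (using $\psi_i|_{\pO_i}=0$ and $u_i|_{\pO_i\setminus\Sigma_i}=0$) puts $f$ in $\wtH(\Sigma)\cap\Snu=\dom(\form)$. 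Writing $u_i=u_i^f+c_i\psi_i$ with $c_i=\int_{\Omega_i}u_i\psi_i\,dV$ and invoking the transmission condition \eqref{AM2}, one computes $\p_\nu u^f=-\sum_m c_m\sigma_m$, where $\sigma_m$ is the function on $\Sigma$ equal to $\chi_m\frac{\p\psi_m}{\p\nu_m}$ on $\Sigma_m$ and zero elsewhere. Since $\{\sigma_m\}$ spans $\Snu^\perp$, this says $\p_\nu u^f\in\Snu^\perp$, so $\DtnNu f=\Pi_{\Snu}(\p_\nu u^f)=0$, i.e.\ $f\in\ker\DtnNu$; set $\Phi(u)=f$. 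Conversely, any $f\in\ker\DtnNu$ has $\p_\nu u^f=\sum_m d_m\sigma_m\in\Snu^\perp$, and then $u_i:=u_i^f-d_i\psi_i$ defines a preimage, so $\Phi$ is onto. Its kernel is $\big\{\sum_m c_m\psi_m:\sum_m c_m\sigma_m=0\big\}$, whence $\dim\ker\Phi=k-\dim\Snu^\perp$ and
\begin{equation*}
	n_0\big(\DP+\Lambda(P)\big)=n_0(\DtnNu)+k-\dim\Snu^\perp.
\end{equation*}
Finally, since $P$ is a critical partition, \Cref{thm:critical} gives $\big|a_i\frac{\p\psi_i}{\p\nu_i}\big|=\big|a_j\frac{\p\psi_j}{\p\nu_j}\big|$ on each $\Sigma_i\cap\Sigma_j$, so the relation $\sum_m c_m\sigma_m=0$ forces $c_i/|a_i|=c_j/|a_j|$ for every pair of neighbors. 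As the adjacency graph of $P$ is connected ($M$ being connected), this space of relations is exactly $\bbR\cdot(|a_1|,\dots,|a_k|)$, of dimension one, so $\dim\Snu^\perp=k-1$ and the displayed identity becomes $n_0\big(\DP+\Lambda(P)\big)=n_0(\DtnNu)+1$.

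The main obstacle is the density step: it depends on \Cref{thm:dense} (the delicate approximation result of the paper) and on the observation that density survives passage to the orthogonal complement of $T_{\id}\DsP$. Granting \Cref{thm:dense}, the remaining work is the dimension bookkeeping for $\ker\Phi$, where criticality of $P$ and connectedness of its adjacency graph are precisely what pin $\dim\Snu^\perp$ to $k-1$; this last identity may alternatively be extracted from the analysis in \cite{BCHS}.
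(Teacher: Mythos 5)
Your proof of the inequality $n_0^\perp(\Hess\lap)\le n_0(\DtnNu)$ is essentially the paper's own argument: both rest on \Cref{thm:Hess1}, on the identification of the kernel of $X\mapsto\rho(X\cdot\nu)$ on $T_{\id}\EsM$ with $T_{\id}\DsP$ (so that the map is injective on $\big(T_{\id}\DsP\big)^\perp$), and on \Cref{thm:dense} together with the continuity bound \eqref{abound} to upgrade ``$\form$-orthogonal to the range of $X\mapsto\rho(X\cdot\nu)$'' to ``$\form$-orthogonal to all of $\dom(\form)$,'' i.e.\ membership in $\ker\DtnNu$. Where you genuinely diverge is the identity $n_0(\DtnNu)=n_0\big(\DP+\Lambda(P)\big)-1$: the paper simply imports this from \cite[Theorem~1.7]{BCHS}, whereas you reconstruct it by exhibiting the linear surjection $u\mapsto f$, $f\big|_{\Sigma_i}=\chi_i u_i\big|_{\Sigma_i}$, from $\ker\big(\DP+\Lambda(P)\big)$ onto $\ker\DtnNu$ and counting its kernel. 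Your bookkeeping is correct: the kernel consists of the piecewise groundstates $\sum_m c_m\psi_m$ subject to the transmission condition \eqref{AM2}, and criticality (\Cref{thm:critical}), Hopf's lemma, and connectedness of the segment-adjacency graph of $P$ pin this space down to the line through $(|a_1|,\dots,|a_k|)$, equivalently $\dim\Snu^\perp=k-1$; rank--nullity then gives the claimed identity. This buys a self-contained statement at the cost of redoing a piece of the analysis of \cite{BCHS}; the paper's citation is shorter but leaves the ``$-1$'' unexplained. Two small points to tighten if you write this up: (i) the Green's identity placing $f$ in $\Snu$, and the computation $\p_\nu u^f=-\sum_m c_m\sigma_m$, should be phrased in the weak sense appropriate to cornered domains\,---\,a priori $\p_\nu u^f\in\wtH(\Sigma)^*$, and it is because your right-hand side lies in $L^2(\Sigma)$ that the domain characterization of $\DtnNu$ from \Cref{sec:DtN} applies and yields $f\in\dom(\DtnNu)$ with $\DtnNu f=0$; (ii) the connectedness of the adjacency graph deserves its one-line justification rather than the parenthetical ``$M$ being connected'': if the subdomains split into two groups meeting only at finitely many corner points, those points would disconnect $M$, which is impossible for a surface.
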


\begin{proof}
The proof is similar to the first half of the proof of \Cref{thm:index} in \Cref{sec:equality}. Suppose the nullspace of the restricted Hessian contains linearly independent vector fields $\{X_1, \cdots, X_m\} \subset \big(T_{\id} \DsP\big)^\perp$. This means
\[
	2\form\big(\rho(X_i\cdot\nu), \rho(Y \cdot\nu) \big) = \Hess \lap(\id)(X_i,Y) = 0
\]
for all $Y \in \big(T_{\id} \DsP\big)^\perp$ and hence for all $Y \in T_{\id} \EsM$. It follows from \Cref{thm:dense} that 
$
	\form\big(\rho(X_i \cdot\nu), f \big) = 0
$
for all $f \in \dom(\form)$, therefore the nullspace of $\form$ contains $\spn\{\rho(X_1 \cdot\nu), \ldots, \rho (X_m \cdot\nu)\}$. This latter space is $m$ dimensional because the map
\begin{equation}
\label{Xmap}
	\big(T_{\id} \DsP\big)^\perp \ni X \mapsto \rho(X \cdot\nu) \in \dom(\form)
\end{equation}
is injective, therefore $n_0^\perp(\Hess\lap) \leq n_0(\DtnNu)$. On the other hand, from \cite[Theorem~1.7]{BCHS} we have $n_0(\DtnNu) = n_0\big(\DP + \Lambda(P)\big) -1$, and the result follows.
\end{proof}

If $\Lambda(P)$ is a simple eigenvalue of $-\DP$, it follows from \Cref{thm:null} that $n_0^\perp(\Hess\lap) = 0$, thus the restriction of the Hessian to the normal bundle is nondegenerate and \eqref{eq:nullbound} is an equality. If $\Lambda(P)$ is not simple, however, the inequality \eqref{eq:nullbound} can be strict: If there is a function in $\ker\DtnNu$ that does not vanish at a corner point, it will not be in the range of the map in \eqref{Xmap}, in which case $n_0^\perp(\Hess\lap) < n_0(\DtnNu)$.

\label{rem:null}A simple example is the 4-partition of the rectangle $(0,\alpha\pi) \times (0,\pi)$ generated by the eigenfunction $\psi_{2,2}(x,y) = \sin(2x/\alpha) \sin(2y)$, with $\alpha = \sqrt{5/3}$ chosen so that $\psi_{3,1}(x,y) = \sin(3x/\alpha) \sin(y)$ is an eigenfunction for the same eigenvalue. The restriction of $\psi_{3,1}$ to the nodal set of $\psi_{2,2}$ is in the kernel of $\DtnNu$. Since it is nonzero at the corner point $\big(\frac{\alpha\pi}{2}, \frac{\pi}{2} \big)$, we have $0 = n_0(\Hess\lap) < n_0(\DtnNu) = 1$.

\section{Local vs global minima}
\label{sec:LG}

\subsection{The bipartite case: proof of \Cref{thm:bi}}
Referring to the statement of the theorem, we will prove that (1) $\Rightarrow$ (2) $\Rightarrow$ (3) $\Rightarrow$ (1).

The implication (1) $\Rightarrow$ (2) was shown in \cite[Corollary~5.6]{HHOT}; see also \cite[Theorem~1.6]{BCCM3}. On the other hand, (2) $\Rightarrow$ (3) is trivial because a minimal partition is necessarily a local minimum. To prove (3) $\Rightarrow$ (1), we observe that if $P$ is a local minimum, then it is a critical partition for which $\Hess\lap$ is nonnegative. It then follows from \Cref{thm:equality BP} that $P$ is the nodal partition of a Laplacian eigenfunction with $\delta(P) = n_-(\Hess\lap) = 0$, therefore the eigenfunction is Courant sharp.

\subsection{The general case: proof of \Cref{thm:nonbi}}
\label{sec:proofnonbi}

As in the bipartite case, we will prove that (1) $\Rightarrow$ (2) $\Rightarrow$ (3) $\Rightarrow$ (1). The implication (1) $\Rightarrow$ (2) is exactly \cite[Theorem~1.6]{BCCM3} and (3) $\Rightarrow$ (1) follows from \Cref{thm:equality} as in the bipartite case. Therefore it remains to prove (2) $\Rightarrow$ (3).

As noted in the introduction, this proof is somewhat delicate in the non-bipartite case, since $\cP_k(P)$ only contains partition with the same corner points as $P$. As a result, the set $\{ \varphi \in \DsM : \varphi(P) \in \cP_k(P) \}$ does not contain a neighborhood of the identity, so it is not immediately clear that $P$ is a critical partition. 

As noted in \Cref{ssec:hom}, however, a small perturbation of $P$ \emph{will} have boundary set homologous to $\p P$ if the perturbation does not move any of the corner points in the interior of $M$. To make this precise, recall the function $\Phi$ defined in \eqref{Phi}, which maps a neighborhood $\cU \subset \mathfrak{X}^s_M(N)$ of the zero section onto a neighborhood of the identity in $\DsM$, by \Cref{lem:epsilon}.

\begin{lemma}
\label{def:hom}
Let $P$ be a partition with corners. If $U \in \cU$ vanishes at all of the corner points in $M^o$, then $\Phi(U)(P) \in \cP_k(P)$.
\end{lemma}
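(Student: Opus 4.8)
The plan is to show that the boundary set of $\tilde P := \Phi(U)(P) = \{\varphi(\Omega_i)\}$, where $\varphi := \Phi(U) = \exp\circ U$, is \emph{itself} homologous to $\p P = \Sigma$; since a set is homologous to itself and $\p\tilde P$ trivially contains $\p\tilde P$, this is more than enough to place $\tilde P$ in $\cP_k(P)$. The first thing to record is that $\{\varphi(\Omega_i)\}$ is indeed a $k$-partition of $M$ (as $\varphi$ is a homeomorphism of $M$), and that $\p\tilde P = \varphi(\Sigma)$: since $\varphi(\p M)=\p M$, we have $\overline{\p(\varphi(\Omega_i))\setminus\p M} = \varphi\big(\overline{\p\Omega_i\setminus\p M}\big) = \varphi(\Sigma_i)$ for each $i$, hence $\p\tilde P=\varphi(\Sigma)$. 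So, by the definition of homology recalled in \Cref{ssec:hom} (in the form valid for merely piecewise-$C^1$ boundary sets, as in \cite{BCCM3}), it remains to prove that $\overline{\Sigma\triangle\varphi(\Sigma)}$ is the boundary set of a bipartite partition of $M$.

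The key construction is an isotopy. Since $\cU$ is contained in an $\epsilon$-ball in $\mathfrak X^s_M(N)$ with $\epsilon$ as in \Cref{lem:epsilon}, the path $\varphi_t := \Phi(tU)=\exp\circ(tU)$, $t\in[0,1]$, lies in $\DsM$ and joins $\varphi_0=\id$ to $\varphi_1=\varphi$. Crucially, \emph{every} $\varphi_t$ fixes \emph{every} corner point of $P$: the corners on $\p M$ are fixed because $U\in\cU\subset\mathfrak X^s_M(N)$ vanishes at every corner of $\p M$, and the corners in $M^o$ are fixed by hypothesis; thus $tU$ vanishes at all corners and $\exp_x 0=x$. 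Since $s>2$ gives $H^s\hookrightarrow C^1$, we may moreover assume (shrinking $\cU$, which is harmless for the use of this lemma in \Cref{thm:nonbi}) that $\|U\|_{C^1}$ is as small as we like; then $\varphi(\Sigma)$ lies in an arbitrarily thin tubular neighbourhood of $\Sigma$, and at each fixed corner $p$ the differential $D\varphi_p$ is close to the identity. It follows that $\Sigma$ and $\varphi(\Sigma)$ have exactly the same corner points, with the same local multiplicities: at an interior corner $p$ from which $d$ smooth arcs of $\Sigma$ emanate with tangent directions $v_1,\dots,v_d$, exactly $d$ smooth arcs of $\varphi(\Sigma)$ emanate from $p$, with tangent directions $D\varphi_p v_1,\dots,D\varphi_p v_d$ close to $v_1,\dots,v_d$ (with the analogous statement at corners of $\p M$).

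Next I would exhibit the bipartite partition. Since transversality is generic, one can perturb $\varphi$ inside $\DsM$, keeping $\|U\|_{C^1}$ small and fixing every interior corner, so that $\Sigma$ and $\varphi(\Sigma)$ meet only at their common corner points; then $\overline{\Sigma\triangle\varphi(\Sigma)}=\Sigma\cup\varphi(\Sigma)$. Each smooth arc $\sigma$ of $\Sigma$ now lies entirely on one side of the corresponding arc $\varphi(\sigma)$, and the two arcs cobound a thin ``bigon'' region $B_\sigma$ (an annulus if $\sigma$ is a closed loop, and a region partly bounded by a short arc of $\p M$ if $\sigma$ ends at a smooth point of $\p M$). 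Colour every $B_\sigma$ one colour and every remaining (``large'') connected component of $M^o\setminus(\Sigma\cup\varphi(\Sigma))$ the other. Using the local picture at the corners established above — around each corner the $2d$ sectors alternate between bigon sectors (those swept between $v_j$ and $D\varphi_p v_j$) and large sectors (slightly deformed images of the original sectors of $P$) — one checks that each smooth arc of $\Sigma\cup\varphi(\Sigma)$ has a bigon on one side and a large component on the other. Hence this is a genuine $2$-colouring and $\overline{\Sigma\triangle\varphi(\Sigma)}$ is precisely the boundary set of the resulting bipartite partition, so $\varphi(\Sigma)$ is homologous to $\Sigma$ and $\tilde P\in\cP_k(P)$. (Alternatively, one can bypass this construction and appeal directly to the homotopy-invariance of the homology class of a low-regularity boundary set proved in \cite{BCCM3}, applied to the isotopy $\varphi_t$.)

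I expect the main obstacle to be the analysis near the corner points, together with the transversality reduction. Near a corner $p$, $\varphi$ is not the identity but only fixes $p$, so the arcs of $\varphi(\Sigma)$ leave $p$ in slightly rotated directions and could a priori re-cross $\Sigma$; one has to justify the generic perturbation achieving $\Sigma\cap\varphi(\Sigma)=\{\text{corners}\}$ \emph{while staying in $\DsM$ and fixing every interior corner}, and then verify that no ``phantom'' arc is produced, i.e. that $\overline{\Sigma\triangle\varphi(\Sigma)}$ — and nothing larger — is exactly the boundary set of the coloured partition. This is also exactly where the hypothesis that $U$ vanishes at the interior corners is used: if $\varphi$ moved some interior corner $p$ of $P$, then $p$ would become a vertex of $\overline{\Sigma\triangle\varphi(\Sigma)}$ of odd degree $d$, which is impossible for the boundary set of a bipartite partition, and the conclusion would fail — consistent with the remark in \Cref{ssec:hom}. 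For a fully rigorous and compact write-up it is probably cleanest to lean on the homology machinery of \cite{BCCM3}.
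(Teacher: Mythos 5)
Your parenthetical fallback\,---\,apply the homotopy invariance of the homology class from \cite{BCCM3} to the isotopy $\varphi_t=\Phi(tU)$\,---\,is precisely the paper's proof: since $U$ vanishes at every corner point, $\varphi_t$ provides a fixed-endpoint homotopy between each smooth segment $\Gamma$ of $\p P$ and $\Phi(U)(\Gamma)$, so these are homologous 1-chains in the sense of \cite{BCCM3}, and therefore $\p P$ and $\p\big(\Phi(U)(P)\big)$ are homologous. Had you led with that observation, the argument would be complete in a few lines.

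Your primary, hands-on construction, however, has genuine gaps. First, the reduction ``perturb $\varphi$ inside $\DsM$ so that $\Sigma\cap\varphi(\Sigma)$ consists only of the corners'' is not available: the lemma concerns the \emph{given} $U$, and it is needed for \emph{all} such $U$ in \Cref{prop:Hessmin}, not for generic ones. Replacing $\varphi$ by a perturbation changes the partition $\Phi(U)(P)$ whose membership in $\cP_k(P)$ you must establish, and transferring the conclusion back to the original $\varphi$ would require exactly the invariance of the homology class under such deformations that your explicit construction is trying to avoid. Second, even for a perturbed map, genericity yields finitely many transverse interior crossings, not disjointness: a small $C^1$ perturbation cannot in general arrange that $\sigma$ and $\varphi(\sigma)$ meet only at their endpoints, so the picture in which each arc lies entirely on one side of its image and cobounds a single thin bigon is unjustified. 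With crossings\,---\,let alone with the non-transverse intersections the actual $\varphi$ may produce, where $\Sigma\cap\varphi(\Sigma)$ can be an arbitrary closed set and $\overline{\Sigma\triangle\varphi(\Sigma)}$ need not be a boundary set ``with corners'' at all\,---\,the proposed two-colouring argument as written does not go through; this is exactly why the paper argues at the level of 1-chains and fixed-endpoint homotopies from \cite{BCCM3} rather than analyzing the symmetric difference directly. (A minor further point: shrinking $\cU$ is not really yours to do, since the lemma is stated for the neighbourhood fixed in the proof of \Cref{thm:Ds}, although that alone would be harmless for the intended application.)
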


\begin{proof}
Let $\Gamma$ be a smooth segment of $\p P$. Since $U$ vanishes at the corner points, the one-parameter family of diffeomorphisms $\varphi_t = \Phi(tU)$ provides a fixed-endpoint homotopy between $\Gamma$ and $\Phi(U)(\Gamma)$, so they are homologous 1-chains, as defined in \cite{BCCM3}. This holds for every smooth curve in $\p P$, therefore the boundary sets of $P$ and $\Phi(U)(P)$ are homologous.
\end{proof}

Using this and the approximation result in \Cref{thm:dense}, we obtain the following result, which completes the proof of (2) $\Rightarrow$ (3) in \Cref{thm:nonbi}.

\begin{prop}
\label{prop:Hessmin}
If $P$ minimizes $\Lambda$ in $\cP_k(P)$, then $P$ is a critical partition with $\Hess \lap(\id)(X,X) \geq 0$ for \emph{all} $X \in T_{\id}\EsM$.
\end{prop}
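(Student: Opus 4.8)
The plan is to test the minimality of $P$ only by deformations that fix the interior corner points — which, by the argument in the proof of \Cref{def:hom}, keep the deformed partition inside $\cP_k(P)$ — and then to use the density statement \Cref{thm:dense} to pass from such deformations to arbitrary elements of $T_{\id}\EsM$. Fix $s$ as in \Cref{thm:Hess1}, set $\lambda_* = \Lambda(P)$, and begin by showing that the first variation of $\lap$ vanishes on $\cT_0^\infty$. Let $X \in \cT_0^\infty$; since $X$ is smooth and vanishes near $\p M$ and near each interior corner point, $\varphi_t := \Phi(tX)$ fixes those points, so $\varphi_t(P) \in \cP_k(P)$ for small $t$ and hence $\max_i \lambda_1(\varphi_t(\Omega_i)) = \Lambda(\varphi_t(P)) \geq \lambda_*$ for all small $t$ of either sign. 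Writing $g_i(t) = \lambda_1(\varphi_t(\Omega_i))$, each $g_i$ is smooth with $g_i(0) = \lambda_*$, and Hadamard's formula (\Cref{thm:Hadamard}, as used in \Cref{ssec:eq}) together with $X \in T_{\id}\EsM$ and \eqref{TEsM} shows that $g_i'(0) = -\int_{\pO_i}(X\cdot\nu_i)\bigl(\tfrac{\p\psi_i}{\p\nu_i}\bigr)^{2}\,d\mu =: -D(X)$ is independent of $i$. Then $\max_i g_i(t) = \lambda_* - D(X)\,t + O(t^{2}) \geq \lambda_*$ for $t$ small of both signs forces $D(X) = 0$, i.e. $d\lap(\id)(X) = 0$.

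\textbf{$P$ is a critical partition.} For every $X \in T_{\id}\EsM$ the functional $d\lap(\id)(X) = -\int_{\Sigma_1}(X\cdot\nu_1)\bigl(\tfrac{\p\psi_1}{\p\nu_1}\bigr)^{2}\,d\mu$ depends continuously on the trace $X\cdot\nu$ on $\Sigma$ and, by the above, vanishes on $\cT_0^\infty$. Given an arbitrary $X$ with trace $g = X\cdot\nu$, approximate $g$ in $L^{2}(\Sigma)$ by functions $h_n \in C^\infty_0(\Sigma\setminus F)$ and then, exactly as in the proof of \Cref{lem:fsmooth}, add small bump functions on the nodal segments so that $\int_{\Sigma_i}\chi_i h_n\bigl(\tfrac{\p\psi_i}{\p\nu_i}\bigr)^{2}\,d\mu$ is independent of $i$; each corrected $h_n$ is then of the form $X_n\cdot\nu$ for some $X_n \in \cT_0^\infty$, so $d\lap(\id)(X_n) = 0$. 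Letting $n\to\infty$ and using that $\bigl(\tfrac{\p\psi_1}{\p\nu_1}\bigr)^{2}$ is bounded (it extends continuously to $\overline\Omega_1$ and vanishes at the corners, by \cite{Azzam1}), we obtain $d\lap(\id)(X) = 0$. Hence $\id$ is a critical point of $\lap$, so $P$ is a critical partition and \Cref{thm:critical,thm:Hess1} are available.

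\textbf{Nonnegativity of the Hessian along $\cT_0^\infty$.} Apply the transversality argument used to prove \Cref{thm:submanifold} to the restriction of $\Xi\colon\varphi\mapsto(\lambda_1(\varphi(\Omega_i)))_i$ to the diffeomorphisms in $\DsM$ fixing the interior corners: the only possible obstruction to transversality with the diagonal is a vector $c$ with $\sum_i c_i = 0$ and $c_i\bigl(\tfrac{\p\psi_i}{\p\nu_i}\bigr)^{2} = c_j\bigl(\tfrac{\p\psi_j}{\p\nu_j}\bigr)^{2}$ on each $\Sigma_i\cap\Sigma_j$, and \Cref{thm:critical} forces $c_i = \mu a_i^{2}$ there, whence $\mu = \sum_i c_i = 0$ and $c = 0$. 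Thus $\mathcal E_F := \{\varphi\in\EsM : \varphi\text{ fixes the interior corners}\}$ is a smooth submanifold near $\id$, with $\cT_0^\infty\subset T_{\id}\mathcal E_F$. For $X \in \cT_0^\infty$ choose a $C^{2}$ curve $\gamma_t$ in $\mathcal E_F$ with $\gamma_0 = \id$, $\gamma_0' = X$; then $\gamma_t(P)\in\cP_k(P)$ and $\gamma_t\in\EsM$, so $\lap(\gamma_t) = \Lambda(\gamma_t(P)) \geq \lambda_* = \lap(\id)$ for all small $t$, and since $\id$ is a critical point of $\lap$ we get $\Hess\lap(\id)(X,X) = \tfrac{d^{2}}{dt^{2}}\lap(\gamma_t)|_{t=0} \geq 0$.

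\textbf{Conclusion and the main obstacle.} In view of \Cref{thm:Hess1}, the previous step amounts to saying that $\form(h,h)\geq 0$ for every $h$ in $\{\rho(X\cdot\nu) : X\in\cT_0^\infty\}$, which is dense in $\dom(\form)$ by \Cref{thm:dense}. Since $\form$ is continuous on $\wtH(\Sigma)$ by \eqref{abound}, it follows that $\form(f,f)\geq 0$ for all $f\in\dom(\form)$, and therefore $\Hess\lap(\id)(X,X) = 2\form(\rho(X\cdot\nu),\rho(X\cdot\nu))\geq 0$ for every $X\in T_{\id}\EsM$, which completes the proof. The crux is that $\cP_k(P)$ contains no neighborhood of $P$, so minimality can only be probed by corner-fixing deformations, i.e. by vector fields in $\cT_0^\infty$, which form a much thinner set than $T_{\id}\EsM$ — every $\rho(X\cdot\nu)$ with $X\in\cT_0^\infty$ vanishes at the corners, unlike a generic element of $\dom(\form)$. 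Bridging this gap is exactly the role of \Cref{thm:dense}; I expect the two density passages (the criticality step and the final step), together with the corner-fixing submanifold construction, to be the technically delicate parts. The subtle feature behind \Cref{thm:dense} itself is that the approximation must take place in the $H^{1/2}$ norm, in which functions supported away from the corners are dense — something that fails in $H^{1/2+\epsilon}$.
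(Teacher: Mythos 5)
Your proposal is correct and follows essentially the same route as the paper: use \Cref{def:hom} to test minimality by corner-fixing deformations, deduce vanishing first variation and nonnegative second variation on $\cT_0^\infty$, and then extend to all of $T_{\id}\EsM$ via \Cref{thm:Hess1}, the density result \Cref{thm:dense}, and the continuity bound \eqref{abound}. The extra material you supply (the $L^2$ approximation with bump corrections for the criticality step, and the transversality construction of the corner-fixing submanifold $\mathcal E_F$ to produce admissible curves) simply fills in steps the paper's proof treats implicitly, and does so consistently with the paper's framework.
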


\begin{proof}
Since $P$ is minimizing in $\cP_k(P)$, it is minimizing with respect to all diffeomorphisms that are sufficiently close to the identity and fix all of the corner points of $P$, by \Cref{def:hom}. Recalling the definition of $\cT_0^\infty$ in \eqref{T0infty}, it follows that $D\lap(\id)(X) = 0$ for all $X \in \cT_0^\infty$. It follows from \Cref{thm:dense} that $D\lap(\id)(X) = 0$ for all $X \in T_{\id}\EsM$, thus $P$ is a critical partition.

It similarly follows that $\Hess \lap(\id)(X,X) \geq 0$ for all $X \in \cT_0^\infty$. For arbitrary $X \in T_{\id}\EsM$ we can use \Cref{thm:dense} to find a sequence $\{X_n\}$ in $\cT_0^\infty$ such that $\rho(X_n \cdot \nu) \to \rho(X\cdot\nu)$ in $\wtH(\Sigma)$. Since $\Hess \lap(\id)(X_n,X_n) \geq 0$ for each $n$ and $\form$ is continuous in $\wtH(\Sigma)$, we can apply \Cref{thm:Hess1} twice to obtain
\begin{align*}
	\Hess \lap(\id)(X,X) &= 2\form\big(\rho(X\cdot\nu), \rho(X \cdot\nu) \big) \\
	&= \lim_{n \to \infty} 2\form\big(\rho(X_n \cdot\nu), \rho(X_n \cdot\nu) \big) \\
	&= \lim_{n \to \infty} \Hess \lap(\id)(X_n,X_n) \\
	& \geq 0,
\end{align*}
which completes the proof.
\end{proof}

\section{Examples}
\label{sec:examples}

We conclude the paper by showing how our techniques can be used to study non-bipartite minimal partitions. Starting with nodal partitions of the rectangle and disk that are not Courant sharp, we use \Cref{thm:Hess1} to explicitly compute negative directions for the Hessian. Sketching the resulting deformations, then gives insight into the structure of nearby minimal (or locally minimal) partitions, which we then investigate numerically.

\subsection{The $(2,2)$ mode on the rectangle}

Consider the rectangle $(0,\alpha\pi) \times (0,\pi)$, which has eigenfunctions and eigenvalues
\begin{equation}
	\psi_{m,n}(x,y) = \sin\big(\tfrac{m x}{\alpha}\big) \sin(ny), \qquad \lambda_{m,n} = \big(\tfrac{m}{\alpha}\big)^2 + n^2,
	\qquad m,n \in \bbN.
\end{equation}
In this section we apply our machinery to the 4-partition generated by $\psi_{2,2}$. It is easily verified that $\psi_{2,2}$ is Courant sharp, and hence the resulting partition is minimal, if and only if $\frac35 \leq \alpha^2 \leq \frac53$.

For $\frac53 < \alpha^2 < 4$ we have
\[
	\lambda_{1,1} < \lambda_{2,1} < \lambda_{1,2} < \lambda_{3,1} < \lambda_{2,2} < \cdots
\]
so $\lambda_{2,2}$ is simple and is the fifth eigenvalue of the Laplacian. The corresponding nodal partition thus has deficiency $\delta(P) = 1$, so it follows from \Cref{thm:BCHS} that $n_-(\DtnNu) = 1$. For $\alpha$ in this range we will identify the eigenfunction corresponding to the negative eigenvalue of $\DtnNu$ and then use \Cref{thm:Hess1} to sketch the resulting energy-decreasing deformation. We use this sketch to conjecture a minimal 4-partition for the rectangle, which is then investigated numerically. A similar analysis was carried out in \cite{BCCM2} for the $(3,1)$ mode on the square; the present example is more involved on account of the intersecting nodal lines.

We choose the unit normal $\nu$ to point outwards along the bottom-left and top-right subdomains, as shown on the left of \Cref{fig:rectangle}. For this choice $\DtnNu$ coincides with the regular two-sided Dirichlet-to-Neumann map.

\begin{theorem}
For $\frac53 < \alpha^2 < 4$, the operator $\DtnNu$ has a single negative eigenvalue, and the corresponding eigenfunction  is given by the even reflection of
\begin{equation}
\label{22DtNeigenfunction}
	f(x,y) = \begin{cases} \sin(\gamma_1 x) \sin\big(\tfrac{\gamma_2 \pi}2 \big), & 0 \leq x \leq \tfrac{\alpha\pi}{2}, \ y = \tfrac{\pi}2, \\
	\sin\big( \tfrac{\gamma_1 \alpha\pi}2) \sin(\gamma_2 y \big), & x = \tfrac{\alpha\pi}{2}, \ 0 \leq y \leq \tfrac{\pi}2,
	\end{cases}
\end{equation}
about the lines $x = \tfrac{\alpha\pi}{2}$ and $y = \tfrac{\pi}2$. Here $\gamma_1 \in (3/\alpha, 4/\alpha)$ and $\gamma_2 \in (1,2)$ are numbers satisfying
\begin{equation}
\label{gamma12A}
	\gamma_1^2 + \gamma_2^2 = \lambda_{2,2}
\end{equation}
and
\begin{equation}
\label{gamma12B}
	\gamma_1 \cot \left( \frac{\gamma_1 \alpha\pi}2 \right) = \gamma_2 \cot\left( \frac{\gamma_2 \pi}2 \right).
\end{equation}
\end{theorem}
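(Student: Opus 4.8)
The plan is to reduce the statement to exhibiting a single negative eigenpair of $\DtnNu$, and then to construct that eigenpair explicitly by separation of variables on each of the four quarter-rectangles. Since $P$ is the nodal partition of $\psi_{2,2}$, it is a critical partition (\Cref{thm:critical}) with $\Lambda(P) = \lambda_{2,2} = 4/\alpha^2 + 4$, and its boundary set $\Sigma$ is the ``plus sign'' through the center $q = (\tfrac{\alpha\pi}{2},\tfrac\pi2)$, the only interior corner. For $\tfrac53 < \alpha^2 < 4$ the eigenvalue ordering quoted above shows $\lambda_{2,2}$ is the fifth Dirichlet eigenvalue, so $\ell(P) = 5$, $\delta(P) = 5-4 = 1$, and \Cref{thm:BCHS} gives $n_-(\DtnNu) = 1$. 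Hence $\DtnNu$ has exactly one negative eigenvalue and its eigenspace is one-dimensional; it therefore suffices to produce one $f\in\dom(\DtnNu)$ with $\DtnNu f = \mu f$ and $\mu < 0$, and any such $f$ must be \emph{the} eigenfunction.

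Next I would establish existence of $\gamma_1,\gamma_2$. On the arc $\gamma_1^2 + \gamma_2^2 = \lambda_{2,2}$ one has $\gamma_2\in(1,2)$ precisely when $\gamma_1 \in \big(\tfrac2\alpha,\sqrt{4/\alpha^2+3}\,\big)$, and the hypotheses $\tfrac53 < \alpha^2$ and $\alpha^2 < 4$ give $\tfrac3\alpha < \sqrt{4/\alpha^2+3} < \tfrac4\alpha$, so $I := \big(\tfrac3\alpha,\sqrt{4/\alpha^2+3}\,\big)$ is a nonempty subinterval of $(\tfrac3\alpha,\tfrac4\alpha)$. On $I$ put $\gamma_2 = \sqrt{\lambda_{2,2}-\gamma_1^2}$ and $h(\gamma_1) = \gamma_1\cot\!\big(\tfrac{\gamma_1\alpha\pi}{2}\big) - \gamma_2\cot\!\big(\tfrac{\gamma_2\pi}{2}\big)$; throughout $I$ we have $\tfrac{\gamma_1\alpha\pi}{2}\in(\tfrac{3\pi}2,2\pi)$ and $\tfrac{\gamma_2\pi}{2}\in(\tfrac\pi2,\pi)$, where both cotangents are negative and nonsingular. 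As $\gamma_1\to\tfrac3\alpha^+$ the first term tends to $0$ (since $\cot\tfrac{3\pi}2 = 0$) and the second to a negative limit, so $h>0$ near the left endpoint; as $\gamma_1\to\sqrt{4/\alpha^2+3}^-$ one has $\gamma_2\to1$, so the second term tends to $0$ and the first to a negative limit, giving $h<0$ near the right endpoint. The intermediate value theorem yields $\gamma_1\in I$, hence $\gamma_2\in(1,2)$, with \eqref{gamma12A} and \eqref{gamma12B}; uniqueness of the pair in these ranges is then automatic, since two distinct admissible pairs would give linearly independent negative eigenfunctions of $\DtnNu$, contradicting $n_-(\DtnNu) = 1$.

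Finally I would verify that the even reflection $f$ of \eqref{22DtNeigenfunction} is this eigenfunction. Label the quarters $R_1$ (bottom-left), $R_2$ (top-left), $R_3$ (bottom-right), $R_4$ (top-right); with the chosen $\nu$ the sign functions are $\chi_1 = \chi_4 = +1$, $\chi_2 = \chi_3 = -1$. By \eqref{gamma12A} the separated functions
\begin{align*}
	v_1 &= \sin(\gamma_1 x)\sin(\gamma_2 y), & v_2 &= -\sin(\gamma_1 x)\sin(\gamma_2(\pi-y)), \\
	v_3 &= -\sin(\gamma_1(\alpha\pi-x))\sin(\gamma_2 y), & v_4 &= \sin(\gamma_1(\alpha\pi-x))\sin(\gamma_2(\pi-y))
\end{align*}
solve $\Delta v_i + \lambda_{2,2}v_i = 0$ on $R_i$, vanish on $\p M\cap\p R_i$, and satisfy $v_i\big|_{\Sigma_i} = \chi_i f$. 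The genuine solution of \eqref{uiequation} is $u_i^f = v_i - c_i\psi_i$ with $c_i$ fixing $\int_{R_i}u_i^f\psi_i\,dV = 0$, and subtracting $c_i\psi_i$ only changes $\p_\nu u^f$ by $-c_i\chi_i(\p_{\nu_i}\psi_i)\mathbf{1}_{\Sigma_i}\in\Snu^\perp$, which $\Pi_{\Snu}$ annihilates. A direct computation of $\chi_i\p_{\nu_i}v_i + \chi_j\p_{\nu_j}v_j$ on each interface gives $2\gamma_2\cos(\tfrac{\gamma_2\pi}2)\sin(\gamma_1\,\cdot\,)$ on the two horizontal segments, where $f = \sin(\tfrac{\gamma_2\pi}2)\sin(\gamma_1\,\cdot\,)$, and $2\gamma_1\cos(\tfrac{\gamma_1\alpha\pi}2)\sin(\gamma_2\,\cdot\,)$ on the two vertical segments, where $f = \sin(\tfrac{\gamma_1\alpha\pi}2)\sin(\gamma_2\,\cdot\,)$; by \eqref{gamma12B} both ratios equal $\mu := 2\gamma_2\cot(\tfrac{\gamma_2\pi}2) = 2\gamma_1\cot(\tfrac{\gamma_1\alpha\pi}2)$, so $\p_\nu u^f = \mu f$ on all of $\Sigma$. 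It remains to check $f\in\dom(\form) = \wtH(\Sigma)\cap\Snu$: each $\chi_i f$ extended by zero to $\p R_i$ is continuous and piecewise smooth, hence in $H^{1/2}(\p R_i)$, so $f\in\wtH(\Sigma)$; and a short calculation gives $\int_{\Sigma_i}\chi_i f\,\p_{\nu_i}\psi_i$ proportional to $\big(\gamma_1^2 - 4/\alpha^2\big)^{-1} + \big(\gamma_2^2 - 4\big)^{-1}$, which vanishes because $(\gamma_1^2 - 4/\alpha^2) + (\gamma_2^2 - 4) = \lambda_{2,2} - \lambda_{2,2} = 0$ by \eqref{gamma12A}. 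Therefore $\DtnNu f = \Pi_{\Snu}(\p_\nu u^f) = \mu f$ with $\mu < 0$ (as $\tfrac{\gamma_2\pi}2\in(\tfrac\pi2,\pi)$), and by the dimension count $f$ is the unique negative eigenfunction.

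The main obstacle is bookkeeping rather than depth: one must keep the orientation data ($\chi_i$ and the reflections) consistent across the four quarters, and — more essentially — recognize that $f$ does \emph{not} vanish at the corner $q$, so that $u_i^f$ is not simply $v_i$ (the correction by $c_i\psi_i$ must be shown harmless) and membership $f\in\wtH(\Sigma)$ is genuinely used. The happy coincidence that makes the construction close up is that the single relation \eqref{gamma12A}, namely $\gamma_1^2+\gamma_2^2 = \lambda_{2,2} = 4/\alpha^2 + 4$, simultaneously forces $\Delta v_i + \lambda_{2,2}v_i = 0$ and $\int_{\Sigma_i}\chi_i f\,\p_{\nu_i}\psi_i = 0$.
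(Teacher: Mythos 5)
Your proposal is correct, and it reaches the same explicit separable eigenpair as the paper, but by a different mechanism at the key step. The paper establishes existence of $(\gamma_1,\gamma_2)$ (and identifies the eigenvalue) by invoking the spectral-flow result of \cite{BCM19}: there are monotone families $\gamma_1(\sigma),\gamma_2(\sigma)$ for which the reflected product of sines satisfies the two-sided Robin condition \eqref{22BVP} for every $\sigma\ge 0$, and an intermediate value argument in the parameter $\sigma$ selects the unique $\bar\sigma$ with $\gamma_1(\bar\sigma)^2+\gamma_2(\bar\sigma)^2=\lambda_{2,2}$; equation \eqref{gamma12B} then falls out of the normal-derivative computation. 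You instead run the intermediate value theorem directly along the circle \eqref{gamma12A} for the matching function $h(\gamma_1)=\gamma_1\cot(\gamma_1\alpha\pi/2)-\gamma_2\cot(\gamma_2\pi/2)$ (your endpoint sign analysis on $I=(3/\alpha,\sqrt{4/\alpha^2+3})$ is correct and uses exactly the hypotheses $\tfrac53<\alpha^2<4$), and then verify by hand, against the definition of $\DtnNu$ in \Cref{sec:DtN}, that the resulting $f$ is an eigenfunction: the four separated solutions $v_i$ with the signs $\chi_1=\chi_4=+1$, $\chi_2=\chi_3=-1$, the observation that the normalization correction $c_i\psi_i$ only perturbs $\p_\nu u^f$ inside $\Snu^\perp$, and the cancellation $(\gamma_1^2-4/\alpha^2)^{-1}+(\gamma_2^2-4)^{-1}=0$ giving $f\in\Snu$. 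This buys a self-contained, more elementary argument that does not lean on the external spectral-flow theorem (and is in fact more careful than the paper about the sign conventions for $\chi_i$, membership of $f$ in $\dom(\form)$, and the role of $\Pi_{\Snu}$), at the cost of losing the monotone-family structure that the spectral-flow picture provides; both arguments use the preamble count $\delta(P)=1$, hence $n_-(\DtnNu)=1$ via \Cref{thm:BCHS}, to conclude that the constructed pair is the unique negative eigenpair. One phrasing nitpick: after subtracting $c_i\psi_i$ you write ``$\p_\nu u^f=\mu f$ on all of $\Sigma$,'' whereas it is $\mu f$ only modulo the $\Snu^\perp$ terms you had just identified; the conclusion $\DtnNu f=\Pi_{\Snu}(\p_\nu u^f)=\mu f$ is unaffected since $f\in\Snu$.
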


\begin{figure}[!tbp]
	\begin{tikzpicture}[scale=0.7]
		\draw[thick, dashed] (0,0) -- (0,4) -- (6,4) -- (6,0) -- (0,0);
		\draw[very thick] (3,0) -- (3,4);
		\draw[very thick] (0,2) -- (6,2);
		\draw[->,very thick,blue] (3,1) -- (3.6,1);
		\draw[->,very thick,blue] (3,3) -- (2.4,3);
		\draw[->,very thick,blue] (1.5,2) -- (1.5,2.6);
		\draw[->,very thick,blue] (4.5,2) -- (4.5,1.4);
	\end{tikzpicture}
\hspace{1cm}
	\begin{tikzpicture}[scale=0.7]
		\draw[thick, dashed] (0,0) -- (0,4) -- (6,4) -- (6,0) -- (0,0);
		\draw[very thick] (3,0) to [out=120, in=240] (3,2);
		\draw[very thick] (3,2) to [out=60, in=300] (3,4);
		\draw[very thick] (0,2) to [out=45, in=225] (3,2);
		\draw[very thick] (3,2) to [out=45, in=225] (6,2);
	\end{tikzpicture}
\hspace{1cm}
	\begin{tikzpicture}[scale=0.7]
		\draw[thick, dashed] (0,0) -- (0,4) -- (6,4) -- (6,0) -- (0,0);
		\draw[very thick] (2.2,1.4) -- (3.8,2.6);
		\draw[very thick] (0,2) to [out=0, in=90] (3,0);
		\draw[very thick] (3,4) to [out=270, in=180] (6,2);
	\end{tikzpicture}
\caption{The $(2,2)$ partition of the rectangle (left), a qualitative illustration of an energy decreasing deformation (center), and the conjectured topology of a locally minimal 4-partition (right).}
\label{fig:rectangle}
\end{figure}
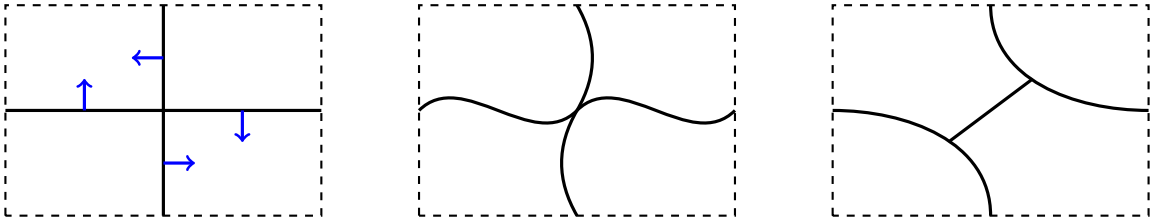

\begin{proof}
We determine the eigenfunction using the spectral flow method from \cite{BCM19}; see also \cite{beck2021limiting} for a detailed analysis of the one-dimensional case. It follows from \cite[Theorem~1]{BCM19} that there are smooth, strictly increasing functions $\gamma_1, \gamma_2 \colon [0,\infty) \to \bbR$ with
\[
	\gamma_1(0) = \frac3\alpha, \quad \gamma_1(\infty) = \frac4\alpha, \qquad \gamma_2(0) = 1, \quad \gamma_2(\infty) = 2
\]
such that the even reflection of
\begin{equation}
\label{usigma1}
	u_\sigma(x,y) = \sin\big(\gamma_1(\sigma) x)\big) \sin\big(\gamma_2(\sigma) y)\big)
\end{equation}
satisfies the two-sided boundary condition
\begin{equation}
	\label{22BVP}
	\frac{\p u_i}{\p \nu_i} + \frac{\p u_j}{\p \nu_j} + \sigma u = 0
\end{equation}
on $\Sigma$ for each $\sigma \geq 0$.

By assumption we have
\[
	\gamma_1(0)^2 + \gamma_2(0)^2 < \lambda_{2,2} < \gamma_1(\infty)^2 + \gamma_2(\infty)^2
\]
so there exists a unique $\bar\sigma \in (0,\infty)$ for which
\[
	\gamma_1(\bar\sigma)^2 + \gamma_2(\bar\sigma)^2 = \lambda_{2,2}.
\]
It follows from \eqref{22BVP} that the restriction of $u_{\bar\sigma}$ to the nodal set, which is precisely \eqref{22DtNeigenfunction}, is an eigenfunction of $\DtnNu$, with eigenvalue $-\bar\sigma$. Substituting \eqref{usigma1} into \eqref{22BVP} gives
\[
	2 \frac{\p u}{\p x}\left(\frac{\alpha\pi}2-, y\right) = 2 \gamma_1 \cos(\gamma_1 \alpha\pi/2) \sin(\gamma_2 y)
	= - \bar\sigma \sin(\gamma_1 \alpha\pi/2) \sin(\gamma_2 y), \qquad 0 \leq y \leq \tfrac{\pi}2,
\]
and
\[
	2 \frac{\p u}{\p y}\left(x, \frac\pi2 -\right) = 2 \gamma_2 \sin(\gamma_1 x) \cos(\gamma_2 \pi/2)
	= - \bar\sigma \sin(\gamma_1 x) \sin(\gamma_2 \pi/2), \qquad 0 \leq x \leq \tfrac{\alpha\pi}{2}.
\]
Solving both equations for $\bar\sigma$ and equating the resulting expressions gives \eqref{gamma12B}.
\end{proof}

We now sketch the resulting deformation for $\alpha = \frac32$. To obtain a valid deformation, we must approximate the eigenfunction $f$ in \eqref{22DtNeigenfunction} by a function $\tilde f$ that vanishes away from the corner points, then construct a vector field $X$ such that $\tilde f = \rho(X \cdot \nu)$, as in \Cref{thm:dense}. (This approximation is necessary\,---\,since $f$ does not vanish at the center of the rectangle, there is no vector field $X$ for which $f = \rho(X \cdot \nu)$.) However, we can understand the qualitative properties of the deformation, in particular, the direction in which it moves the nodal set, just by examining the sign of $f$.

For $\alpha = \frac32$ we have\footnote{
We can solve \eqref{gamma12A} and \eqref{gamma12B} numerically to find $\gamma_1 \approx 2.08$ and $\gamma_2 \approx 1.20$, but the exact values are not important. } $\gamma_1 \in (2, 8/3)$ and $\gamma_2 \in (1,2)$, therefore $\sin(\gamma_2 \pi/2) > 0$ and $\sin(\gamma_1 \alpha \pi/2) < 0$. It follows that $f(x,y)$ is negative for $x = \tfrac{\alpha\pi}{2}$ and $0 < y < \tfrac{\pi}2$. On the other hand, on the segment $0 < x < \tfrac{\alpha\pi}{2}, \ y = \tfrac{\pi}2$, $f(x,y)$ is positively proportional to $\sin(\gamma_1 x)$, and hence is positive near $x=0$ and negative near $x = \alpha \pi/2$, with a unique zero in between. Recalling the choice of $\nu$, we can sketch this deformation, as in the center of \Cref{fig:rectangle}.

This suggests that the energy will be decreased by pushing the nodal lines together in pairs, resulting in a non-bipartite partition with the topology shown on the right of \Cref{fig:rectangle}. Motivated by this, we look for critical partitions of this form.

\begin{figure}
	\begin{tikzpicture}[scale=0.9]
		\draw[thick] (0,0) -- (0,4) -- (6,4) -- (6,0) -- (0,0);
		\draw[very thick, dotted, red] (2.725,0) -- (2.725,1.775);
		\draw[thick] (2.725,1.775) -- (2.725,4);
		\draw[very thick, dotted, red] (3.275,2.225) -- (3.275,4);
		\draw[thick] (3.275,0) -- (3.275,2.225);
	\end{tikzpicture}
\hspace{1cm}
\includegraphics[scale=0.285]{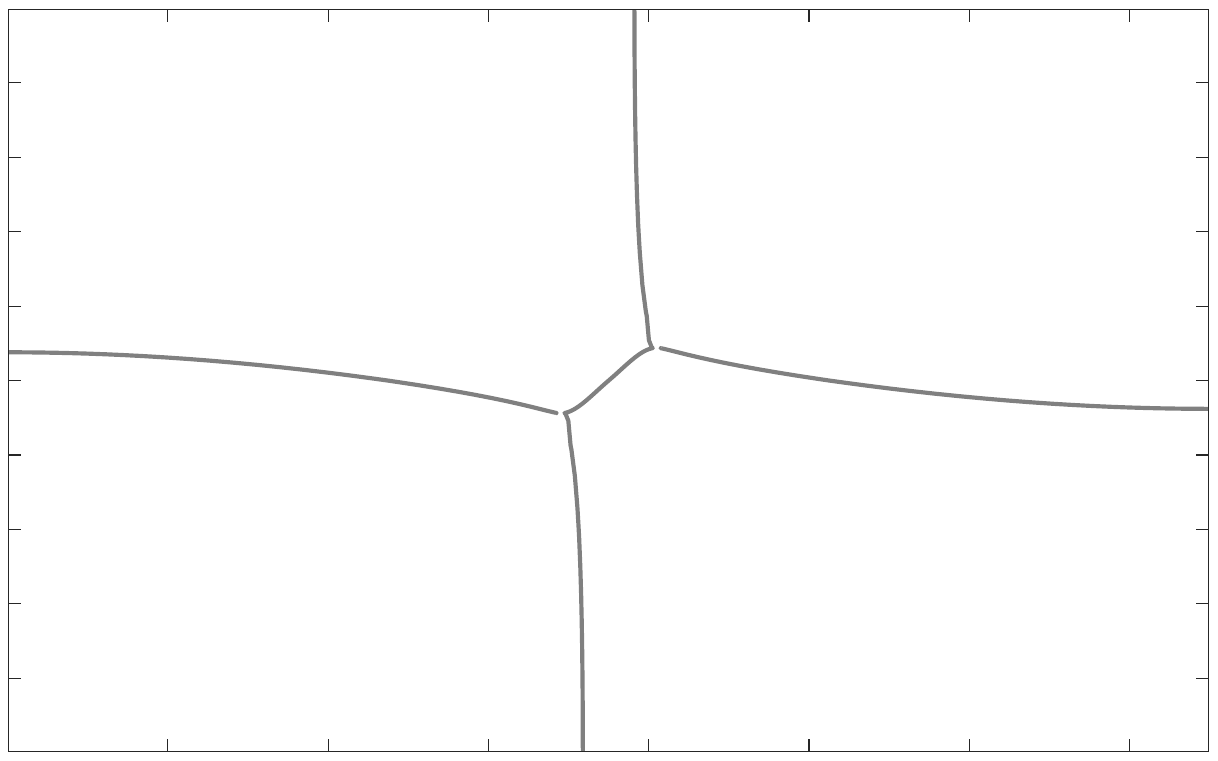}
\caption{A candidate locally minimal 4-partition of the rectangle with $\alpha = 3/2$ (right), corresponding to a Courant sharp eigenfunction of the operator with anti-continuity conditions imposed on the dotted red line segments (left).}
\label{fig:4partition}
\end{figure}

To do so, we consider the partition Laplacian with anti-continuity conditions on the two dotted red segments shown on the left of \Cref{fig:4partition}. For ease of computing we divide the domain into three subrectangles, imposing continuity conditions on the solid black vertical segments, which are not part of the cut set. Using separable coordinates, we discretize using Chebychev methods on the three rectangles. We then vary the endpoints of the dashed blue segments and examine the nodal partitions of the resulting eigenfunctions. For the cut shown in the figure, the fourth eigenfunction has nodal set shown on the right of \Cref{fig:4partition}.

\subsection{Radial partitions of the disk}

Consider the partition $P_k$ of the disk into $k$ equal sectors, as in \Cref{fig:disk}. This is the nodal partition of an eigenfunction for the corresponding partition Laplacian, with eigenvalue $\lambda_* = j_{k/2,1}^2$, where $j_{\alpha,n}$ denotes the $n$th positive zero of the Bessel function $J_\alpha$. The eigenfunction satisfies
\begin{equation}
	\psi(r,\theta)\big|_{\Omega_i} =  (-1)^{i+1} J_{k/2}\big(j_{k/2,1} r \big) \sin\left(\tfrac{k}2\right),
\end{equation}
where we have rotated the partition so that the segment $[0,1] \times \{0\}$ is contained in its nodal set. It follows from \Cref{thm:equality} that $P_k$ is a critical partition. However, it is only Courant sharp for $1 \leq k \leq 5$, thus for $k \geq 6$ the Hessian has nonzero Morse index, so $P_k$ is a saddle point of the partition energy functional, $\lap$.

Using \Cref{thm:Hess1}, we can construct a negative direction for $\Hess\lap$. We start with a convenient choice of unit normal $\nu$ along $\Sigma$. Labeling the subdomains $\Omega_1, \ldots, \Omega_k$ in the counterclockwise direction, we choose $\nu$ so that
\begin{equation}
\label{nu:disk}
	\nu\big|_{\Sigma_i} = (-1)^{i+1}\nu_i, \qquad 1 \leq i \leq k-1.
\end{equation}
This determines $\nu$. If $k$ is even, then $\nu\big|_{\Sigma_k} = - \nu_k$, while if $k$ is odd, then $\nu\big|_{\Sigma_k} $ will equal $-\nu_k$ on one segment of $\Sigma_k$ and $\nu_k$ on the other segment; see \Cref{fig:disk}.

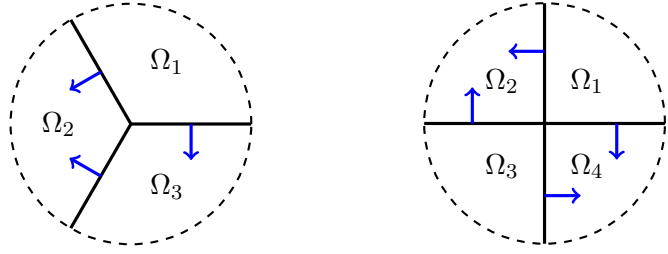
\begin{figure}
\begin{tikzpicture}[ scale=0.8]
	\draw[thick,dashed] (2,0) arc[radius=2, start angle=0, end angle=360];
	\draw[very thick] (0,0) -- (2,0);
	\draw[very thick] (0,0) -- ({2*cos(120)},{2*sin(120)});
	\draw[very thick] (0,0) -- ({2*cos(240)},{2*sin(240)});
	\node at ({1.2*cos(60)},{1.2*sin(60)}) {$\Omega_1$};
	\node at ({1.2*cos(180)},{1.2*sin(180)}) {$\Omega_2$};
	\node at ({1.2*cos(300)},{1.2*sin(300)}) {$\Omega_3$};
	\draw[->,very thick,blue] (1,0) -- ++(270:0.6); 
	\draw[->,very thick,blue] ({1*cos(120)},{1*sin(120)}) -- ++(210:0.6); 
	\draw[->,very thick,blue] ({1*cos(240)},{1*sin(240)}) -- ++(150:0.6); 
\end{tikzpicture}
\hspace{2cm}
\begin{tikzpicture}[ scale=0.8]
	\draw[thick,dashed] (2,0) arc[radius=2, start angle=0, end angle=360];
	\draw[very thick] (0,-2) -- (0,2);
	\draw[very thick] (-2,0) -- (2,0);
	\node at ({1.0*cos(45)},{1.0*sin(45)}) {$\Omega_1$};
	\node at ({1.0*cos(135)},{1.0*sin(135)}) {$\Omega_2$};
	\node at ({1.0*cos(225)},{1.0*sin(225)}) {$\Omega_3$};
	\node at ({1.0*cos(315)},{1.0*sin(315)}) {$\Omega_4$};
	\draw[->,very thick,blue] (0,1.2) -- ++(180:0.6); 
	\draw[->,very thick,blue] (-1.2,0) -- ++(90:0.6); 
	\draw[->,very thick,blue] (0,-1.2) -- ++(0:0.6); 
	\draw[->,very thick,blue] (1.2,0) -- ++(270:0.6); 
\end{tikzpicture}
\caption{The radial 3- and 4-partitions of the unit disk, with unit normal $\nu$ chosen as in \eqref{nu:disk}.}
\label{fig:disk}
\end{figure}

We next observe the following.

\begin{lemma}
\label{lem:alpha}
For each integer $k \geq 6$, there is a unique $\alpha \in (\frac12, \frac{k}{2})$ such that $j_{\alpha,2} = j_{k/2,1}$.
\end{lemma}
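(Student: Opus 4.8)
The plan is to regard $j_{\alpha,2}$ as a function of the order $\alpha$ on the interval $[\tfrac12,\tfrac k2]$ and to locate the desired solution by the intermediate value theorem. Two ingredients are needed: that $\alpha\mapsto j_{\alpha,n}$ is continuous and strictly increasing for each fixed $n\ge 1$, and explicit control of the two endpoint values.

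First I would record the monotonicity. Continuity of $\alpha\mapsto j_{\alpha,n}$ follows because the positive zeros of $J_\alpha$ are simple (if $J_\alpha$ and $J_\alpha'$ vanished simultaneously at some point, then $J_\alpha\equiv 0$ by uniqueness for the Bessel equation) and $J_\alpha(x)$ depends continuously on $(\alpha,x)$, so the implicit function theorem applies. Strict monotonicity is a Sturm comparison statement: the function $u_\alpha(x)=\sqrt{x}\,J_\alpha(x)$ has the same positive zeros as $J_\alpha$ and solves $u_\alpha''+q_\alpha u_\alpha=0$ with $q_\alpha(x)=1-(\alpha^2-\tfrac14)/x^2$, and since $q_\alpha(x)$ is strictly decreasing in $\alpha$ for every $x>0$, a larger order gives slower oscillation, pushing the zeros to the right. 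This is classical (see, e.g., Watson's treatise or the DLMF); for $n=1$ one can give a self-contained argument via the Wronskian $W=u_\alpha'u_\beta-u_\alpha u_\beta'$, whose derivative $(q_\beta-q_\alpha)u_\alpha u_\beta$ has a fixed sign on the relevant interval and whose behaviour as $x\to 0^+$ is governed by $u_\alpha(x)\sim c_\alpha x^{\alpha+1/2}$, and the same circle of ideas covers $n=2$.

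Next I would evaluate the endpoints. Since $J_{1/2}(x)=\sqrt{2/(\pi x)}\,\sin x$, its positive zeros are the positive multiples of $\pi$, so $j_{1/2,2}=2\pi$; by the monotonicity just recalled, $j_{k/2,1}\ge j_{3,1}$ for every integer $k\ge 6$, and $j_{3,1}>2\pi$ (a standard tabulated value, $j_{3,1}\approx 6.38$, or via any of the usual lower bounds for $j_{\nu,1}$). Hence at $\alpha=\tfrac12$ the quantity $j_{\alpha,2}=2\pi$ lies strictly below $j_{k/2,1}$, while at $\alpha=\tfrac k2$ we have $j_{k/2,2}>j_{k/2,1}$ trivially, because the second positive zero of any Bessel function exceeds the first. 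Thus $\alpha\mapsto j_{\alpha,2}-j_{k/2,1}$ is continuous, strictly increasing, negative at $\alpha=\tfrac12$ and positive at $\alpha=\tfrac k2$, so it has a unique zero in the open interval $(\tfrac12,\tfrac k2)$, which is the $\alpha$ we seek. The only point requiring any care — the \emph{hard part}, such as it is — is the monotonicity of Bessel zeros in the order together with the numerical comparison $2\pi<j_{3,1}$, both of which are entirely classical.
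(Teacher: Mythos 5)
Your proof is correct and follows essentially the same route as the paper: continuity and strict monotonicity of $\alpha\mapsto j_{\alpha,2}$ plus the intermediate value theorem, with the endpoint checks $j_{1/2,2}<j_{k/2,1}$ (reduced to $k=6$ by monotonicity of $j_{\nu,1}$ in $\nu$) and $j_{k/2,1}<j_{k/2,2}$. The only difference is that you spell out the classical ingredients (the Sturm comparison for monotonicity and the explicit value $j_{1/2,2}=2\pi$ versus $j_{3,1}\approx 6.38$), which the paper simply cites as known.
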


\begin{proof}
Since $\alpha \mapsto j_{\alpha,2}$ is a continuous, increasing function, we need $j_{1/2,2} < j_{k/2,1} < j_{k/2,2}$. The second inequality holds for all $k$, and the first inequality, $j_{1/2,2} < j_{k/2,1}$, holds for $k=6$ and hence for all $k \geq 6$ by monotonicity.
\end{proof}

We can now construct negative directions for the two-sided Dirichlet-to-Neumann map.

\begin{theorem}
\label{thm:disk}
Consider the radial $k$-partition for some $k \geq 6$ and choose $\alpha$ as in \Cref{lem:alpha}. For the unit normal $\nu$ defined in \eqref{nu:disk}, the function 
\begin{equation}
\label{diskef}
	f(r,\theta) = \begin{cases} J_{\alpha}\big(j_{k/2,1} r \big), & k \text{ even,} \\
		J_{\alpha}\big(j_{k/2,1} r \big)\sin\left(\tfrac{\theta}2\right), & k \text{ odd,}
	\end{cases}
\end{equation}
is contained in $\dom(\form)$ and satisfies $\form(f,f) < 0$.
\end{theorem}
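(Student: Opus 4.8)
The plan is to follow the approach of the rectangle example in this section: exhibit an explicit ``trial function'' $u$ on $M$ whose traces on the nodal rays realize (a multiple of) the function $f$ from \eqref{diskef}, and then read off $\form(f,f)$ from the Dirichlet energy of $u$. Write $\lambda_* = \Lambda(P_k) = j_{k/2,1}^2$ and pick $\alpha \in (\tfrac12,\tfrac k2)$ as in \Cref{lem:alpha}, so that $J_\alpha(j_{k/2,1}) = 0$; label the rays by their angles $\theta_\ell = 2\pi\ell/k$, $\ell = 0,\dots,k-1$, with $\Omega_i$ the sector between $\theta_{i-1}$ and $\theta_i$. On each $\Omega_i$ I set $u_i(r,\theta) = J_\alpha(j_{k/2,1}\,r)\,g_i(\theta)$ with $g_i \in \spn\{\cos\alpha\theta,\sin\alpha\theta\}$ chosen so that the Dirichlet data of $u_i$ on the two rays bounding $\Omega_i$ matches $\chi_i f$; this is possible and unique because $\sin\big(\alpha(\theta_i - \theta_{i-1})\big) = \sin(2\alpha\pi/k)$ is nonzero (as $0 < 2\alpha\pi/k < \pi$). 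Automatically $\Delta u_i + \lambda_* u_i = 0$ on $\Omega_i$, since $J_\alpha(j_{k/2,1}\,r)e^{\pm i\alpha\theta}$ is an eigenfunction of $-\Delta$ with eigenvalue $j_{k/2,1}^2$, and $u_i$ vanishes on the circular part of $\p\Omega_i$ (this is where $J_\alpha(j_{k/2,1}) = 0$ enters). For $k$ even this recovers the symmetric formula $u_i = (-1)^{i+1}\cos(\alpha\pi/k)^{-1}J_\alpha(j_{k/2,1}\,r)\cos(\alpha(\theta - \phi_i))$ with $\phi_i$ the angular center of $\Omega_i$; for $k$ odd the ray-data $\sin(\pi\ell/k)$ is consistent across the rays precisely because it vanishes on the ray $\theta = 0$ and satisfies $\sin(\pi(\ell-1)/k) + \sin(\pi(\ell+1)/k) = 2\cos(\pi/k)\sin(\pi\ell/k)$, which is the odd analogue of the ``even reflection'' used in the rectangle.

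Next I would check that $f \in \dom(\form) = \wtH(\Sigma) \cap \Snu$. Near the origin $u_i \sim r^\alpha$, so $u_i \in H^1(\Omega_i)$; its trace on $\p\Omega_i$ is exactly the extension by zero of $\chi_i f|_{\Sigma_i}$, which is therefore in $H^{1/2}(\p\Omega_i)$, giving $f \in \wtH(\Sigma)$. And the mere existence of a solution $u_i$ of \eqref{uiequation} with Dirichlet data $\chi_i f$ forces the solvability condition $\int_{\Sigma_i}\chi_i f\,\tfrac{\p\psi_i}{\p\nu_i}\,d\mu = 0$ (integrate $u_i\Delta\psi_i - \psi_i\Delta u_i$ over $\Omega_i$ and use $\psi_i|_{\p\Omega_i} = 0$), so $f \in \Snu$. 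To evaluate $\form(f,f)$ I note that the canonical solution $u_i^f$ in \eqref{uiequation} differs from $u_i$ by a multiple of $\psi_i$ (the eigenspace of the simple lowest Dirichlet eigenvalue $\lambda_*$ on $\Omega_i$); since $\psi_i$ vanishes on $\p\Omega_i$ and $\int_{\Sigma_i}\chi_i f\,\p_{\nu_i}\psi_i = 0$, this does not change $\int_{\Omega_i}\big(|\nabla\cdot|^2 - \lambda_*|\cdot|^2\big)$. Integrating by parts on $\Omega_i \cap \{r > \varepsilon\}$ and letting $\varepsilon \to 0$ (the circular error term is $O(\varepsilon^{2\alpha}) \to 0$), then using $\Delta u_i + \lambda_* u_i = 0$ and $u_i = 0$ on the arc, gives $\int_{\Omega_i}(|\nabla u_i|^2 - \lambda_* u_i^2) = \int_{\Sigma_i}\chi_i f\,\p_{\nu_i}u_i\,d\mu$; summing over $i$ and regrouping by rays yields $\form(f,f) = \int_\Sigma f\,\p_\nu u\,d\mu$ with $\p_\nu u|_{\Sigma_i \cap \Sigma_j} = \chi_i\p_{\nu_i}u_i + \chi_j\p_{\nu_j}u_j$.

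It then remains to compute $\p_\nu u$ along a ray and check its sign. Using the explicit interpolants, $\p_{\nu_i}u_i = \pm r^{-1}J_\alpha(j_{k/2,1}\,r)\,g_i'$ evaluated at the ray, and after inserting the formulas for $g_i'$ at the two endpoints and using the recursion above, the two contributions combine to
\[
	\p_\nu u = \frac{2\alpha\big(\cos(2\alpha\pi/k) - \kappa_0\big)}{r\,\sin(2\alpha\pi/k)}\,f \ \text{ on }\Sigma, \qquad \kappa_0 = \begin{cases} 1, & k \text{ even},\\ \cos(\pi/k), & k \text{ odd},\end{cases}
\]
so that, writing $c_\ell$ for the ray-values of $f$ ($c_\ell = 1$ for $k$ even, $c_\ell = \sin(\pi\ell/k)$ for $k$ odd),
\[
	\form(f,f) = \frac{2\alpha\big(\cos(2\alpha\pi/k) - \kappa_0\big)}{\sin(2\alpha\pi/k)}\,\Big(\sum_\ell c_\ell^2\Big)\int_0^1 \frac{J_\alpha(j_{k/2,1}\,r)^2}{r}\,dr.
\]
The radial integral is finite (the integrand is $\sim r^{2\alpha-1}$ near $0$) and positive, $\sum_\ell c_\ell^2 > 0$, $\sin(2\alpha\pi/k) > 0$, and $\cos(2\alpha\pi/k) < \kappa_0$: for $k$ odd because $\tfrac\pi k < \tfrac{2\alpha\pi}{k} < \pi$ (this is exactly where $\alpha > \tfrac12$ is used), and for $k$ even because $\cos(2\alpha\pi/k) < 1$ for $\alpha > 0$. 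Hence $\form(f,f) < 0$, and in particular $n_-(\Lambda_{\scriptscriptstyle P,\nu}) \geq 1$.

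I expect the main obstacle to be the odd case of the last step. The angular interpolants $g_i$ are not all related by the dihedral symmetry of $P_k$ — the ray-data $\sin(\pi\ell/k)$ breaks it — so one must carry out the interpolation sector by sector (with care at the two sectors adjacent to the ray $\theta = 0$, where $f$ vanishes and $\chi$ changes sign along $\Sigma_k$) and then observe that the weighted sum of the boundary jump terms collapses, via $\sin(\pi(\ell-1)/k) + \sin(\pi(\ell+1)/k) = 2\cos(\pi/k)\sin(\pi\ell/k)$, into a manifestly signed multiple of $\sum_\ell c_\ell^2$ whose sign is governed by the threshold $\alpha = \tfrac12$ supplied by \Cref{lem:alpha}. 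A secondary technical point is the justification of the integration by parts up to the corner at the origin, where $u$ has only $H^1$ regularity; this is handled by the $O(\varepsilon^{2\alpha})$ bound on the cut-out circle.
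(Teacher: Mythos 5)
Your argument is correct, and in the even case it coincides with the paper's proof (the same interpolant $u_1 = J_\alpha(j_{k/2,1}r)\cos(\alpha(\theta-\pi/k))/\cos(\alpha\pi/k)$ and the same boundary-term computation). In the odd case, however, you take a genuinely different and more self-contained route: the paper produces the angular profile by invoking the spectral flow result of \cite{BCM19} together with \cite[Corollary~1.1]{beck2021limiting}, which supply a piecewise solution $T$ of $-T''=\alpha^2 T$ with Dirichlet conditions at $0,2\pi$, a \emph{uniform} jump parameter $\sigma>0$ at the interior rays, and node values $T(\theta_\ell)=\sin(\theta_\ell/2)$; the negativity of $\form(f,f)$ then comes from $\sigma>0$. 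You instead build the angular interpolant sector by sector (possible since $0<2\alpha\pi/k<\pi$) and obtain the uniform proportionality of the derivative jumps to the node values directly from the identity $\sin(\pi(\ell-1)/k)+\sin(\pi(\ell+1)/k)=2\cos(\pi/k)\sin(\pi\ell/k)$; this replaces the spectral-flow citations by an elementary computation, makes the sign condition transparent (it is exactly $\cos(2\alpha\pi/k)<\cos(\pi/k)$, i.e.\ $\alpha>\tfrac12$ from \Cref{lem:alpha}, with $\alpha<\tfrac k2$ giving $\sin(2\alpha\pi/k)>0$), and even yields the explicit value $\sigma=2\alpha\big(\cos(\pi/k)-\cos(2\alpha\pi/k)\big)/\sin(2\alpha\pi/k)$, which the paper leaves implicit. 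Your treatment of $f\in\dom(\form)$ (trace of the $H^1$ interpolant for the $\wtH(\Sigma)$ part, Green's identity for membership in $\Snu$) and of the irrelevance of the normalization $\int_{\Omega_i}u_i\psi_i\,dV=0$ is also correct and in fact spells out points the paper passes over silently. One small imprecision: in the odd case your displayed proportionality $\p_\nu u=\mathrm{const}\cdot f$ fails on the ray $\theta=0$, where $f$ vanishes but the two-sided derivative of the interpolant does not (it equals $-2\alpha\sin(\pi/k)\,r^{-1}J_\alpha(j_{k/2,1}r)/\sin(2\alpha\pi/k)$); since that ray contributes nothing to $\int_\Sigma f\,\p_\nu u\,d\mu$, your final formula and conclusion are unaffected, but the statement should be restricted to the rays $\theta_\ell$ with $1\le\ell\le k-1$, exactly as the paper does when it treats the sectors adjacent to $\theta=0$ separately.
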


In the even case $f$ is the same on every segment of $\Sigma$. This is not the case for odd $k$, though $f$ does have the same sign on every segment except $\theta=0$, where it vanishes. 
In either case, the direction of the resulting deformation depends on the sign of $f$ and the direction of $\nu$. This will be illustrated for $k=6$ after the proof.

\begin{proof}
We start with the bipartite case, where $k$ is even. Here we have $\chi_i \equiv (-1)^{i+1}$ for $1 \leq i \leq k$. To evaluate the bilinear form $\form$ in \eqref{aformdef} we thus need to solve the boundary value problem
\begin{equation}
\label{uiequationD}
	\Delta u_i + \lambda_* u_i = 0, \qquad u_i\big|_{\Sigma_i} = (-1)^{i+1} f, \qquad u_i\big|_{\pO_i \setminus \Sigma_i} = 0
\end{equation}
on each $\Omega_i$, where $\lambda_* = (j_{k/2,1})^2$. On $\Omega_1 = \{(r,\theta) : r < 1, \ 0 < \theta <  \frac{2\pi}{k} \}$ a solution is given by
\begin{equation}
	u_1(r,\theta) = J_\alpha\big(j_{k/2,1} r\big) \frac{\cos\big(\alpha(\theta - \pi/k)\big)}{\cos(\alpha\pi/k)}.
\end{equation}
A routine calculation shows this satisfies the equation $\Delta u_1 + \lambda_* u_1 = 0$. It has boundary values $u_1(r,0) = u_1(r,\frac{2\pi}k) = f(r)$ on $\Sigma_1$, and it vanishes on $\pO_1 \backslash \Sigma_1$ because $J_\alpha(j_{k/2,1}) = J_\alpha(j_{\alpha,2}) = 0$.

We then calculate the outward normal derivatives
\[
	\frac{\p u_1}{\p \nu_1}\left(r, 0\right) = \frac{\p u}{\p \nu_1}\left(r, \frac{2\pi}{k}\right) 
	= - \frac{\alpha}{r} J_\alpha\big(j_{k/2,1} r\big) \tan(\alpha\pi/k),
\]
which we use to obtain
\begin{equation}
\label{a1}
	\int_{\Omega_1} \big( |\nabla u_1|^2 - \lambda_* u_1^2\big)\,dV = \int_{\Sigma_1} u_1 \frac{\p u_1}{\p \nu_1}\,d\mu 
	= - 2\alpha \tan(\alpha\pi/k) \int_0^1 r^{-1} \big(J_\alpha(j_{k/2,1} r)\big)^2\,dr.
\end{equation}
For each $i$ the solution to \eqref{uiequationD} is the same as $u_1$, up to a sign and rotation. More precisely, $u_i(r,\theta) = (-1)^{i+1} u\big(r,\theta + 2\pi (i-1)/k\big)$. As a result, each $u_i$ has the same contribution to the sum in \eqref{aformdef}, namely \eqref{a1}, so we obtain
\begin{equation}
	\form(f,f) = - 2k \alpha \tan(\alpha\pi/k) \int_0^1 r^{-1} \big(J_\alpha(j_{k/2,1} r)\big)^2\,dr,
\end{equation}
which is negative because $\alpha < \frac{k}{2}$.

The construction in the odd case is slightly more involved, due to the angular dependence of $f$ in \eqref{diskef}. Here we make use of the spectral flow argument in \cite{BCM19}. This guarantees the existence of a unique $\sigma>0$ for which the problem $-T''(\theta) = \alpha^2 T(\theta)$ has a nontrivial solution on the interval $(0, 2\pi)$, subject to Dirichlet boundary conditions at the endpoints and jump conditions
\begin{equation}
\label{Tjump}
	T\left( \theta_i^-\right) = T\left( \theta_i^+\right), \qquad T'\left( \theta_i^+\right) - T'\left( \theta_i^-\right) = \sigma T\left( \theta_i\right)
\end{equation}
at the interior points $\theta_i = 2\pi i/k$ for $1 \leq i \leq k-1$. By \cite[Corollary 1.1]{beck2021limiting}, the solution will satisfy $T(\theta_i) = C \sin(\theta_i/2)$ for some constant $C$, so we can rescale to obtain $T(\theta_i) = \sin(\theta_i/2)$.

It follows that
\begin{equation}
	u_i(r,\theta) = (-1)^{i+1} J_\alpha\big(j_{k/2,1} r\big) T(\theta)
\end{equation}
satisfies the differential equation $\Delta u_i + \lambda_* u_i = 0$ on $\Omega_i$, with $u_i(1,\theta) = 0$ on the outer boundary. On $\Sigma_i$, where $\theta \in \{ \theta_{i-1},\theta_i\}$, we have
\[
	u_i(r,\theta) = (-1)^{i+1} J_\alpha\big(j_{k/2,1} r\big) \sin(\theta/2) = (-1)^{i+1} f(r,\theta),
\]
thus $u\big|_{\Sigma_i} = \chi_i f$. (From the choice of $\nu$ in \eqref{nu:disk} we have $\chi_i \equiv (-1)^{i+1}$ for $1 \leq i \leq k-1$; on the $k$th subdomain we have $\chi_k\big|_{\theta=0} = -1$ but this is irrelevant because $f$ vanishes when $\theta=0$.)

Calculating as in \eqref{a1}, we find 
\begin{equation}
\label{ai}
	\int_{\Omega_i} \big( |\nabla u_i|^2 - \lambda_* u_i^2\big)\,dV 
	= \Big\{ T(\theta_i) T'(\theta_i^-) - T(\theta_{i-1}) T'(\theta_{i-1}^+) \Big\}  \int_0^1 r^{-1} \big(J_\alpha(j_{k/2,1} r)\big)^2\,dr
\end{equation}
for $2 \leq i \leq k-1$. Recalling that $f$ vanishes when $\theta=0$, for $i=1$ and $i=k$ we calculate
\begin{equation}\label{ai1}
	\int_{\Omega_1} \big( |\nabla u_1|^2 - \lambda_* u_1^2\big)\,dV 
	= T(\theta_1) T'(\theta_1^-) \int_0^1 r^{-1} \big(J_\alpha(j_{k/2,1} r)\big)^2\,dr
\end{equation}
and
\begin{equation}
\label{ak}
	\int_{\Omega_k} \big( |\nabla u_k|^2 - \lambda_* u_k^2\big)\,dV 
	= - T(\theta_{k-1}) T'(\theta_{k-1}^+) \int_0^1 r^{-1} \big(J_\alpha(j_{k/2,1} r)\big)^2\,dr.
\end{equation}
Summing these and using the jump condition \eqref{Tjump}, we find that
\begin{align*}
	\sum_{i=1}^k \int_{\Omega_i} \big( |\nabla u_i|^2 - \lambda_* u_i^2\big)\,dV &=
	\left\{ \sum_{i=1}^{k-1} T(\theta_i) \big[ T'(\theta_i^-) - T'(\theta_i^+)\big] \right\} \int_0^1 r^{-1} \big(J_\alpha(j_{k/2,1} r)\big)^2\,dr \\
	&= - \sigma \left\{ \sum_{i=1}^{k-1} T(\theta_i)^2 \right\} \int_0^1 r^{-1} \big(J_\alpha(j_{k/2,1} r)\big)^2\,dr
\end{align*}
is negative, which completes the proof.
\end{proof}

We now show how the theorem can be used to study 6-partitions of the disk. In this case we have $f(r) = J_{\alpha}\big(j_{3,1} r \big)$, where $\alpha \approx 0.5657$ is the unique solution to $j_{\alpha,2} = j_{3,1}$. To get a qualitative picture of the deformation, it only matters that $f$ is positive near $r=0$ and negative near $r=1$, with a single zero in between. Recalling the choice of $\nu$, we see that the deformation will act on the nodal segments in pairs, with each being the mirror image of its neighbor, as illustrated in \Cref{disk:def}.

\begin{figure}
\begin{tikzpicture}[ scale=0.8]
	\draw[thick,dashed] (2,0) arc[radius=2, start angle=0, end angle=360];
	\draw[very thick] (0,-2) -- (0,2);
	\draw[very thick] ({-2*cos(210)},{-2*sin(210)}) -- ({2*cos(210)},{2*sin(210)});
	\draw[very thick] ({-2*cos(330)},{-2*sin(330)}) -- ({2*cos(330)},{2*sin(330)});
\end{tikzpicture}
\hspace{1cm}
\begin{tikzpicture}[ scale=0.8]
	\draw[thick,dashed] (2,0) arc[radius=2, start angle=0, end angle=360];
	\draw[very thick] (0,0) to [out=10, in=180] ({2*0.866},{2*0.5});
	\draw[very thick] (0,0) to [out=110, in=300] ({0},{2});
	\draw[very thick] (0,0) to [out=130, in=300] ({-2*0.866},{2*0.5});
	\draw[very thick] (0,0) to [out=230, in=60] ({-2*0.866},{-2*0.5});
	\draw[very thick] (0,0) to [out=250, in=60] ({0},{-2});
	\draw[very thick] (0,0) to [out=-10, in=180] ({2*0.866},{-2*0.5});
\end{tikzpicture}
\hspace{1cm}
\begin{tikzpicture}[ scale=0.8]
	\draw[thick,dashed] (2,0) arc[radius=2, start angle=0, end angle=360];
	\draw[very thick] (0,0) -- (0.2,0);
	\draw[very thick] (0.2,0) to [out=60, in=210] ({2*0.866},{2*0.5});
	\draw[very thick] (0.2,0) to [out=-60, in=-210] ({2*0.866},{-2*0.5});
	\draw[very thick] (0,0) -- ({0.2*cos(120)},{0.2*sin(120)});
	\draw[very thick] ({-0.2*0.5},{0.2*0.866}) to [out=60, in=270] (0,2);
	\draw[very thick] ({-0.2*0.5},{0.2*0.866}) to [out=180, in=330] ({-2*0.866},{2*0.5});
	\draw[very thick] (0,0) -- ({0.2*cos(240)},{0.2*sin(240)});
	\draw[very thick] ({-0.2*0.5},{-0.2*0.866}) to [out=180, in=30] ({-2*0.866},{-2*0.5});
	\draw[very thick] ({-0.2*0.5},{-0.2*0.866}) to [out=300, in=90] (0,-2);
\end{tikzpicture}
\caption{The 6-partition of the disk (left), a qualitative illustration of an energy decreasing deformation (center), and the conjectured topology of a locally minimal 6-partition (right).}
\label{disk:def}

\end{figure}

This suggests that we look for a minimal partition with the topology shown on the right of \Cref{disk:def}, so we seek an eigenfunction of a partition Laplacian whose nodal set has this structure. To do this, we fix a number $a \in (0,1)$ and consider the Laplacian on the sector $\{(r,\theta) : r < 1, \ 0 < \theta <  \frac{2\pi}{k} \}$ with Dirichlet conditions on the outer boundary and the segment $(0,a) \times \{0\}$ of the $x$-axis, and Neumann conditions on the remainder of the $x$-axis and on the segment $\theta = \pi/3$; see \Cref{fig:diskDN}. We then vary $a$ until we find an eigenfunction whose nodal set intersects the point $(a,0)$. Numerically we find that this happens for $a \approx 0.1$.  

Extending this eigenfunction to the rest of the disk by taking even reflections, we obtain an eigenfunction for the partition Laplacian whose nodal set has the desired topology, as shown in \Cref{fig:disk_cut}. Analyzing this partition Laplacian, as was done for the rectangle in  \Cref{fig:4partition}, one can verify numerically that this corresponds to the sixth eigenvalue for the partition Laplacian, so it is Courant sharp and hence a candidate for a minimal partition. Its energy is $\approx 40.7062$, whereas the radial 6-partition has $\Lambda(P_6) = (j_{3,1})^2 \approx 40.7065$. However, \cite[Figure~12]{Bogosel} shows an alternate candidate for the minimal 6-partition, which is found numerically to have energy $\approx 39.02$, suggesting that the partition we have found minimizes the energy locally but not globally.

\begin{figure}
\includegraphics[scale=0.25]{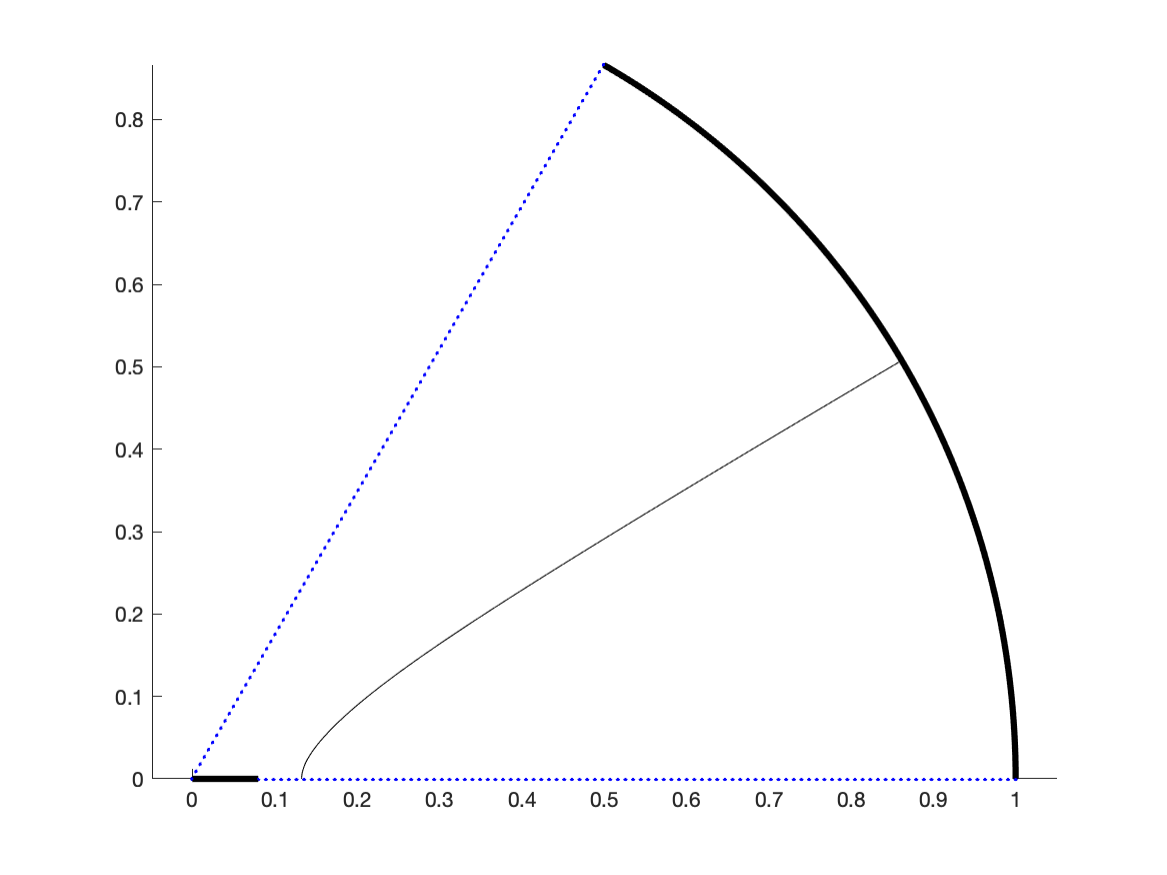}
\includegraphics[scale=0.25]{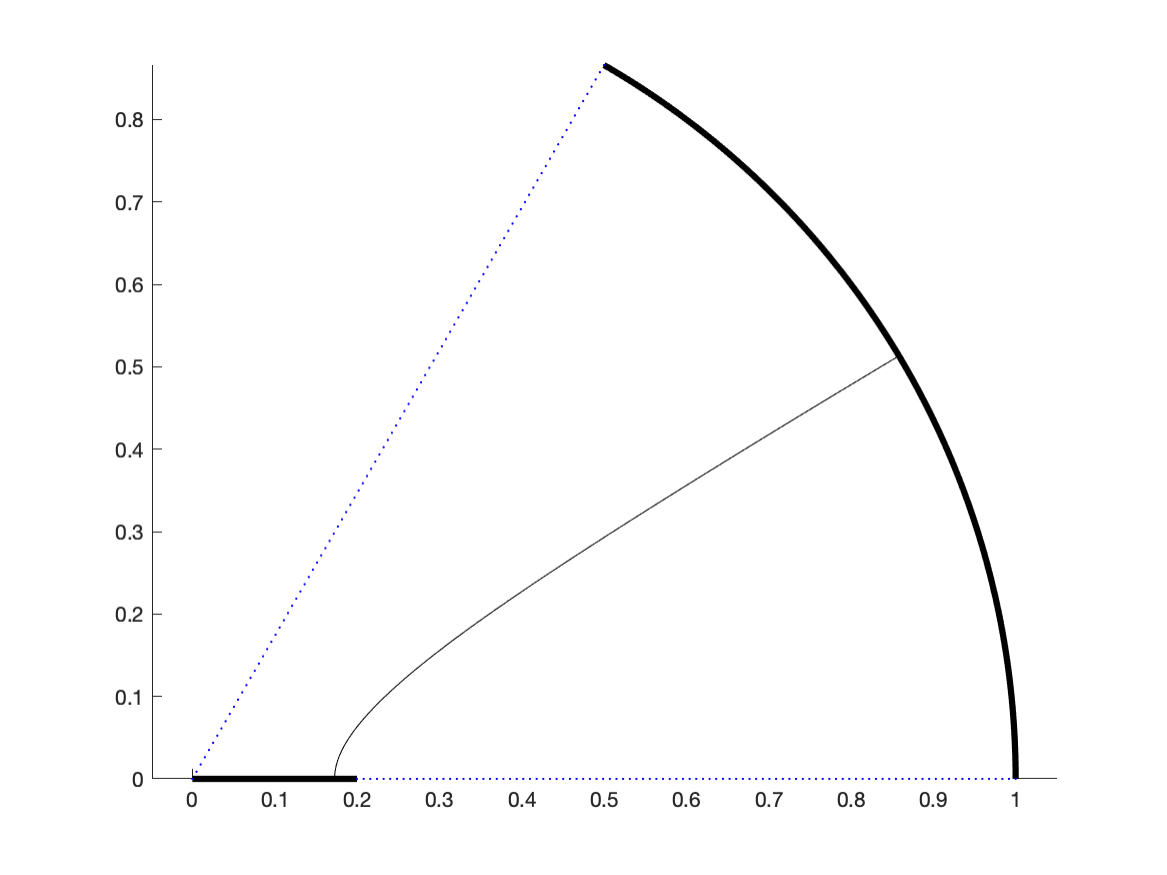} 
\includegraphics[scale=0.25]{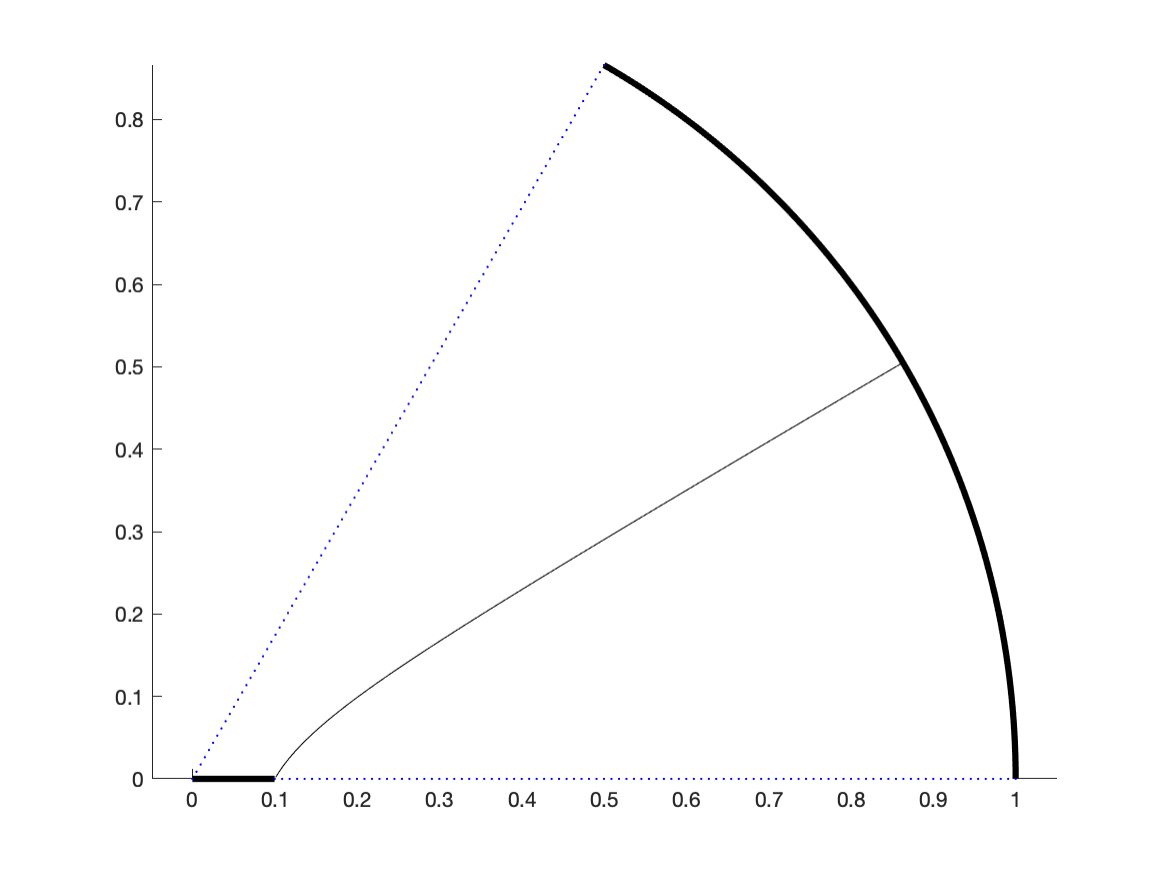}
\caption{A mixed boundary value problem on a sector of angle $\pi/3$, with Dirichlet conditions on the solid black line and Neumann conditions on the dashed blue line. The transition point (on the $x$-axis) is varied until it intersects the nodal line.}
\label{fig:diskDN}
\end{figure}

\begin{figure}
	\begin{tikzpicture}[baseline={(0,-2.25)},rotate=30]
		\draw[thick] (0,0) circle [radius=2];
		\draw[very thick, red, dotted] (0,0) -- (0,.2);
		\draw[thick] (0,.2) -- (0,2);
		
		\draw[very thick, red, dotted] (0,0) -- (-.0866*2,-.05*2);
		\draw[thick] (-.0866*2,-.05*2) -- (-.866*2,-.5*2);

		\draw[very thick, red, dotted] (0,0) -- (.0866*2,-.05*2);
		\draw[thick] (.0866*2,-.05*2) -- (.866*2,-.5*2);
	\end{tikzpicture}
\hspace{1cm} 
\includegraphics[scale=0.32]{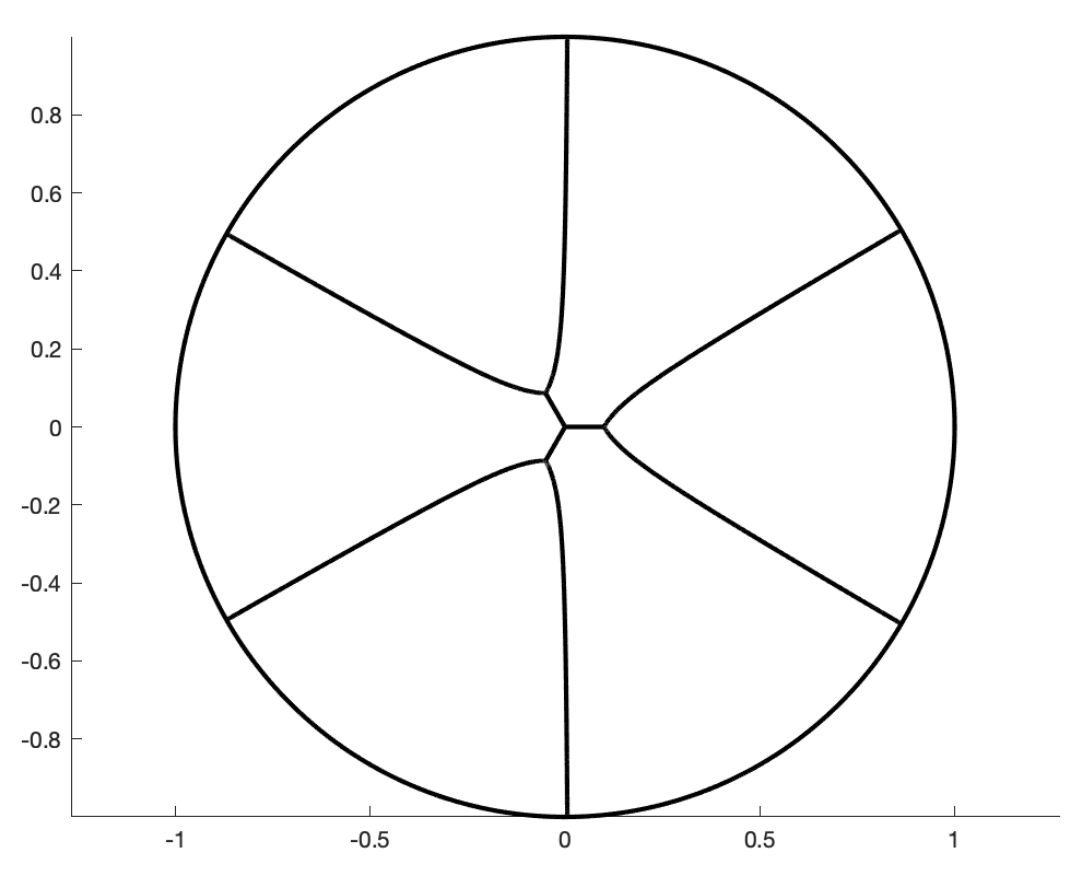}
\caption{A candidate locally minimal 6-partition of the disc (right), corresponding to a Courant-sharp eigenfunction of the operator with anti-continuity conditions imposed on the dotted red line segments (left).}
\label{fig:disk_cut}
\end{figure}

\subsection*{Acknowledgments}
G.B. acknowledges the support of NSF Grant DMS-2247473.  Y.C. was supported by NSF CAREER Grant DMS-2045494. G.C. acknowledges the support of NSERC grant RGPIN-2017-04259. P.K. acknowledges the support of NSF Grant DMS-2007408. J.L.M. acknowledges support from the NSF through  NSF Applied Math Grant DMS-2307384 and NSF FRG grant DMS-2152289.  The authors are grateful to the AIM SQuaRE program for hosting them and supporting the initiation of this project.

\bibliographystyle{plain}
\bibliography{nodal}

\end{document}